\newtheorem{theorem}{Theorem}    
\numberwithin{theorem}{section}       
\newtheorem{lemma}[theorem]{Lemma}               
\newtheorem{corollary}[theorem]{Corollary}
\newtheorem{proposition}[theorem]{Proposition}
\theoremstyle{definition}
\newtheorem{definition}[theorem]{Definition}
\newtheorem{assumption}[theorem]{Assumption}
\newtheorem{example}[theorem]{Example}
\newtheorem{remark}[theorem]{Remark}
\DeclareMathOperator\supp{supp}
\DeclareMathOperator\dist{dist}
\newcommand{\F}{C(V)}
\renewcommand{\P}{\mathrm{cap}}
\newcommand{\K}{\mathbb{K}}
\newcommand{\R}{\mathcal{R}}
\newcommand{\Hmm}[1]{\leavevmode{\marginpar{\tiny%
			$\hbox to 0mm{\hspace*{-0.5mm}$\leftarrow$\hss}%
			\vcenter{\vrule depth 0.1mm height 0.1mm width \the\marginparwidth}%
			\hbox to
			0mm{\hss$\rightarrow$\hspace*{-0.5mm}}$\\\relax\raggedright #1}}}
\newcommand{\eat}[1]{}
\author{Florian Fischer, Matthias Keller, Anna Muranova, Noema Nicolussi}
\address{Florian Fischer, Matthias Keller: Institut f\"ur Mathematik, Universit\"at Potsdam
	14476  Potsdam, Germany}
\email{florifis@uni-potsdam.de, matthias.keller@uni-potsdam.de}
 \address{Anna Muranova: Faculty of Mathematics and Computer Science, University of Warmia and Mazury in Olsztyn, ul. Sloneczna 54, 10-710 Olsztyn, Poland} 
 \email{anna.muranova@matman.uwm.edu.pl}
  \address{Noema Nicolussi: Faculty of Mathematics, University of Vienna, Oskar-Morgenster-Platz~1, 1090 Vienna, Austria} 
 \email{noema.nicolussi@univie.ac.at}
    \title[Capacity of graphs over non-Archimedean fields]{Capacity of infinite graphs over non-Archimedean ordered fields}
\begin{document}
   \begin{abstract}
In this article we study  the notion of capacity of a vertex for infinite graphs over non-Archimedean fields. In contrast to graphs over the real field monotone limits do not need  to exist. Thus, in our situation next to positive and null capacity there is a third case of divergent capacity. However, we show that either of these cases is independent of the choice of the vertex and is therefore a global property for connected graphs. The capacity is shown to connect the minimization of the energy, solutions of the Dirichlet problem and existence of a Green's function.   We furthermore give sufficient criteria in form of a Nash-Williams test, study the relation to Hardy inequalities and discuss the existence of positive superharmonic functions.  Finally, we investigate the analytic features of the transition operator in relation to the inverse of the Laplace operator.  

MSC2020: 31C20, 47S10, 05C50, 05C22, 12J15

Keywords: capacity, locally finite graphs, weighted graphs, non-Archimedean field, ordered field
    \end{abstract}
 \footnotetext[1]{The first author acknowledges support by the Heinrich Böll foundation, grant nr. P139140. The second author is grateful for the financial support of the DFG and is supported in part at the Technion by a fellowship from the Swiss Foundation.
 	The third author acknowledges financial support by the NCN (National Science Center, Poland), grant nr. 2022/06/X/ST1/00910. The fourth author acknowledges financial support by the FWF (Austrian Science Fund), grant nr. J4497N.}                      
\maketitle

\section{Introduction and fundamental notions}

The physical quantity  capacitance describes  the capacity of a capacitor to store an electric charge. This fundamental notion in the physics of electrostatics found its way into various fields of pure mathematics and has proven to be a most fruitful tool ever since. It is widely used in potential theory, analysis, probability theory and metric geometry, see e.g. \cite{Fukushima,Grigoryan,KellerLenzWojciechowski,Woess}.

Recently  graphs with weights in non-Archimedean ordered fields have been introduced in \cite{Muranova1,Muranova2}. Such graphs are the object of study of the present work. 
While this allows for the flexibility to study families of graphs with, for example, rational functions as weights, one still has a notion of positivity which allows for the use of  a maximum principle. This maximum principle is a powerful tool to study the Dirichlet problem and to introduce the capacity of a vertex for finite graphs with boundaries \cite{Muranova1,Muranova2}. There the capacity is studied under the name admittance. For infinite graphs, however, the situation becomes substantially more complicated. The capacity at a vertex on an infinite graph can be defined via three equivalent approaches:
\begin{itemize}
	\item[(1)] ... as the limit of the charge at said vertex for the solution of the Dirichlet problem along an exhaustion of the space (for example by distance balls).
	\item [(2)] ... as the infimum of the energy over functions normalized at said vertex.
	\item [(3)] ... as the reciprocal of the Green function at said vertex.
\end{itemize} 
For graphs with real weights, the existence of the capacity is in each of these approaches guaranteed by the monotonicity of the approximating quantity, cf. eg. \cite{LyonsPeres,Woess}. However, in a  non-Archimedean ordered field monotone limits do not need to exist any longer. Thus, it is only natural to expect that next to the case of a vertex having positive or null capacity there is also the case where the limit does not exist. We show that, if the limit exists for any of these approaches, then all of them lead to the same quantity in our situation. Additionally, we show that positive, null and divergent capacity is independent of the choice of the vertex for connected graphs. As in the context of real graphs this allows us to define the capacity type of the graph. In the real case the capacity type is a crucial property and relates to different aspects of analysis on graphs, such as transience and recurrence of random walks, superharmonic functions,  Hardy inequalities and many more.  Here we  investigate this notion for non-Archimedean graphs.

As mentioned above, studying more general weights allows for the study of edge weights of functions which may take non-positive or even non-real values, cf. \cite{Muranova1,Muranova2}. However, there is another advantage to study graphs over general ordered fields. Such weights make it more apparent which phenomena have their origin in the combinatorial geometry of the graph and which phenomena are a result of the geometry induced by the weights. This shall give insights into the understanding of weighted graphs on a structural level.

The paper is organized as follows. In the following two subsections we introduce the Laplacian over non-Archimedean ordered fields together with its energy form. In Section~\ref{sec:cap} we present the notion of capacity in terms of the charge of a vertex via approximating the space with distance balls, cf. (1) above. Before we investigate the relation of this notion to minimizing the energy, i.e., (2) or to a Green's function, i.e., (3) in Section~\ref{s:type}, we illustrate our notion by examples in Section~\ref{sec:examples}. Here we discuss weakly spherically symmetric graphs, path graphs and the relationship between capacity types of graphs over the real field and the Levi--Civita field. In Section~\ref{sec:suff}, we give a comparison principle and a sufficient criteria for null capacity in terms of a Nash-Williams test. The relation to Hardy inequalities is explored in Section~\ref{sec:hardy}. The existence of positive non-constant superharmonic functions is studied in Section~\ref{sec:sup}. The dichotomy of non-existence of such functions for null capacity and existence for positive capacity is in line with real graphs. However, in the case of divergent capacity the question is open and we present examples of existence of positive superharmonic functions for divergent capacity graphs. Finally, we study the relation of capacity and convergence of powers of a transition operator in Section~\ref{sec:trans}. In case these powers converge to zero the corresponding power series can be shown to converge as well. This power series is then an inverse of the corresponding Laplacian.
Furthermore, this can be understood as a first step towards a probabilistic investigation of such graphs.

\subsection{Weighted graphs over non-Archimedean ordered field}

An ordered field $(\K, \succ)$ is called {\em non-Archimedean} if there exists an {\em infinitesimal}, i.e., a $\tau \succ 0$ such that 
$$
{\tau}\prec \dfrac{1}{n}=\dfrac{1}{\underbrace{1+\dots+1}_{n\;\mbox{\scriptsize times}}}
$$
for any $n\in \mathbb N\subset\K$. Otherwise, the field is called {\em Archimedean}. Any element $\mathcal N$ with $\mathcal N\succ N$ for all $N\in \Bbb N$ is called {\em infinitely large}. Obviously, the ordered field is non-Archimedean if and only if there exists an infinitely large element.
In contrast, all Archimedean ordered fields are order isomorphic to subfields of $\mathbb R$, cf. \cite{Waerden}.

We denote the positive elements of $\K$ by
$$
\K^+=\{k\in \mathbb{K}\;\mid\;k\succ 0\}.
$$
By the \emph{absolute value of $k\in \K$}, we mean the element $ |k| $ in $\K^+\cup\{0\}$ defined as 
\begin{equation*}
|k|=\begin{cases}
k, &\mbox{if }k\succeq 0,\\
-k, &\mbox{otherwise.}
\end{cases}
\end{equation*}

The convergence of a sequence $(k_n) \subset \Bbb K$ to $k\in \Bbb K$ in order topology of the ordered field means by definition, that for any element $c\in \K^+$ there exists $N_0\in \Bbb N$ such that for any $n\ge N_0$ we have
$$
|k_n-k|\preceq c.
$$

Further, the convergence of a sequence $(k_n)\subset \Bbb K$ to $ \infty$ means, that for any element $c\in \K$ there exists $N_0\in \Bbb N$ such that for any $n\ge N_0$ we have 
$$
 k_n\succ c.
$$

An ordered field $\K$ is said to have a {\em countable cofinality} if there exists a countable subset $S\subset \K$ such that for any $k\in \K$ there exists $s\in S$ with $k\preceq s$. The following are equivalent, see \cite[Theorem~6]{Clark}:
\begin{enumerate}
\item
$\K$ has countable cofinality;
\item
$\K$ admits a convergence sequence which is not eventually a constant.
\end{enumerate}
An ordered field $\K$ is {\em real-closed} if every positive element in $\K$ has a square root and every polynomial over $\K$ of odd degree has a root in $\K$.

\begin{assumption}\label{a:field}    In this paper we assume that all
	 non-Archimedean ordered fields $ \mathbb{K} $ are Cauchy complete (in some literature the term {\em sequentially complete} is used \cite{Clark}, \cite{Hall}) and have countable cofinality.
\end{assumption}
Next, we introduce graphs over ordered fields.
\begin{definition}
A \emph{ graph} is a triple $(V, b, m)$, where $V$ is an  at most countable set, $m:V\to \K^+$ and
$b:V\times V\rightarrow \K^+\cup\{0\}$ satisfies the following properties:
\begin{enumerate}
\item
$b(x,y)=b(y,x)$ for any $x,y\in V$,
\item
$b(x,x)= 0$ for any $x\in V$.
\end{enumerate}
We call the elements of $ V $ \emph{vertices} and also speak of $ b $ being a graph over $ (V,m) $ or $ V $.
\end{definition}
Property (1) means that our graphs are symmetric and property (2) means that we consider graphs {\em without loops}. We write $x\sim y$ whenever $b(x,y)\ne 0$ and say that there is an {\em edge} between $x$ and $y$. A {\em path} between  vertices $x,y\in V$ is a finite sequence $(x_0,\ldots,x_{n}), n\in \mathbb N$,  of vertices such that
$$
x=x_0\sim x_1\sim x_2\sim \dots \sim x_n=y.
$$
We call the
minimal value of $n$ such that there exists a path connecting $x$ and $y$ the (combinatorial) \emph{graph distance}  $d\left( x,y\right) $. Furthermore, we denote the distance balls of radius $ n\in \mathbb{N} $ about a vertex $ a\in V $ by
\begin{align*}
	B_n(a)=\{ x\in V\mid d(a,x)<n \}.
\end{align*}
A set $ K\subseteq V $ is called {\em connected}, if there exists a path between any two vertices of it, and so we call  the graph \emph{connected} if $ V $ is connected.
 Furthermore, a graph is called {\em locally finite}, if for all $x\in V$ we have $\#\{y \in V\;\mid\;y\sim x\}<\infty$.  

\begin{assumption}\label{a:graph}  In this paper we assume that all graphs $ (V,b,m) $ over $ \mathbb{K} $ are locally finite and connected.
\end{assumption}

We denote the \emph{edge boundary} of a  set $ W \subseteq V $ by 
\begin{align*}
	\partial W = W\times (V\setminus W).
\end{align*} 
For $ x\in V $ and any finite subset $W \subset V$, we denote
$$
b(x)=\sum_{y\in V}b(x,y)\qquad \mbox{ and }\qquad b(W)=\sum_{y\in W} b(y).
$$
Furthermore,  for the  boundary $\partial W$ of a finite set $W\subset V$, we denote
$$
b(\partial W)=\sum_{{x\in W, \, y\in V\setminus W}}b(x,y).
$$

\subsection{Laplace operator and energy form}
For a graph $(V,b,m)$, the Laplace operator is defined on the set of all functions
$$
\F=\{f \mid f:V\to \K\}
$$
as follows
\begin{equation*}
\Delta f(x)=\dfrac{1}{m(x)}\sum_{y\in V} (f(x)-f(y))b(x,y).
\end{equation*}
The support of a function $f \in \F$ is defined by $\supp(f) = \{x\in V |\, f(x) \neq 0 \}$.
We introduce the set of compactly supported functions as
$$
C_c(V)=\{\phi \mid \phi:V\to \K, \phi\mbox{ has finite support}\}.
$$
Then we define a quadratic form 
$$
Q(f,g)=\dfrac{1}{2}\sum_{x,y\in V}(f(x)-f(y))(g(x)-g(y))b(x,y)
$$
whenever $f\in \F$ or $g\in \F$ is compactly supported.
The energy of a function $\phi\in C_c(V)$ is defined then as
\begin{equation*}
 Q(\phi)=\dfrac{1}{2}\sum_{x,y\in V}(\phi(x)-\phi(y))^2b(x,y)=\dfrac{1}{2}\sum_{x,y\in V}(\nabla_{xy}\phi)^2b(x,y),
\end{equation*}
where $\nabla_{xy}\phi=\phi(y)-\phi(x)$ is the \emph{difference operator}. 

The analogue of a dual pairing can be defined for $f,g\in \F$ by
$$
\langle f,g\rangle=\sum_{x\in V} f(x)g(x)m(x),
$$
again whenever $f$ or $g$ is compactly supported.
Since the following Green's formula \cite[Theorem~22]{Muranova1} holds when $f$ or $g$ is compactly supported
\begin{equation}\label{GreenfOF}\tag{GF}
\langle \Delta f,g\rangle = Q(f,g)= \frac{1}{2}\sum_{x,y\in V}(\nabla_{xy}f)(\nabla_{xy}g)b(x,y),
\end{equation}
we get the following equality for the energy of a compactly supported function $ \phi\in C_c(V) $
$$
Q(\phi)=\langle \Delta \phi,\phi\rangle=\sum_{x\in V} \Delta \phi(x)\phi(x)m(x).
$$

For a finite set $ K\subseteq V $, we let $ C_{c}(K) $ be the functions with support in $ K $ and we denote by $ C(K) $ the space of functions $ K\to\K $.  We consider the Dirichlet restriction  $ \Delta_{K} $ of the operator $\Delta$ to the finite subset $ K $ which is the restriction of $ \Delta $ to $ C_{c}(K)  \to C_c(V)$ (defined on a subspace of $C( V) $). By identifying functions in $ C(K) $ with functions in  $ C_{c}(K) $ we also write  $ \Delta_{K} $ for the operator   $ C(K)\to C(K) $ (defined on the finite dimensional space $ C(K) $)  with slight abuse of notation. Clearly, on infinite graphs the operator $ C_{c}(K)\to C_c(V) $ is not invertible, however, on connected graphs the operator $ C(K)\to C(K) $ is invertible as we will see below, cf. Lemma~\ref{rem::modDprFin}. In this case we denote by  $ \Delta_{K}^{-1} $ its inverse and again by slight abuse of notation we extend the functions in the image of $ \Delta_{K}^{-1} $ in $ C(K) $ by $ 0 $ to $ V $.

\section{The notion of capacity}\label{sec:cap}
In this section we introduce the effective capacity of a vertex. It turns out that in contrast to the classical situation of graphs over the real field there is a third option next to positive and null capacity. We will refer to the latter as divergent capacity. We first define the capacity on a finite subset via the the charge of the solution of the Dirichlet problem on this vertex.  This capacity will always be a positive element of $\K$. The capacity of a vertex on the infinite graph is then obtained by approximation provided the limit exists. In what follows let  $b$ be an infinite locally finite connected graph over $ (V,m) $ with respect to a non-Archimedean field $\mathbb K$ which satisfy Assumption~\ref{a:field} and~\ref{a:graph}.

\subsection{Dirichlet problem on finite subsets} We formulate a Dirichlet problem, which is known for real weighted graphs and naturally arises for the field of rational functions in electrical engineering (see \cite{Muranova1,Muranova2}). 

Let $ \emptyset\neq K\subseteq V $ be  a finite  connected subset of $V$ and $ a\in K $. The Dirichlet problem for $ K $ and $ a $ then looks for a solution $ v: V\to \mathbb{K} $ of the equation
\begin{equation}\label{dirpr} \tag{DP}
\begin{cases}
\Delta v=0 &\mbox{ on  }K\setminus\{a\},\\
v=1&\mbox{ on  }\{a\},\\
v= 0 &\mbox{ on  }V\setminus K.
\end{cases}
\end{equation}

This problem was studied in \cite{Muranova1} for graphs over non-Archimedean fields. Existence and uniqueness were answered affirmatively there. Moreover, positivity is concluded with the help of the following  maximum-minimum principle from \cite{Muranova1}. We denote by $ S_{1}(K) =\left(\bigcup_{x\in K}B_{2}(x)\right)\setminus K$ for a finite set $ K\subseteq V $.

\begin{proposition}[Maximum principle, Lemma 18 in \cite{Muranova1}]\label{pro::maxprinc} 
	Let $ K\subseteq V $ be finite and $ v $ a function on $ V $. If $ \Delta v\preceq 0 $ on $ K $, then
	\begin{align*}
		\max_{K}v\preceq \max_{S_{1} (K)} v
	\end{align*}
	and if  $ \Delta v\succeq 0 $ on $ K $, then
	\begin{align*}
		\min_{K}v\succeq \min_{S_{1}( K)} v.
	\end{align*} 
\end{proposition}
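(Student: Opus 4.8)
The plan is to prove the first inequality by a propagation-of-the-maximum argument and then obtain the second by applying the first to $-v$. Throughout I only use the definition of $\Delta$ together with the order structure of $\mathbb{K}$ and local finiteness of the graph; no limiting process enters, so the argument is essentially the one from the real-weighted case.

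First I would set $M=\max_K v$ and $M'=\max_{S_1(K)} v$. Both exist: $K$ is finite and $\mathbb K$ is totally ordered, while $S_1(K)$ is finite by local finiteness and nonempty because $K$ is finite, $V$ is infinite and connected, so there is at least one edge leaving $K$. Arguing by contradiction, assume $M\succ M'$ and consider the nonempty set $U=\{x\in K\mid v(x)=M\}$.

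The key step is to show that every neighbor of every vertex of $U$ again lies in $U$. Fix $x\in U$. By hypothesis $\Delta v(x)\preceq 0$, i.e.
$$
\sum_{y\in V}(M-v(y))\,b(x,y)=\sum_{y\in V}(v(x)-v(y))\,b(x,y)\preceq 0 .
$$
This sum is finite by local finiteness, and every summand is $\succeq 0$: if $y\in K$ then $v(y)\preceq M$, so $(M-v(y))b(x,y)\succeq 0$; if $y\notin K$ and $y\sim x$ then $y\in S_1(K)$, hence $v(y)\preceq M'\prec M$ and the summand is $\succ 0$; and if $y\not\sim x$ the summand vanishes. In an ordered field a finite sum of nonnegative elements which is $\preceq 0$ has all its summands equal to $0$. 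In particular, there can be no neighbor $y$ of $x$ outside $K$ (that summand would be strictly positive), and for every neighbor $y$ of $x$ we get $(M-v(y))b(x,y)=0$ with $b(x,y)\succ 0$, so $v(y)=M$ and thus $y\in U$.

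Since $U\neq\emptyset$ and, by connectedness of $V$, every vertex of $V$ is joined to some vertex of $U$ by a path, a straightforward induction along such a path forces $U=V$; this contradicts $U\subseteq K$ with $K$ finite and $V$ infinite. Hence $M\preceq M'$, that is, $\max_K v\preceq\max_{S_1(K)}v$. For the second statement, if $\Delta v\succeq 0$ on $K$ then $\Delta(-v)=-\Delta v\preceq 0$ on $K$, so the first part applied to $-v$ (and the same set $S_1(K)$) gives $\max_K(-v)\preceq\max_{S_1(K)}(-v)$, i.e. $\min_K v\succeq\min_{S_1(K)}v$. The only genuine subtlety, rather than a real obstacle, is the bookkeeping around $S_1(K)$: that it is finite and nonempty, and that any neighbor of a vertex of $K$ lying outside $K$ belongs to it.
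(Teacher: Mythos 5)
Your proof is correct: the propagation-of-the-maximum argument (all summands of $\sum_y(M-v(y))b(x,y)\preceq 0$ are nonnegative, hence zero, so the maximum set absorbs all neighbors and by connectedness would exhaust the infinite $V$) is sound, and the reduction of the minimum statement to the maximum statement via $-v$ is standard. The paper itself does not reproduce a proof but simply cites Lemma~18 of \cite{Muranova1}, where essentially this same argument is given, so there is nothing further to compare.
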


With this maximum-minimum principle at hand we are ready to address the fundamental result about solving the Dirichlet problem.

\begin{proposition}[Dirichlet problem \cite{Muranova1}]\label{lemma::PLap} Let $ K\subseteq V $ be finite and connected and $ a\in K $. 
	 The Dirichlet problem \eqref{dirpr} has a unique solution $ v $ which   satisfies
	 $$  0 \prec v \preceq 1\qquad \mbox{on } K$$ and
	 \begin{equation*}
	Q(v)=\Delta v(a)m(a)=-\sum_{x\in V\setminus K}\Delta v(x)m(x).
	 \end{equation*}
	 Moreover, for any function $f\in C_c(V)$ with $f(a)=1$  and $f=0$ on $ V\setminus K $,
	 $$
	 Q(v)\preceq Q(f),
	 $$
	 i.e., the solution of the Dirichlet problem minimizes the energy.
\end{proposition}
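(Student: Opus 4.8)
The plan is to view \eqref{dirpr} as a finite linear system, get solvability from the maximum principle, and then read off everything else from Green's formula \eqref{GreenfOF}. Set $K'=K\setminus\{a\}$; the degenerate case $K=\{a\}$ is immediate (the solution is the function equal to $1$ at $a$ and $0$ elsewhere), so assume $K'\neq\emptyset$. Since $v$ is prescribed on $\{a\}\cup(V\setminus K)$, solving \eqref{dirpr} amounts to determining $v|_{K'}$ from the $\#K'$ equations $\Delta v=0$ on $K'$; this affine system on the finite-dimensional space $C(K')$ has a unique solution if and only if its homogeneous counterpart (replace $v(a)=1$ by $v(a)=0$) has only the trivial solution. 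So let $u\in C_{c}(K')$ with $\Delta u=0$ on $K'$. Then $\Delta u\preceq 0$ and $\Delta u\succeq 0$ on $K'$, so Proposition~\ref{pro::maxprinc} gives $\max_{K'}u\preceq\max_{S_{1}(K')}u$ and $\min_{K'}u\succeq\min_{S_{1}(K')}u$. Every vertex of $S_{1}(K')$ lies outside $K'$, where $u$ vanishes, and $S_{1}(K')\neq\emptyset$ because the graph is connected and $K'\subsetneq V$ (note Proposition~\ref{pro::maxprinc} does not require $K'$ connected). Hence $u\equiv 0$, and \eqref{dirpr} has a unique solution $v$. (This also reproves the invertibility statement quoted from \cite{Muranova1}.)

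Next I would establish $0\prec v\preceq 1$ on $K$. Applying Proposition~\ref{pro::maxprinc} to $v$ on $K'$ (using $\Delta v=0$) gives $\max_{K'}v\preceq\max_{S_{1}(K')}v$ and $\min_{K'}v\succeq\min_{S_{1}(K')}v$. Each vertex of $S_{1}(K')$ is a neighbour of some vertex of $K'$ but not in $K'$, so it is either $a$ (where $v=1$) or lies in $V\setminus K$ (where $v=0$); thus $0\preceq v\preceq 1$ on $S_{1}(K')$, hence on $K'$, and together with $v(a)=1$ we get $0\preceq v\preceq 1$ on $K$. For strict positivity I would use a propagation-of-zeros argument: suppose $v(x_{0})=0$ for some $x_{0}\in K'$, and pick (by connectedness of $K$) a geodesic $x_{0}\sim x_{1}\sim\dots\sim x_{n}=a$ inside $K$, so that $x_{0},\dots,x_{n-1}\in K'$. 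The equation $\Delta v(x_{0})=0$ reads $\sum_{y}v(y)b(x_{0},y)=0$; since $v\succeq 0$ everywhere and $b(x_{0},\cdot)\succeq 0$, every summand is $0$, hence $v(y)=0$ for every neighbour $y$ of $x_{0}$, in particular $v(x_{1})=0$. Iterating along the path yields $v(a)=0$, a contradiction; therefore $v\succ 0$ on $K$.

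For the energy relations, note that $v\in C_{c}(V)$, so \eqref{GreenfOF} gives $Q(v)=\langle\Delta v,v\rangle=\sum_{x\in V}\Delta v(x)v(x)m(x)$; the only term that can survive is $x=a$ (on $K'$ we have $\Delta v=0$, on $V\setminus K$ we have $v=0$), whence $Q(v)=\Delta v(a)m(a)$. Pairing $\Delta v$ with the constant function $\mathbf 1$ in \eqref{GreenfOF} gives $\sum_{x\in V}\Delta v(x)m(x)=Q(v,\mathbf 1)=0$ (all differences of the constant vanish; the sum is finite by local finiteness and finiteness of $K$), and splitting it over $K'$, $\{a\}$, $V\setminus K$ yields $\Delta v(a)m(a)=-\sum_{x\in V\setminus K}\Delta v(x)m(x)$. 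Finally, for minimality let $f\in C_{c}(V)$ with $f(a)=1$, $f=0$ on $V\setminus K$, and put $\phi=f-v\in C_{c}(V)$; then $\phi(a)=0$, $\phi=0$ on $V\setminus K$, so $\supp\phi\subseteq K'$. Expanding the bilinear form, $Q(f)=Q(v)+2Q(v,\phi)+Q(\phi)$, and \eqref{GreenfOF} gives $Q(v,\phi)=\langle\Delta v,\phi\rangle=\sum_{x\in K'}\Delta v(x)\phi(x)m(x)=0$. Hence $Q(f)=Q(v)+Q(\phi)\succeq Q(v)$, since $Q(\phi)=\frac{1}{2}\sum_{x,y}(\nabla_{xy}\phi)^{2}b(x,y)$ is a sum of products of squares with non-negative weights and is therefore $\succeq 0$.

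I expect no step to be hard in the usual sense: over $\mathbb{K}$ nothing here involves limits, and the whole argument is finite-dimensional linear algebra plus the order structure (the maximum principle being supplied by Proposition~\ref{pro::maxprinc}). The two points that need genuine care are the strict bound $v\succ 0$, which is not delivered by the weak maximum principle and requires the connectedness of $K$ together with the sign of $b$ through the propagation argument above, and the bookkeeping showing that the a priori infinite sums $\sum_{x\in V}\Delta v(x)m(x)$ and $\langle\Delta v,\cdot\rangle$ are actually finite, so that the Green's formula manipulations are legitimate.
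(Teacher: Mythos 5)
Your proof is correct. Note, though, that the paper itself does not reprove most of this proposition: it simply cites Theorems 17, 25, 26 and Corollary 20 of \cite{Muranova1} for existence, uniqueness, $0\preceq v\preceq 1$, the energy identities and the minimization property, and only supplies an argument for the strict bound $0\prec v$ on $K$. On that one point your argument coincides with the paper's verbatim (zero propagation along a path in $K$ to $a$, using $\Delta v(x)=0$, $v\succeq 0$ and $b\succeq 0$ to kill all neighbours, contradicting $v(a)=1$). Everything else in your write-up is a reconstruction of the cited material, and it is sound: the reduction of solvability to injectivity of the finite-dimensional homogeneous system via Proposition~\ref{pro::maxprinc} applied to $K'=K\setminus\{a\}$ (with the correct observation that $S_1(K')\neq\emptyset$ because the ambient graph is infinite and connected, and that the maximum principle does not need $K'$ connected), the bound $0\preceq v\preceq 1$ from the boundary values on $S_1(K')\subseteq\{a\}\cup(V\setminus K)$, the identity $Q(v)=\Delta v(a)m(a)$ by pairing with $v$, the identity $\sum_x\Delta v(x)m(x)=0$ by pairing with $\mathbf 1$ (legitimate since $\Delta v$ has finite support by local finiteness), and the Dirichlet principle $Q(f)=Q(v)+Q(f-v)\succeq Q(v)$ using that $f-v$ is supported in $K'$ where $\Delta v$ vanishes and that squares are nonnegative in an ordered field. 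What your version buys is self-containedness; what the paper's version buys is brevity by delegation. One small caveat: your parenthetical claim to have ``reproved'' the invertibility statement of Lemma~\ref{rem::modDprFin} is slightly off target, since there the operator inverted is $\Delta_K$ on all of $C(K)$ rather than $\Delta_{K'}$, though the same maximum-principle argument does cover that case as well.
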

\begin{proof} All statements except for $ 0\prec v $ follow from  Theorem 17, 25, 26 and Corollary~20 in \cite{Muranova1} (where only $ 0\preceq v $ was established). To see $ 0\prec v $ on $K$, assume $ v(x)=0 $ for some $ x\in K $.  By the equation $ \Delta v(x)=0 $ and $ v\succeq 0 $ we see that $ v $ vanishes in all neighbors of $ x $.  By iteration and  connectivity of $ K $ we infer $ v(a)=0 $ which is a contradiction.
\end{proof}

The effective capacity for a finite graph with boundary was introduced in \cite{Muranova1}, where it is called the admittance of the graph. Here we formulate it for finite subsets.

\begin{definition}[Effective capacity]
The \emph{effective capacity}  for a finite and connected $K\subseteq  V$ and $ a\in K $ is defined as
$$
\P_{K}(a)=\Delta v(a)m(a),
$$
where $v$ is the solution of the Dirichlet problem \eqref{dirpr}.
\end{definition}

\begin{remark}\label{rem:m}Note that although $ m $ appears in the definition above, the effective capacity indeed does not depend on $m$ since it cancels with the $ m $ appearing in the definition of the Laplacian.
\end{remark}

We also need a renormalized version of the Dirichlet problem,  where instead of normalizing the potential we normalize the charge in a vertex $ a $ of a finite connected set $ K\subseteq V $
\begin{equation}\label{dirprmod}\tag{$ \widetilde{\mathrm{DP}}$}
	\begin{cases}
		\Delta \widetilde v=0 &\mbox{ on }K\setminus \{a\},\\
		\Delta \widetilde v=1 &\mbox{ on }\{a\},\\
	\widetilde v=0&\mbox{ on }V\setminus K.
	\end{cases}
\end{equation}

Existence and uniqueness of solutions of the renormalized Dirichlet problem follow from the corresponding consideration of the Dirichlet problem above. For a vertex $ a\in V $, we denote by $ 1_{a} $ the function taking the value $ 1 $ at $ a $ and $ 0 $ otherwise.

\begin{lemma}[Renormalized Dirichlet problem]\label{rem::modDprFin} Let $ K\subseteq V $ be finite and connected and $ a\in K $. 
	The Dirichlet problem \eqref{dirprmod} has a unique solution $\widetilde v $  which satisfies
	$$ \widetilde v=\dfrac {v}{\Delta v(a)}\qquad\mbox{and }\qquad v=\dfrac{ \widetilde v}{\widetilde v(a)}, $$ where $v$ is the solution of the Dirichlet problem \eqref{dirpr}. Furthermore, the Dirichlet Laplacian $ \Delta_{K} $ is invertible as an operator $ C(K)\to C(K) $ and 
	$$
	\widetilde v=\Delta^{-1}_{K}1_a
	$$
	and
	$$
	\P_{K}(a)=\dfrac{m(a)}{\widetilde v(a)}.
	$$
\end{lemma}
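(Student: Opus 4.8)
The plan is to derive everything from Proposition~\ref{lemma::PLap} on the original Dirichlet problem \eqref{dirpr} together with the maximum principle Proposition~\ref{pro::maxprinc}. First I would record that $\Delta v(a)\succ 0$, where $v$ is the solution of \eqref{dirpr}. Indeed, by Proposition~\ref{lemma::PLap} we have $Q(v)=\Delta v(a)m(a)$, and $Q(v)\succeq 0$ because it is a sum of squares weighted by the nonnegative $b(x,y)$; moreover $Q(v)=0$ would force $v$ to be constant along every edge, hence constant on $V$ by connectedness, contradicting $v(a)=1$ and $v\equiv 0$ on the nonempty set $V\setminus K$. So $Q(v)\succ 0$ and, since $m(a)\succ 0$, $\Delta v(a)\succ 0$. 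Consequently $\widetilde v:=v/\Delta v(a)$ is well defined, and since $\Delta(\widetilde v)=\Delta v/\Delta v(a)$, the three lines of \eqref{dirpr} translate directly into the three lines of \eqref{dirprmod}; this gives existence. From $v(a)=1$ we get $\widetilde v(a)=1/\Delta v(a)\succ 0$, so $\widetilde v/\widetilde v(a)=\Delta v(a)\cdot\widetilde v=v$, which is the second asserted identity.

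Next I would treat uniqueness and invertibility in one stroke. If $\widetilde v_{1},\widetilde v_{2}$ both solve \eqref{dirprmod} then $w:=\widetilde v_{1}-\widetilde v_{2}$ satisfies $\Delta w=0$ on $K$ and $w=0$ on $V\setminus K$, hence $w=0$ on $S_{1}(K)\subseteq V\setminus K$; note $S_{1}(K)\neq\emptyset$ since the connected graph is infinite and $K$ finite, so some vertex of $V\setminus K$ is adjacent to $K$. Applying Proposition~\ref{pro::maxprinc} to $w$ (with $\Delta w\preceq 0$ on $K$) and to $-w$ gives $\max_{K}w\preceq\max_{S_{1}(K)}w=0$ and $\min_{K}w\succeq\min_{S_{1}(K)}w=0$, so $w\equiv 0$. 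The identical argument applied to any $u\in C(K)$ with $\Delta_{K}u=0$ — viewed, after the identification $C(K)\cong C_{c}(K)$, as a function on $V$ vanishing off $K$ and solving $\Delta u=0$ on $K$ — yields $u=0$; since $C(K)$ is finite dimensional, $\Delta_{K}\colon C(K)\to C(K)$ is therefore invertible. Because $\widetilde v$ is supported in $K$, restricting the relations $\Delta\widetilde v=0$ on $K\setminus\{a\}$ and $\Delta\widetilde v=1$ at $a$ to $K$ shows $\Delta_{K}\widetilde v=1_{a}$, i.e.\ $\widetilde v=\Delta_{K}^{-1}1_{a}$. Finally, by the definition of $\P_{K}(a)$ and the relation $\Delta v(a)=1/\widetilde v(a)$,
\[
\P_{K}(a)=\Delta v(a)\,m(a)=\frac{m(a)}{\widetilde v(a)}.
\]

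I do not expect a serious obstacle here: the statement is essentially bookkeeping on top of the cited results. The only points that need care are making sure $\Delta v(a)\neq 0$ (so that the rescaling $v/\Delta v(a)$ is legitimate), keeping the identification of $C(K)$ with $C_{c}(K)\subseteq C_{c}(V)$ straight when speaking of $\Delta_{K}$ and its kernel, and checking that $S_{1}(K)\neq\emptyset$ so that the maximum principle actually has content — all of which follow from finiteness of $K$ together with connectedness and infinitude of the graph. As an alternative to invoking the maximum principle for $\widetilde v(a)\succ 0$, one can also use Green's formula to compute $Q(\widetilde v)=\langle\Delta\widetilde v,\widetilde v\rangle=\widetilde v(a)m(a)$ and argue as for $Q(v)$ above; this again reduces the whole lemma to Proposition~\ref{lemma::PLap}.
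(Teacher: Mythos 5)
Your proof is correct and establishes every assertion of the lemma. The existence part --- rescaling $v$ by $\Delta v(a)$ after verifying $\Delta v(a)\succ 0$ via $Q(v)=\Delta v(a)m(a)$ and the sum-of-squares structure of $Q$ --- is essentially the paper's argument, just spelled out in more detail. Where you genuinely diverge is in uniqueness and invertibility: you show directly that the kernel of $\Delta_{K}$ on $C(K)$ is trivial by applying the maximum principle, Proposition~\ref{pro::maxprinc}, to the difference of two solutions (correctly noting that $S_{1}(K)\neq\emptyset$ because the graph is infinite, connected and $K$ is finite), and then invoke finite-dimensionality to get invertibility, from which uniqueness and $\widetilde v=\Delta_{K}^{-1}1_{a}$ fall out. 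The paper instead argues via Green's formula: any solution $\widetilde v$ of \eqref{dirprmod} must have $\widetilde v(a)\neq 0$, since otherwise $Q(\widetilde v)=\langle\Delta\widetilde v,\widetilde v\rangle=\widetilde v(a)m(a)=0$ would force $\widetilde v\equiv 0$ and contradict $\Delta\widetilde v(a)=1$; one then rescales to a solution of \eqref{dirpr} and imports uniqueness from Proposition~\ref{lemma::PLap}. Both routes are sound and rely on the same standing hypothesis that $V\setminus K\neq\emptyset$; yours has the small advantage of proving injectivity of $\Delta_{K}$ on all of $C(K)$ in one stroke rather than only for the particular right-hand side $1_{a}$, which is exactly what the claimed invertibility requires. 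Your closing remark shows you also see the Green's-formula alternative, which is in fact the paper's choice.
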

\begin{proof} For existence, we observe  first that $ v\neq const $ as  $\Delta\widetilde v(a)=1 $. Hence, by  Proposition~\ref{lemma::PLap} that the denominator satisfies $\Delta v(a)=Q(v)/m(a)\ne 0$. This gives a solution $ \widetilde v= v/\Delta v(a)$ by direct calculation.
	
To see uniqueness, assume $ \widetilde v $ is a solution of \eqref{dirprmod}. To show $\widetilde v(a)\ne 0$, we assume $\widetilde v(a)=0$. Then, $ \Delta \widetilde v=1_{a} $ on $ K $ 
gives by Green's formula
\begin{align*}
	Q(\widetilde v)=\langle \Delta \widetilde v,\widetilde v\rangle =\widetilde v(a)=0
\end{align*}
which yields $ \widetilde v=\mathrm{const}=0 $ which contradicts $\Delta\widetilde v(a)=1 $. Thus, $ \widetilde v (a)\ne0 $ and $  v=  \widetilde v/\widetilde{v}(a) $ is a solution of \eqref{dirpr} which is unique by Proposition~\ref{lemma::PLap}. Thus, $ \widetilde v $ is unique.

Furthermore, \eqref{dirprmod} can be rewritten as the problem
\begin{align*}
	\Delta_{K} \widetilde v =1_{a} \qquad \mbox{ on }C(K).
\end{align*}
So existence and uniqueness shows that $ \Delta_{K} $ is invertible on $ C(K) $ which allows us to write $ \widetilde v= \Delta_{K}^{-1}1_{a} $ for the solution of \eqref{dirprmod}. The last formula follows now directly from Proposition~\ref{lemma::PLap}.
\end{proof}

The following lemma shows how  solutions of the Dirichlet problem \eqref{dirpr}, normalized in different vertices are related.
\begin{lemma}\label{Lem::FiniteSolxy} Let $ K \subseteq V$ be finite and connected and $ x,y \in K $. Let $ v^{x} $, respectively $ v^{y} $, be solutions of \eqref{dirpr} for $ K $ and $x  $, respectively $ y $, as well as
$ \widetilde v^{x} $, $ \widetilde v^{y} $ corresponding solutions to \eqref{dirprmod}. Then,
$$
\dfrac{v^x(y)}{\P_{K}(x)}=\dfrac{v^y(x)}{\P_{K}(y)}
$$
and
$$
\widetilde v^{x}(y)m(y)=\widetilde v^{y}(x){m(x)}.
$$ 
\end{lemma}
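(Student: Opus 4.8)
The plan is to derive both identities from the symmetry of the Laplacian encoded in Green's formula \eqref{GreenfOF}. The key point is that $v^{x},v^{y}$ (and likewise $\widetilde v^{x},\widetilde v^{y}$) are supported in the finite set $K$, hence compactly supported, so \eqref{GreenfOF} together with the manifest symmetry of $Q$ gives
$$
\langle \Delta v^{x},v^{y}\rangle=Q(v^{x},v^{y})=Q(v^{y},v^{x})=\langle \Delta v^{y},v^{x}\rangle .
$$
Everything then reduces to evaluating the two outer sides.

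To evaluate $\langle \Delta v^{x},v^{y}\rangle=\sum_{z\in V}\Delta v^{x}(z)\,v^{y}(z)\,m(z)$ I would first discard all $z\in V\setminus K$, where $v^{y}(z)=0$; on the remaining set $K$ the solution $v^{x}$ of \eqref{dirpr} satisfies $\Delta v^{x}=0$ on $K\setminus\{x\}$, so only the single term $z=x$ survives and $\langle \Delta v^{x},v^{y}\rangle=\Delta v^{x}(x)\,v^{y}(x)\,m(x)=\P_{K}(x)\,v^{y}(x)$ by the very definition of $\P_{K}(x)$. The identical computation gives $\langle \Delta v^{y},v^{x}\rangle=\P_{K}(y)\,v^{x}(y)$, and comparing the two yields $\P_{K}(x)v^{y}(x)=\P_{K}(y)v^{x}(y)$, which is the first claimed identity. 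For the second identity I would pass to the renormalized solutions through Lemma~\ref{rem::modDprFin}: from $v^{x}=\widetilde v^{x}/\widetilde v^{x}(x)$ and $\P_{K}(x)=m(x)/\widetilde v^{x}(x)$ one obtains $v^{x}(y)/\P_{K}(x)=\widetilde v^{x}(y)/m(x)$, and symmetrically $v^{y}(x)/\P_{K}(y)=\widetilde v^{y}(x)/m(y)$; plugging these into the first identity gives $\widetilde v^{x}(y)m(y)=\widetilde v^{y}(x)m(x)$. Alternatively, one may run the same Green's formula argument directly with $\widetilde v^{x},\widetilde v^{y}$ in place of $v^{x},v^{y}$, using $\Delta\widetilde v^{x}=1_{x}$ on $K$, so that the two sides become $\langle 1_{x},\widetilde v^{y}\rangle=\widetilde v^{y}(x)m(x)$ and $\langle 1_{y},\widetilde v^{x}\rangle=\widetilde v^{x}(y)m(y)$.

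I do not anticipate a genuine obstacle here; the argument is essentially a self-adjointness computation. The two points requiring attention are the applicability of \eqref{GreenfOF} — which is immediate, since every function involved is supported in the finite set $K$ and hence compactly supported — and the bookkeeping of supports when collapsing the sums: note that $\Delta v^{x}$ is in general \emph{not} supported inside $K$ (there is outflow across $\partial K$), but those contributions are annihilated by $v^{y}$, so the pairing really does reduce to the single term at $z=x$. Correctly tracking the normalizations relating $v$, $\widetilde v$ and $\P_{K}$ from Lemma~\ref{rem::modDprFin} is the only other thing to keep straight.
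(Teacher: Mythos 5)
Your proposal is correct and follows essentially the same route as the paper: both rest on the symmetry of $Q$ via Green's formula \eqref{GreenfOF} applied to the Dirichlet solutions, the only cosmetic difference being that the paper pairs the renormalized solutions $\widetilde v^{x}=\Delta_K^{-1}1_x$ first and then deduces the normalized identity, whereas you do it in the opposite order (and you note the paper's version as your ``alternative''). Your care about $\Delta v^{x}$ not vanishing outside $K$ but being killed by $v^{y}$ there is exactly the right bookkeeping.
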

\begin{proof} By Lemma~\ref{rem::modDprFin} above we have $ \widetilde v^{z}=\Delta^{-1}_{K}1_z $ for $ z=x,y $. Now, by Green's formula \eqref{GreenfOF} for $Q(\Delta^{-1}_{K}1_y,\Delta^{-1}_{K}1_x)$,  we get
$$
\sum_{z\in K}{1_y(z)} \Delta^{-1}_{K}1_x (z)m(z)=Q(\Delta^{-1}_{K}1_y,\Delta^{-1}_{K}1_x)=Q(\Delta^{-1}_{K}1_x,\Delta^{-1}_{K}1_y)=\sum_{z\in K}{1_x(z)} \Delta^{-1}_{K}1_y (z)m(z)
$$
from which the second statement follows. The first statement  now follows from  Lemma~\ref{rem::modDprFin}  as
\begin{align*}
\dfrac{v^x(y)}{\P_{K}(x)}=\frac{\widetilde v^{x}(y)}{m(x)}=\frac{\widetilde v^{y}(x)}{m(y)}=\dfrac{v^y(x)}{\P_{K}(y)}.
\end{align*}
This finishes the proof.
\end{proof}

\subsection{Capacity as a charge}

To define the capacity of an infinite graph we start with finite approximations, being distance balls of some fixed vertex, and follow the approach from \cite{Grimmett} and \cite{Muranova2} for networks. 

Let $a\in V$ be fixed. Recall the distance balls $B_n(a)=\{x\in V\mid\dist (a,x)< n\}$, $n\in\mathbb N$, about  $ a$ and denote 
 $$ \P_n(a):=\P_{B_{n}(a)}(a). $$ 
 Similar to the case of graphs over the real field $\mathbb{R}$ it is known that effective capacity is monotonically decreasing in $ n $.

\begin{proposition}[Theorem 15 in \cite{Muranova2}]\label{Thm:monotonicity} For all $ a\in V $ and $ n\in \mathbb{N} $
\begin{equation*}
\P_{n+1}(a)\preceq \P_n(a).
\end{equation*}
\end{proposition}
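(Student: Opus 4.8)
The plan is to use the energy-minimizing property of solutions of the Dirichlet problem stated in Proposition~\ref{lemma::PLap} together with the identification $\P_{K}(a) = Q(v)$ from the same proposition (recall $\P_K(a) = \Delta v(a)m(a) = Q(v)$). Fix $a \in V$ and $n \in \mathbb{N}$, and write $K = B_n(a)$ and $K' = B_{n+1}(a)$, so that $K \subseteq K'$. Let $v$ be the solution of \eqref{dirpr} for $K$ and $a$, and let $v'$ be the solution of \eqref{dirpr} for $K'$ and $a$. Then $\P_n(a) = Q(v)$ and $\P_{n+1}(a) = Q(v')$.

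The key observation is that $v$ is an admissible competitor for the minimization problem defining $Q(v')$: indeed $v \in C_c(V)$ since it is supported in the finite set $K$, it satisfies $v(a) = 1$, and it vanishes on $V \setminus K \supseteq V \setminus K'$. Hence by the energy-minimality clause of Proposition~\ref{lemma::PLap} applied on $K'$,
$$
\P_{n+1}(a) = Q(v') \preceq Q(v) = \P_n(a),
$$
which is exactly the claimed inequality. This is essentially the standard Rayleigh-monotonicity argument, and since it only uses the variational characterization — which holds verbatim over $\K$ by Proposition~\ref{lemma::PLap} — no genuinely new difficulty arises from the non-Archimedean setting here.

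The one point that needs a word of care is that $B_{n+1}(a)$ must itself be finite and connected for Proposition~\ref{lemma::PLap} to apply to $K'$; finiteness follows from local finiteness of the graph (Assumption~\ref{a:graph}) and connectedness of $B_{n+1}(a)$ from connectedness of $V$ together with the fact that distance balls about $a$ are geodesically convex in the combinatorial metric (any shortest path from $a$ to $x \in B_{n+1}(a)$ stays inside $B_{n+1}(a)$). I expect this verification to be routine; the main (very mild) obstacle is simply to state the inclusion $C_c$-membership of $v$ carefully so that the invocation of energy-minimality on the larger set is clean. Since the statement is quoted from \cite{Muranova2}, one may alternatively just cite it, but the short self-contained argument above is worth recording.
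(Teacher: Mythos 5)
Your argument is correct. The paper itself does not prove this proposition but imports it from \cite{Muranova2}; your self-contained Rayleigh-monotonicity argument — using that the solution on $B_n(a)$, extended by zero, is an admissible competitor for the energy-minimization characterizing $\P_{n+1}(a)$ in Proposition~\ref{lemma::PLap}, together with $\P_K(a)=Q(v)$ — is the standard route and is valid as written, including the routine checks that $B_{n+1}(a)$ is finite (local finiteness) and connected (shortest paths to $a$ stay in the ball).
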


Beware that even in a Cauchy complete non-Archimedean ordered field the inequalities above do not imply that the sequence $(\P_n(a))$ converges.

\begin{definition}[Definition 17 in \cite{Muranova2}]\label{defRT}
If for a fixed vertex $a\in V$ the limit of the sequence $(\P_n(a))$ exists in the order topology of $\K$, we call it \emph{effective capacity of $a$} and denote it by $$ \P(a) :=\lim_{n\to\infty }\P_n(a).$$
\end{definition}

The following theorem shows that existence of the limit is independent of the base vertex $ a $, which can be considered as one of the main results of this work. Its proof is given in Section~\ref{s:type}.

\begin{theorem}[Independence on the vertex]\label{Thm::ind}
 If $ \P(a) $ exists for some $ a\in V $, then it exists for all $ x\in V $. Moreover, if $ \P(a) =0$  for some $ a\in V $, then $ \P(x) =0$  for all $ x\in V $.
\end{theorem}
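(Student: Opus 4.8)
### Proof plan

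The plan is to exploit the identity from Lemma~\ref{Lem::FiniteSolxy} to relate the capacities $\P_{K}(x)$ and $\P_{K}(a)$ on a common finite set $K$ via a \emph{bounded} correction factor, and then pass this boundedness to the limit. Concretely, for a fixed edge-path $a=z_0\sim z_1\sim\dots\sim z_k=x$ I would first treat the case of neighboring vertices $x\sim a$, and then chain. So suppose $x\sim a$ and let $K\subseteq V$ be finite and connected with $x,a\in K$. Writing $v^a,v^x$ for the solutions of \eqref{dirpr} on $K$ normalized at $a$, resp.\ $x$, Lemma~\ref{Lem::FiniteSolxy} gives
\begin{equation*}
\P_{K}(x)=\P_{K}(a)\,\frac{v^x(a)}{v^a(x)}.
\end{equation*}
By Proposition~\ref{lemma::PLap} we have $0\prec v^a(x)\preceq 1$ and $0\prec v^x(a)\preceq 1$, so the ratio is positive; the crux is to bound it from above and below by fixed positive elements of $\K$ independent of $K$ (for $K$ large enough to contain $B_2(a)\cup B_2(x)$, say). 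For the lower bound on $v^a(x)$: since $x\sim a$ and $\Delta v^a(x)\cdot m(x)=\sum_{y}(v^a(x)-v^a(y))b(x,y)$, and $\Delta v^a(x)$ equals $0$ if $x\ne a$ is the only issue — actually the clean route is: $v^a$ is the energy minimizer, and comparing with the test function $f=1_a$ gives $Q(v^a)\preceq Q(1_a)=b(a)$, hence $\P_{K}(a)=Q(v^a)\preceq b(a)$ and likewise $\P_K(x)\preceq b(x)$, uniformly in $K$. I would then extract the pointwise lower bound $v^a(x)\succeq c$ for an explicit constant $c=c(a,x)\succ0$: using $\Delta v^a(x)=0$ (assuming $x\ne a$) rearranged as $v^a(x)=\frac{1}{b(x)}\sum_{y\sim x}v^a(y)b(x,y)\succeq \frac{b(a,x)}{b(x)}v^a(a)=\frac{b(a,x)}{b(x)}$, since all terms are nonnegative and $v^a(a)=1$. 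This is the key elementary estimate and it is uniform in $K$.

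With these uniform two-sided bounds, $\P_{K}(x)$ is sandwiched between fixed positive elements times $\P_K(a)$:
\begin{equation*}
\frac{b(a,x)}{b(x)}\,\P_{K}(x)\;=\;\frac{b(a,x)}{b(x)}\,\P_{K}(a)\,\frac{v^x(a)}{v^a(x)}\;\preceq\;\P_{K}(a)\,v^x(a)\;\preceq\;\P_{K}(a),
\end{equation*}
and symmetrically $\frac{b(a,x)}{b(x)}\,\P_{K}(a)\preceq\P_{K}(x)$. Hence $\P_K(x)=\P_K(a)\,\varrho_K$ with $\varrho_K\in\K$ satisfying $\frac{b(a,x)}{b(x)}\preceq\varrho_K\preceq\frac{b(x)}{b(a,x)}$ for all large $K$. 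Now assume $\P(a)=\lim_n\P_n(a)$ exists. If $\P(a)\succ0$, then $\P_n(a)$ is eventually bounded below by a fixed positive element, so $\varrho_{B_n(x')}=\P_{B_n(x')}(x)/\P_{B_n(x')}(a)$ would need to be analyzed — but here a subtlety appears: the exhausting sets are distance balls around \emph{different} centers. I would handle this by noting $B_n(a)\subseteq B_{n+1}(x)\subseteq B_{n+2}(a)$ for $x\sim a$, so by the monotonicity Proposition~\ref{Thm:monotonicity}, $\P_{n+2}(a)\preceq\P_{B_{n+1}(x)}(a)\preceq\P_n(a)$, forcing $\P_{B_{n+1}(x)}(a)\to\P(a)$ as well; then applying Lemma~\ref{Lem::FiniteSolxy} on $K=B_{n+1}(x)$ gives $\P_{n+1}(x)=\P_{B_{n+1}(x)}(a)\cdot(v^x_{K}(a)/v^a_{K}(x))$, a product of a convergent sequence with a sequence living in the fixed order-bounded set $[\,b(a,x)/b(x),\,b(x)/b(a,x)\,]$.

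The main obstacle is therefore the final convergence step: a bounded sequence in a non-Archimedean Cauchy-complete field need \emph{not} converge (that is the whole point of the paper), so I cannot simply pass to a subsequence. Instead I would argue directly with Cauchy sequences: show that $(v^x_{B_n(x)}(a))_n$ and $(v^a_{B_n(x)}(x))_n$ are themselves convergent (not merely bounded). For this I would revisit Lemma~\ref{Lem::FiniteSolxy} in the symmetric form $\widetilde v^x(a)m(a)=\widetilde v^a(x)m(x)$ together with $\P_K(a)=m(a)/\widetilde v^a(a)$ from Lemma~\ref{rem::modDprFin}, rewriting everything in terms of the quantities $\widetilde v^a_{B_n(a)}(x)$ whose convergence can be tied to that of the Green-type kernel; one expects $\widetilde v^a_{B_n(a)}$ to be monotone increasing in $n$ (being $\Delta^{-1}_{B_n(a)}1_a\geq0$ and comparable across nested sets), and if $\P_n(a)$ converges with positive limit then $\widetilde v^a_{B_n(a)}(a)=m(a)/\P_n(a)$ converges, which should propagate to convergence of $\widetilde v^a_{B_n(a)}(x)$ for every $x$ by a maximum-principle comparison. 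This propagation is the technical heart; granting it, $\P(x)=\lim_n\P_n(x)$ exists, and in the case $\P(a)=0$ the sandwich $\frac{b(a,x)}{b(x)}\P_n(a)\preceq\P_{B_{n+1}(x)}(x)\cdot(\text{bounded})$ — more precisely $\P_{n+1}(x)\preceq \frac{b(x)}{b(a,x)}\P_{B_{n+1}(x)}(a)\preceq\frac{b(x)}{b(a,x)}\P_n(a)\to0$ — immediately gives $\P(x)=0$. Finally, for general $x$ not adjacent to $a$, iterate the neighbor case along a path, multiplying the bounded correction factors, which remain order-bounded since the path is finite.
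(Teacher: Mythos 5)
Your reduction to neighboring vertices via Lemma~\ref{Lem::FiniteSolxy}, the uniform lower bound $v^a(x)\succeq b(a,x)/b(x)$ obtained from harmonicity at $x$, and the resulting two-sided sandwich are all correct, and they do settle the null-capacity case (the paper instead derives this case from the energy-infimum characterization, Theorem~\ref{Lem::inf}, together with the path estimate of Lemma~\ref{lem:FTC}, but your route is equally valid). The problem is the positive-capacity case. You correctly diagnose that order-boundedness of the correction factor $v^x_K(a)/v^a_K(x)$ proves nothing in a non-Archimedean field, but you then leave the decisive step --- that the pointwise values $v^a_{K_n}(x)$, equivalently $\widetilde v^a_{K_n}(x)$, actually \emph{converge} --- as an unproven claim (``granting it''). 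This is not a routine verification: monotonicity plus boundedness is exactly what fails in this setting, and a ``maximum-principle comparison'' does not by itself produce a Cauchy estimate. Moreover, even granting convergence of $v^a_{K_n}(x)$, your identity $\P_K(x)=\P_K(a)\,v^x_K(a)/v^a_K(x)$ still contains the unknown $v^x_K(a)=\widetilde v^x_K(a)/\widetilde v^x_K(x)$, whose denominator is essentially $m(x)/\P_K(x)$, the very quantity you are trying to control; as written the argument is circular at this point.

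The paper closes both holes with two specific devices that your write-up would need to supply. First, for consecutive solutions $v_n,v_{n+1}$ on nested sets one computes, using Green's formula and the fact that $\Delta v_{n+1}$ vanishes away from $a$ while $v_n(a)=1$, that $Q(v_{n+1}-v_n)=\P_n(a)-\P_{n+1}(a)\to 0$; combined with Lemma~\ref{lem:FTC} and $(v_{n+1}-v_n)(a)=0$ this yields $|v_{n+1}(x)-v_n(x)|\to 0$, hence convergence of $v_n(x)$ by Proposition~\ref{prop:studentsdream} (this is part (2) of Theorem~\ref{Thm::solutions}). Second, Lemma~\ref{lem::convergence} shows that if a product of two positive monotone increasing sequences converges, then each factor converges; applied to $v_n(a_0)\cdot\bigl(1/Q(v_n)\bigr)=v^0_n(a)/Q(v^0_n)$ this breaks the circularity and gives convergence of $1/\P_n(a)$. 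An alternative completion closer to your sketch does exist: the differences $\widetilde v_{K_{n+1}}-\widetilde v_{K_n}$ are nonnegative and harmonic on $K_n$, so the local Harnack inequality, Theorem~\ref{localHarnack}, bounds their value at $x$ by a fixed multiple of their value at $a$, which tends to $0$ precisely when $\P_n(a)$ converges to a nonzero limit. But some such quantitative Cauchy estimate is indispensable, and it is absent from your proposal.
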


The previous theorem gives rise to the following definition of the capacity type of a graph.

\begin{definition}[Capacity of an infinite graph]\label{Def::type}
The  graph   is said to have
\begin{enumerate}
\item  
{\em null capacity}, if $\P(a)=0$ for some (all) $a\in V$,
\item 
{\em positive capacity}, if $\P(a)$ exists and $\P(a)\ne 0$ for some (all) $a\in V$,
\item
{\em divergent capacity}, if $\P(a)$ does not exist for some (all) $a\in V$.
\end{enumerate}

\end{definition}

\begin{remark}
	(a) Note that $ \P_{n}(a) $ does not depend on $ m $, confer Remark~\ref{rem:m}, so $ \P(a) $ does not depend on $m$. 
	
	(b) When the limit does not exist, the sequence $\P_n(a)$ is still  bounded  as it is positive and decaying by Theorem~\ref{Thm:monotonicity}.
\end{remark}

\begin{remark}\label{rem:prob}For real weighted graphs the capacity is intimately related to the notion of recurrence and transience. Specifically, recurrent graphs are characterized by null capacity and  transient graphs by positive capacity which  holds as a dichotomy for connected graphs. At this point, we refrain from elaborating on this relation with respect to non-Archimedean fields. The reason is  that a probabilistic interpretation  is more subtle in this setting. Specifically, in the non-Archimedean context, a property is said to hold almost surely if it holds outside of an event $ N $ which has probability less than every positive non-infinitesimal quantity \cite[p.~25]{Nelson}.  Thus, one should for example be hesitant to call a graph with positive but infinitesimal small capacity transient. The probabilistic interpretation of these notions will be  addressed in a later investigation.
\end{remark}

\section{Examples}\label{sec:examples}
Before going deeper into the general theory, we first give some examples to establish a basic intuition. We start with a general class of examples, so called weakly spherically symmetric graphs which allow for an explicit formula to calculate the capacity. Secondly, we will discuss specific examples over the Levi-Civita field. Finally, in this section we explore the relation between capacity of graphs over the Levi--Civita field and the real field.

\subsection{Weakly spherically symmetric graphs}\label{section:symmetry}

In this subsection, we consider weakly spherically graphs over a non-Archimedean ordered field $\Bbb K$ as they were considered over the real field in \cite{KellerLenzWojciechowski} (see also \cite[Chapter~9]{KellerBook}).

Fix a reference vertex $o \in V$ which we refer to as a root. For $x \in V$, we define 
\begin{align*}
	b_{\pm}(x)=\sum_{y\in S_{|x|\pm 1}}b(x,y),
\end{align*}
where $|x|=d(x,o) $ is the combinatorial graph distance from $x$ to $o$ and the sphere of radius $ k \ge0$  about $ o $ is given by $ S_{k}=\{x\in V\mid |x|=k\} =B_{k+1}(o)\setminus B_{k}(o)$. 

A graph is called \emph{weakly spherically symmetric} with respect to  $ o \in V$
if  $ b_{\pm} $ are functions of the combinatorial graph distance, i.e., there are $\widetilde b_{\pm}:\mathbb{N}_{0}\to \Bbb K^+\cup\{0\}  $ such that $ b_{\pm}(x)=\widetilde b_{\pm}(|x|) $, $ x\in V $. With slight abuse of notation we do not distinguish between $ b_{\pm} $ and $ \widetilde b_{\pm} $ in notation.\\

Firstly, we formulate several useful results on convergence in non-Archimedean fields.
We refer to the following simple fact as ``a student's dream'', a term we picked up from a talk of Khodr Shamseddine\footnote{confer \href{https://youtu.be/T8yw7yn8zII?t=745}{https://youtu.be/T8yw7yn8zII?t=745} at minute 12:25}. 

\begin{proposition}[``A student's dream'' \cite{Clark}]\label{prop:studentsdream} In a Cauchy complete non-Archimedean field $\K$, a sequence $ (a_{n})$
converges to an element in $\K$ if and only if $ (a_{n+1}-a_{n}) $ converges to zero.  In particular, the series $ \sum_{n\ge 0} a_{n}$ converges if and only if $ (a_{n}) $ converges to zero.
\end{proposition}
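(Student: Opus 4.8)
The plan is to prove the equivalence by reducing convergence to the Cauchy property and then exploiting the existence of an infinitesimal. The forward direction is immediate: if $a_n \to a$, then the shifted sequence $(a_{n+1})$ also converges to $a$, so $a_{n+1}-a_n$ converges to $a-a=0$ by linearity of limits. So the content is the converse.

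For the converse, set $b_n := a_{n+1}-a_n$ and assume $b_n\to 0$. Since $\K$ is Cauchy complete, it suffices to show that $(a_n)$ is a Cauchy sequence, and I would argue by contradiction. If $(a_n)$ is not Cauchy, there is $c\in\K^+$ such that for every $N$ one finds indices $m>n\ge N$ with $|a_m-a_n|\succ c$. Iterating this, I would extract pairwise disjoint intervals $[n_k,m_k)\subset\mathbb N$ with $n_k\to\infty$ and $|a_{m_k}-a_{n_k}|\succ c$. Telescoping gives $a_{m_k}-a_{n_k}=\sum_{j=n_k}^{m_k-1}b_j$, so by the triangle inequality for the order absolute value, $\sum_{j=n_k}^{m_k-1}|b_j|\succ c$, a sum of $\ell_k:=m_k-n_k$ terms.

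Now the non-Archimedean hypothesis enters, and this is the one genuinely substantive step. Fix an infinitesimal $\tau\in\K^+$, so that $\tau\prec 1/n$ for every $n\in\mathbb N$; in particular $\ell_k\tau\prec 1$, hence $\ell_k\,c\tau\preceq c$. If every $j\in[n_k,m_k)$ satisfied $|b_j|\preceq c\tau$, the block sum would be $\preceq \ell_k\,c\tau\preceq c$, contradicting $\succ c$; hence for each $k$ there is $j_k\in[n_k,m_k)$ with $|b_{j_k}|\succ c\tau$. The indices $j_k$ are pairwise distinct and tend to infinity, while $c\tau\succ 0$ is a fixed positive element independent of $k$, so $b_n\not\to 0$ — a contradiction. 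Therefore $(a_n)$ is Cauchy, hence convergent. For the final assertion I would apply the equivalence to the partial sums $s_n=\sum_{k=0}^{n}a_k$: one has $s_{n+1}-s_n=a_{n+1}$, and a sequence converges to zero iff its shift does, so $\sum_n a_n$ converges iff $(s_n)$ converges iff $(a_n)$ converges to zero.

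The only real obstacle is the one the above isolates: the triangle-inequality estimate merely bounds the block sum by $\ell_k\max_j|b_j|$, and in a general ordered field the lengths $\ell_k$ can be arbitrarily large, so $c/\ell_k$ could be tiny. The proof works precisely because in a non-Archimedean field $c/\ell_k$ is still bounded below by the fixed positive quantity $c\tau$, since $\ell_k$ is a natural number and $\tau$ is infinitesimal. I would also note that only the infinitesimal and Cauchy completeness are used; countable cofinality, although part of the standing assumptions, plays no role in this lemma.
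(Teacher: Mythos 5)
Your proof is correct. The paper itself gives no proof of this proposition --- it is quoted from Clark--Diepeveen --- and your argument is the standard one, resting on exactly the right point: for an infinitesimal $\tau$ and any $n\in\mathbb N$ one has $n\tau\prec 1$, so a block of terms each $\preceq c\tau$ sums to something $\prec c$. You could streamline the contradiction into a direct Cauchy estimate (given $c\succ 0$, choose $N$ with $|b_j|\preceq c\tau$ for $j\ge N$; then $|a_m-a_n|\preceq (m-n)c\tau\prec c$ for $m>n\ge N$), but the logic is the same, and your closing remarks --- that only the infinitesimal and Cauchy completeness are used, not countable cofinality --- are accurate.
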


The  following lemma  concerns the convergence to $\infty$ and complements  ``a student's dream''.
\begin{lemma}\label{lem:convtoinfty}
Let $a_n\succ 0, n\in \Bbb N_0$, in a non-Archimedean field with countable cofinality. Then,
\begin{equation*}
\sum_{n=0}^\infty {a_n}=\infty
\end{equation*}
if and only if there exists a subsequence $(a_{n_k})$ with $a_{n_k} \to \infty$ as $k \to \infty$. \end{lemma}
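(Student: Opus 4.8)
\textbf{Proof plan for Lemma~\ref{lem:convtoinfty}.}

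The plan is to prove both implications directly from the definitions of convergence to $\infty$ and of countable cofinality, exploiting that all $a_n \succ 0$ so the partial sums $s_N = \sum_{n=0}^N a_n$ form an increasing sequence. For the ``if'' direction, suppose there is a subsequence $(a_{n_k})$ with $a_{n_k} \to \infty$. Given any $c \in \K$, I would first note that we may assume $c \succ 0$, and then pick $k$ with $a_{n_k} \succ c$; since all terms are positive, $s_N \succeq a_{n_k} \succ c$ for every $N \ge n_k$, so $s_N \to \infty$ by definition. This direction is essentially immediate and uses neither Cauchy completeness nor countable cofinality.

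The substance is the ``only if'' direction: assuming $\sum a_n = \infty$, produce a subsequence tending to $\infty$. Here I would argue by contraposition. Suppose no subsequence of $(a_n)$ tends to $\infty$; I claim the sequence $(a_n)$ is then \emph{bounded}, i.e.\ there is $M \in \K$ with $a_n \preceq M$ for all $n$. This is where countable cofinality enters: let $S = \{s_j : j \in \mathbb{N}\}$ be a countable cofinal subset of $\K$, which we may take to be increasing and positive. If $(a_n)$ were unbounded, then for each $j$ there is an index $n$ with $a_n \succ s_j$; choosing such indices greedily (and increasingly, which is possible since for each $j$ only finitely many $n$ can have been used so far while infinitely many must satisfy $a_n \succ s_j$, else $s_j$ would bound a cofinite subsequence and, combined with boundedness of the finite part, bound the whole sequence) yields a subsequence $(a_{n_k})$ with $a_{n_k} \succ s_k$. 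Since $S$ is cofinal, for any $c \in \K$ there is $j$ with $c \preceq s_j$, and then $a_{n_k} \succ s_k \succeq s_j \succeq c$ for all $k \ge j$; that is, $a_{n_k} \to \infty$, contradicting our assumption. Hence $(a_n)$ is bounded by some $M \in \K$.

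Now, with $(a_n)$ bounded by $M$ and no subsequence going to $\infty$, I want to conclude $\sum a_n \ne \infty$ — in fact I will show the partial sums are bounded. The key observation is that boundedness of the increasing sequence $(s_N)$ by itself does not force convergence in a non-Archimedean field, but it does prevent $s_N \to \infty$. Concretely, I would show directly that $(s_N)$ is bounded: this requires a bit more than just $a_n \preceq M$, since a bounded-term series can still diverge to $\infty$ if the bound is, say, a fixed non-infinitesimal. So the cleaner route is to strengthen the dichotomy: I claim that if $\sum a_n = \infty$ then in fact $(a_n)$ must be unbounded. Indeed, if $a_n \preceq M$ for all $n$ with $M$ non-infinitesimal is not yet a contradiction, so instead I use that $\sum a_n = \infty$ means $s_N$ exceeds \emph{every} element of $\K$ eventually; in particular $s_N$ exceeds $2M, 3M, \dots$. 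But then consider the ``student's dream'' philosophy in reverse: I would split the sum into blocks. Pick $N_1$ with $s_{N_1} \succ M$; then the block sum $\sum_{n=1}^{N_1} a_n \succ M$ forces, by the pigeonhole-type bound $\sum_{n \le N_1} a_n \preceq N_1 \max_{n \le N_1} a_n$, that $\max_{n\le N_1} a_n \succ M/N_1$. This is too weak to directly give unboundedness of $(a_n)$, so the genuinely correct approach is: use countable cofinality to get a strictly increasing cofinal sequence $(c_j)$, note $s_N \succ c_j$ eventually for each $j$, and build a subsequence of the $a_n$ realizing growth comparable to $c_{j+1} - c_j$... which again need not tend to $\infty$.

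\textbf{The main obstacle}, and the point I would think hardest about, is exactly this last gap: ruling out the scenario where the terms $a_n$ stay bounded (even bounded below away from $0$ by an infinitesimal, or oscillating) yet the partial sums march off to $\infty$ through ever-larger finite stages. Over $\mathbb{R}$ this is impossible only because bounded increasing sequences converge; over a non-Archimedean field with countable cofinality one must use that the cofinal sequence $(c_j)$ is \emph{countable} together with Cauchy completeness (student's dream, Proposition~\ref{prop:studentsdream}) to control the ``telescoping'' behavior. The clean argument I would ultimately commit to: for the ``only if'' direction, suppose every subsequence of $(a_n)$ is bounded above — equivalently $(a_n)$ is bounded, say $a_n \preceq M$. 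Then I would show $\sum a_n$ converges or is bounded by comparing with the cofinal sequence: were $s_N \to \infty$, choose $N_1 < N_2 < \cdots$ with $s_{N_{j+1}} - s_{N_j} \succ 1$ for all $j$ (possible since $s_N \to \infty$ means it eventually exceeds every integer); then within each block $(N_j, N_{j+1}]$ the term sums exceed $1$, and since each block has length $\ell_j := N_{j+1}-N_j$ with all terms $\preceq M$, we get $\ell_j \succeq 1/M$, forcing $\ell_j$ to be a genuine (non-infinitesimal, hence $\ge$ a fixed integer if $M$ is finite) — but $M$ could be infinitely large, in which case $1/M$ is infinitesimal and this says nothing. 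At that point the resolution is that if $M$ is infinitely large then $(a_n)$ having a term $\succ$ any given integer infinitely often... no. I would therefore present the argument in the contrapositive form of paragraph two as the rigorous core — \emph{unbounded subsequence $\Leftrightarrow$ divergence to $\infty$} — and handle the remaining direction by showing: \emph{bounded sequence of positive terms $\Rightarrow$ partial sums do not go to $\infty$}, which I would prove by contradiction using a strictly increasing \emph{countable} cofinal sequence $(c_j)_{j\ge 1}$ and Cauchy completeness to extract from the $a_n$'s a subsequence comparable to $(c_{j+1}-c_j)$, noting this latter difference sequence cannot converge to $0$ (else $(c_j)$ converges, contradicting cofinality in a field with no maximum), hence by student's dream $\sum(c_{j+1}-c_j)$, and with it a rearranged chunk of $\sum a_n$, and ultimately $(a_n)$ itself, is unbounded — the desired contradiction. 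I expect the write-up to hinge on getting this extraction and the non-infinitesimality bookkeeping exactly right.
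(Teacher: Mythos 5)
Your ``if'' direction and your treatment of the unbounded case (greedy extraction of a subsequence dominating an increasing countable cofinal sequence) are both correct and match the paper, which handles that case in exactly this way. The genuine gap is the case you yourself flag as ``the main obstacle'': showing that a \emph{bounded} sequence of positive terms cannot have partial sums tending to $\infty$. None of the routes you sketch for this (block decompositions, pigeonhole on block lengths, ``extracting a subsequence comparable to the cofinal differences $c_{j+1}-c_j$'') actually closes it, and you end by deferring the bookkeeping. Worse, the intuition driving your difficulty --- ``a bounded-term series can still diverge to $\infty$ if the bound is, say, a fixed non-infinitesimal'' --- is imported from $\mathbb{R}$ and is false in a non-Archimedean field; it is precisely what the non-Archimedean hypothesis rules out.

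The resolution is one line and uses nothing but the existence of an infinitely large element. If $a_n \preceq M$ for all $n$, then for every $N$
\begin{align*}
	\sum_{n=0}^{N} a_n \preceq (N+1)\,M \preceq \mathcal{N}\,M,
\end{align*}
where $\mathcal{N}$ is any fixed infinitely large element of $\K$, since $\mathcal{N}\succ N+1$ for every $N\in\mathbb{N}$. Thus all partial sums are bounded by the single field element $\mathcal{N}M$, so they can never exceed $\mathcal{N}M$ and by definition do not converge to $\infty$. (Compare Example~\ref{ex3}: the series $\sum_n 1$ does \emph{not} tend to $\infty$ over the Levi-Civita field.) This is exactly the paper's argument; note that neither Cauchy completeness nor the student's dream is needed for this half, and countable cofinality enters only in the unbounded case, which you already handle correctly.
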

\begin{proof}
The ``if'' direction is clear. Conversely if $ (a_{n}) $ is bounded, i.e. $a_n\preceq M$, we have $\sum_{n=0}^N {a_n}\preceq \mathcal N\cdot M$, where $\mathcal N$ is any infinitely large element. Thus, the partial sums cannot exceed any element and thus cannot diverge to infinity. In case that $ (a_{n}) $ is not bounded, by countable cofinality of the field $ \mathbb{K} $ we can extract a subsequence $(a_{n_{k}})$ which tends to $\infty $.
\end{proof}
We now come to the fundamental result for weakly spherical symmetric graphs.

\begin{theorem}[Weakly spherically symmetric graphs]\label{thm::weaksphersym}
	For a weakly spherically symmetric graph,  we have
	$$
	\P(o)=\left(\sum_{k=0}^{\infty}\dfrac{1}{b(\partial B_{k}(o))}\right)^{-1}	$$
	whenever either side exists where $ 1/\infty =0 $. In particular, we have the following equivalences:
	\begin{enumerate}		
		\item
		The graph has null capacity if and only if there exists a subsequence $(b_+(n_k))$ of $(b_+(n))$ with 
		$$
		\lim_{n_k\to \infty}b_{+}(n_k)=0.
		$$
		\item
		The graph has positive capacity if and only if $$ \lim_{n\to \infty}b_{+}(n)=\infty .$$
		\item
		The graph has divergent capacity  if and only if there exist $c, C\in \Bbb K^{+}$ such that 
		$$
		c\preceq b_{+}(n)
		$$
for all 	$n\in \Bbb N_0$ and 
$$
b_{+}(n)\preceq C
$$
for infinitely many $n\in \Bbb N_0$.
	\end{enumerate}
\end{theorem}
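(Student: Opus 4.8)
The plan is to reduce the theorem to the effective-resistance formula for weakly spherically symmetric graphs on finite balls and then pass to the limit using the convergence results just established. First I would analyze the Dirichlet problem \eqref{dirpr} on a ball $B_n(o)$ with pole at the root $o$. By the weak spherical symmetry, the solution $v$ is constant on each sphere $S_k$, $0\le k\le n-1$: this follows from uniqueness in Proposition~\ref{lemma::PLap} together with the observation that averaging $v$ over the symmetry (or, more elementarily, checking that a function depending only on $|x|$ solves \eqref{dirpr}) produces a solution; alternatively one checks directly that the harmonicity equations $\Delta v=0$ on $S_k$ for $1\le k\le n-1$ reduce, for a radial function $v(x)=\varphi(|x|)$, to the recursion $b_+(k)(\varphi(k)-\varphi(k+1))=b_-(k)(\varphi(k-1)-\varphi(k))$. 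Telescoping this recursion and using $b_-(k)=b_+(k-1)$ (since every edge from $S_{k-1}$ to $S_k$ is counted by both) yields that the ``current'' $I:=b_+(k)(\varphi(k)-\varphi(k+1))$ is independent of $k$, and summing $\varphi(k)-\varphi(k+1)=I/b_+(k)$ from $k=0$ to $n-1$ with the boundary conditions $\varphi(0)=1$, $\varphi(n)=0$ gives $I=\big(\sum_{k=0}^{n-1}1/b_+(k)\big)^{-1}$. Since $b_+(k)=b(\partial B_{k+1}(o))$ and $\P_n(o)=\Delta v(o)m(o)=I$ by Green's formula in Proposition~\ref{lemma::PLap}, we obtain
$$
\P_n(o)=\left(\sum_{k=0}^{n-1}\frac{1}{b_+(k)}\right)^{-1}.
$$

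Next I would pass to the limit $n\to\infty$. The partial sums $s_n:=\sum_{k=0}^{n-1}1/b_+(k)$ are positive and increasing, and $\P_n(o)=1/s_n$. The claim is that $\P(o)=\lim_n \P_n(o)$ exists (in $\K\cup\{\infty\}$, with the convention $1/\infty=0$) if and only if $\lim_n s_n$ exists in $\K\cup\{\infty\}$, and that the two limits are reciprocal. The direction from $s_n$ to $\P_n$ is routine: if $s_n\to s\in\K^+$ then $1/s_n\to 1/s$ (this needs $s\succ 0$, which holds since $s\succeq 1/b_+(0)\succ 0$), and if $s_n\to\infty$ then $1/s_n\to 0$ by definition of convergence to $\infty$. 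For the converse one notes that $s_n=1/\P_n(o)$ and runs the same argument with the roles reversed, using that $\P_n(o)$ is positive and, by Proposition~\ref{Thm:monotonicity}, bounded. This establishes the displayed formula $\P(o)=\big(\sum_{k}1/b(\partial B_k(o))\big)^{-1}$ whenever either side exists.

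Finally I would translate the three cases into conditions on $(b_+(n))$ using ``a student's dream'' (Proposition~\ref{prop:studentsdream}) and Lemma~\ref{lem:convtoinfty}. Null capacity means $\P(o)=0$, i.e. $s_n\to\infty$; by Lemma~\ref{lem:convtoinfty} applied to $a_n=1/b_+(n)\succ 0$ this is equivalent to the existence of a subsequence with $1/b_+(n_k)\to\infty$, which (since $1/b_+(n_k)\to\infty$ iff $b_+(n_k)\to 0$ for positive elements) is exactly (1). Positive capacity means $s_n$ converges to some $s\in\K^+$; by Proposition~\ref{prop:studentsdream} this is equivalent to $1/b_+(n)\to 0$, i.e. $b_+(n)\to\infty$, giving (2). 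Divergent capacity is then the complement of the other two within ``$\P(o)$ does not exist'': $s_n$ neither converges in $\K$ nor to $\infty$. Failure of $s_n\to\infty$ means, by Lemma~\ref{lem:convtoinfty}, that $a_n=1/b_+(n)$ is bounded, say $1/b_+(n)\preceq 1/c$, i.e. $b_+(n)\succeq c$ for all $n$; failure of convergence of $s_n$ means, by Proposition~\ref{prop:studentsdream}, that $1/b_+(n)\not\to 0$, which for a bounded-below sequence means $1/b_+(n)\succeq 1/C$ for infinitely many $n$ and some $C\in\K^+$, i.e. $b_+(n)\preceq C$ infinitely often; this is (3). The main obstacle I anticipate is not any single deep step but getting the equivalence in (3) cleanly from the negations — in particular being careful that ``$1/b_+(n)$ does not converge to $0$'' is correctly unpacked as ``there is a positive $c'$ with $1/b_+(n)\succeq c'$ for infinitely many $n$'', which uses that the field has countable cofinality only implicitly through Lemma~\ref{lem:convtoinfty}; the reciprocation arguments relating the limits of $s_n$ and $\P_n(o)$ also require a little care about the case distinction $s\in\K^+$ versus $s=\infty$.
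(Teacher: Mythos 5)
Your overall architecture matches the paper's: exhibit an explicit radial solution of the Dirichlet problem on $B_n(o)$, read off $\P_n(o)$, pass to the limit, and then translate the three cases via Proposition~\ref{prop:studentsdream} and Lemma~\ref{lem:convtoinfty}. However, there is a concrete error in the finite-ball computation. For a weakly spherically symmetric graph the spheres need not be singletons, and the two identities you invoke, $b_-(k)=b_+(k-1)$ and $b_+(k)=b(\partial B_{k+1}(o))$, are both false in general: the correct relations are $\#S_k\, b_-(k)=\#S_{k-1}\, b_+(k-1)$ (both sides equal the total weight of edges between $S_{k-1}$ and $S_k$) and $b(\partial B_{k+1}(o))=\#S_k\, b_+(k)$. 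Consequently the per-vertex quantity $b_+(k)(\varphi(k)-\varphi(k+1))$ is \emph{not} independent of $k$; what telescopes is the total current $\#S_k\, b_+(k)(\varphi(k)-\varphi(k+1))$. Your intermediate formula $\P_n(o)=\bigl(\sum_{k=0}^{n-1}1/b_+(k)\bigr)^{-1}$ is therefore wrong whenever $\#S_k\not\equiv 1$ (e.g.\ on a spherically symmetric tree); the two errors happen to cancel, so your final formula in terms of the boundary weights, $\P_n(o)=\bigl(\sum_{k=0}^{n-1}1/b(\partial B_{k+1}(o))\bigr)^{-1}$, is the correct one and agrees with the paper.

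Two further points. First, once the factor $\#S_k$ is reinstated, the equivalences (1)--(3), which are stated in terms of $b_+(n)$ rather than $b(\partial B_{n+1}(o))=\#S_n\, b_+(n)$, require the additional observation that $b_+(k)\preceq b(\partial B_{k+1}(o))\preceq \mathcal{N}\, b_+(k)$ for any infinitely large $\mathcal{N}\in\K^+$, so that convergence to $0$, to $\infty$, and two-sided boundedness transfer between the two sequences; you currently get this for free only because of the erroneous identification, and the paper makes this comparison explicit. Second, the suggested justification of radiality by ``averaging over the symmetry'' is not available here: weak spherical symmetry is a condition on $b_\pm$ alone and provides no group action. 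Your alternative --- writing down the explicit radial solution and invoking uniqueness from Proposition~\ref{lemma::PLap} (equivalently, Lemma~\ref{rem::modDprFin} for the renormalized problem, which is what the paper uses) --- is the right route. The limit-passing argument relating $\lim_n s_n$ and $\lim_n 1/s_n$ and the unpacking of the negations in case (3) are correct as written.
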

\begin{proof}
	Observe that on the ball $ B_{n} (o)$ the function
	\begin{align*}
		\widetilde v(x)= m(o)\sum_{k=|x|}^{n-1}\dfrac{1}{b_{+}(k)\#S_{k}}= m(o)\sum_{k=|x|}^{n-1}\dfrac{1}{b(\partial B_{k+1}(o))}
	\end{align*}
 solves the Dirichlet problem \eqref{dirprmod} since $\#S_{k} b_{+}(k) =\#S_{k+1}b_{-} (k+1)$ by weak spherical symmetry. Hence, by 
	Lemma~\ref{rem::modDprFin} we infer
	\begin{align*}
		\P_{n}(o)=\left(\sum_{k=1}^{n-1}\dfrac{1}{b(\partial B_{k}(o))}\right)^{-1}.
	\end{align*}
	Taking the limit $ n\to\infty $ yields the first statement. The equivalences now follow  immediately due to Proposition \ref{prop:studentsdream}, Lemma \ref{lem:convtoinfty}, and the estimate $b_+(k) \preceq b(\partial B_{k+1}(o)) \preceq \mathcal{N}b_+(k)$ valid for any $k \in \mathbb{N}$ and infinitely large element $\mathcal{N} \in \K^+$. 
\end{proof}

\begin{remark}
	Observe that the theorem shows that the combinatorial structure of a weakly spherically  graph is completely irrelevant for the capacity type.  Indeed, the capacity type is determined by the weights alone.
\end{remark}

\subsection{Graphs over the Levi-Civita field}
In the examples below we discuss the Levi-Civita field $\R$ which satisfies all our requirements (it is ordered, non-Archimedean, Cauchy complete in order topology and has countable cofinality). Moreover, it is real-closed.
It was  introduced by Tullio Levi-Civita in 1862 \cite{LeviCivita}, see e. g.  \cite{ShamseddineBerz}, \cite{Shamseddinethesis} for more details.

\begin{definition}[Levi-Civita field \cite{LeviCivita}]\label{LCfield}
The \emph{Levi-Civita field} $\mathcal R$ is defined as the field of formal power series
\begin{equation*}
a=\sum_{i=0}^\infty a_i\epsilon^{q_i}
\end{equation*}
of the variable $ \epsilon $
with coefficients $a_i\in \mathbb R$, $ a_{0}\neq 0 $, and a strictly increasing unbounded sequence $q_{i}\in \mathbb Q$, $ i\in\mathbb{N}_{0} $.  
\end{definition}
Addition and multiplication are defined naturally as for formal power series. The order is defined as follows: $a\succ 0$ if $a_0>0$.
The Levi-Civita field contains a subfield isomorphic to field of rational functions of one variable $\Bbb R(r)$, which arises naturally in theory of electrical networks see \cite{Muranova1}, \cite{Muranova2}. 

Let us consider \emph{path graphs} over the Levi-Civita field $\mathcal R$, i.e., $V=\mathbb N_0$ and $b(i,j)\ne 0$ if and only if $|i-j|=1$, and $m:V\to \Bbb K^+$ is arbitrary, see Figure~\ref{Fig1}. These are the simplest examples of weakly spherically symmetric graphs.
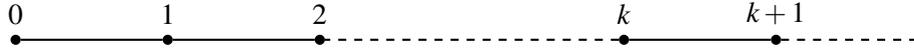
\begin{figure}[H]
\centering
\begin{tikzpicture}[auto,node distance=2cm,
                    thick,main node/.style={circle, draw, fill=black!100,
                        inner sep=0pt, minimum width=3pt}]

  \node[main node] (1) [label={[above]$0$}]{};
  \node[main node] (2) [right of=1,label={[above]$1$}] {};
  \node[main node] (3) [right of=2,label={[above]$2$}] {};
  \node[draw=none] (4) [right of=3,label={[above]}] {};
  \node[main node] (5) [right of=4,label={[above]$k$}] {};
  \node[main node] (6) [right of=5,label={[above]$k+1$}] {};
  \node[draw=none] (7) [right of=6,label={[above]}] {};

  \path[every node/.style={font=\sffamily\small}]
    (2) edge node [bend left] {} (1)
    (6) edge node [bend left] {} (5)
    (3) edge node [bend right] {} (2);

 \draw[dashed] (5) to (3);
 \draw[dashed] (7) to (6);

\end{tikzpicture}
\caption{Path graph}
\label{Fig1}
\end{figure}

\begin{example}[Null capacity]\label{ex1}
If $b(k,k+1)=\epsilon^{k}\in \R$ for any $k\in \mathbb N_0$, then
due to Theorem~\ref{thm::weaksphersym} above or the series law, cf. \cite{Muranova2}, we have 
$$
\lim_{n\to \infty}\P_n(0)=\lim_{n\to \infty}\left(\sum_{k=0}^{n-1}\epsilon^{-k}\right)^{-1}=\lim_{n\to \infty}\epsilon^{n-1}\left(\sum_{k=0}^{n-1}\epsilon^{k}\right)^{-1}=0.
$$
Therefore, the graph has null capacity.
\end{example}
\begin{example}[Positive capacity]\label{ex2}
If $b(k,k+1)=\epsilon^{-k}\in \R$ for any $k\in \mathbb N_0$, then 
$$
\lim_{n\to \infty}\P_n(0)=\lim_{n\to \infty}\left(\sum_{k=0}^{n-1}\epsilon^{k}\right)^{-1}=\left(\sum_{k=0}^\infty\epsilon^k\right)^{-1}=1-\epsilon.
$$
Therefore, the graph has positive capacity.
\end{example}
\begin{example}[Divergent capacity]\label{ex3}
If $b(k,k+1)=1$ for any $k\in \mathbb N_0$, then the limit $\P_n(0)=1/n$ does not exists  for $n\to \infty$ over $\R$ and the graph has divergent capacity.
\end{example}

The last example is indeed symptomatic for graphs with rational weights. It turns out that such graphs over a non-Archimedean field  always have divergent capacity, cf.~Corollary~\ref{lem:BoundedWeights} below.

\subsection{Relation to real weighted graphs}
In this section, we study the relationship between capacity types of graphs over the Levi--Civita field $\mathcal{R} $ and sequences of graphs over the real field $\mathbb{R}$. More precisely, consider a graph $b$ over $\R$  which is given by power series which simultaneously converge for small $r > 0$.  Replacing the formal parameter $\varepsilon$ in Definition~\ref{LCfield} by small $r > 0$, we obtain a family of weights $b_{r}$  over $\mathbb{R}$ which are all finite by the assumption that the power series converge for small $ r>0 $. Moreover, for each pair of vertices $ x,y \in V$ there is also $ r_{x,y}>0 $ such that $ b_{r}(x,y)\ge0 $ for $r < r_{x,y}$ by $ b(x,y)\succeq 0 $. If there is an $ r_{0}>0 $ such that $  r_{x,y} \ge r_{0}$ for all $ x,y $, then $ b_{r} $ is a graph over $ \mathbb{R} $ for all small $r >0$. In this case, we denote by $ \mathrm{cap}^{\mathbb{R}}_{r}(a) $ the corresponding classical notion of  capacity of a vertex $ a\in V $  for the graph $ b_{r} $ over $ \mathbb{R} $ .

A natural question is in which sense the capacity type of the graphs $b_r$ over $\mathbb R$ relates to the capacity type of the graph $b$ over the Levi--Civita field $\R$.
The next two examples show that the capacity can behave somewhat differently.
\begin{example}\label{ex8}
	The path graph $\Bbb N_0$ with $b(k,k+1)=k! \epsilon^k$ has null capacity over the Levi-Civita field $\mathcal{R} $ by Theorem~\ref{thm::weaksphersym}. However, for the graph over reals with the weights $b_r(k,k+1)=k! r^k$, $r >0$, we obtain 
	$$
	\operatorname{cap}^{\mathbb{R}}_{r} (0)=\left(\sum_{k=0}^\infty  \dfrac{1}{k! r^k}\right)^{-1}=e^{-\frac1r}.
	$$
	That is, the graph over $ \mathbb R $ has positive capacity for all $r>0$. A handwaving justification of this phenomena is that $ \epsilon^{-k} $ in $ \mathcal{R} $ is ``significantly larger'' than $ r^{-k} $ in $ \mathbb{R} $. Thus, the series with parameter $ \epsilon $ diverges in $ \mathcal{R} $   but converges in $ \mathbb{R} $ with the parameter $ r $. 
\end{example}

\begin{example}\label{ex9}
	The path graph $\Bbb N_0$ with $b(k,k+1)= {\epsilon^{-k}}/{k!}$ has positive capacity over the Levi-Civita field $\R$ by Theorem~\ref{thm::weaksphersym}. 
	But if we consider the graph over reals with the weights $b_{r}(k,k+1)=1/\left(k! r^k\right)$, $r >0$, we get the capacity
	$$
	\operatorname {cap}^{\mathbb{R}}_{r}(0)=\left(\sum_{k=0}^\infty  {k!r^k }\right)^{-1}=0.
	$$
	That is, the graph has null capacity for all $r>0$. The phenomena is similar to the one above. While the power series $ \sum_{k} k!r^{k}$ diverges in $ \mathbb{R} $ the corresponding power series  $ \sum_{k} k!\epsilon^{k}$ converges in $ \mathcal{R} $.
\end{example}

 In Theorem \ref{Thm:realvsnA} below we prove that the limiting behavior of the capacity with respect  to a variable $r\in \Bbb R$ can be related to capacity over the Levi-Civita field $\R$, as long as the edge weights are rational functions of $r$. More precisely, null capacity over $\R$ leads to $\operatorname{cap}^{\Bbb R}_{r}(a)\to 0$ as $r\to 0$ for $ a\in V $. Note, that the field of rational functions $\Bbb R(r)$ is isomorphic to a subfield of the Levi-Civita field $\mathcal R$ with $\epsilon:=r$.  The corresponding order in $\Bbb R(r)$ is the following: for any $g(r)\in \Bbb R(r)$ we can write
$$
g(r)=\dfrac{a_1 r^{n_1}+\dots+a_k r^{n_k}}{b_1 r^{m_1}+\dots+b_s  r^{m_s}},
$$
where powers of $r$ are ordered from smaller to larger in both numerator and denominator and $a_1\ne 0, b_1\ne 0$. Then $$ g(r)\succ 0\mbox{ if and only if }a_1/b_1>0. $$

\begin{theorem}\label{Thm:realvsnA}
	Let $b$ be a graph over the field of rational functions $ \Bbb R(r)\subset \mathcal R$ with null capacity over $\R$ such that  the corresponding $ b_{r} $ give rise to a graph with real weights for small $ r>0 $. Then,  for any $a\in V$ and for any $n\in \Bbb N\cup\{0\}$ we have
	$$
	\lim_{r\to +0} r^{-n}\operatorname{cap}^{\Bbb R}_{r} (a)=0,
	$$
where the capacity $ \mathrm{cap}^{\Bbb R}_{r}$  is taken in the usual sense in $\Bbb R$ and depends on $r\in \Bbb R$.
\end{theorem}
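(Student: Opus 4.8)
The plan is to bound the real capacity $\operatorname{cap}^{\Bbb R}_{r}(a)$ from above by a finite-ball capacity, to exploit that — since the weights $b(x,y)$ and $m(x)$ lie in $\Bbb R(r)\subset\R$ — such a finite-ball capacity is an honest rational function of $r$, and to read off its order of vanishing at $r=0$ from the hypothesis of null capacity over $\R$. Fix $a\in V$ and $n\in\Bbb N\cup\{0\}$, and for finite connected $K\ni a$ write $\operatorname{cap}^{\Bbb R}_{r,K}(a)$ for the effective capacity of $a$ in $K$ computed with the real weights $b_{r}$. Since each ball $B_{N}(a)$ is connected as an induced subgraph (every geodesic from $a$ stays inside it) and the balls exhaust $V$, the classical real theory gives $\operatorname{cap}^{\Bbb R}_{r}(a)=\lim_{N\to\infty}\operatorname{cap}^{\Bbb R}_{r,B_{N}(a)}(a)$, a decreasing limit of nonnegative numbers; in particular $0\le r^{-n}\operatorname{cap}^{\Bbb R}_{r}(a)\le r^{-n}\operatorname{cap}^{\Bbb R}_{r,B_{N}(a)}(a)$ for every $N$, so it suffices to find one index $N=N(n)$ with $r^{-n}\operatorname{cap}^{\Bbb R}_{r,B_{N}(a)}(a)\to 0$ as $r\to+0$ and then squeeze.

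Next I would make the rational dependence on $r$ explicit. By Lemma~\ref{rem::modDprFin}, $\P_{B_{N}(a)}(a)=m(a)/\widetilde v(a)$ with $\widetilde v=\Delta_{B_{N}(a)}^{-1}1_{a}$, where $\Delta_{B_{N}(a)}$ is a finite square matrix whose entries are built from the values $b(x,y)$ and $m(x)$ and hence lie in $\Bbb R(r)$; it is invertible over $\R\supseteq\Bbb R(r)$ by Lemma~\ref{rem::modDprFin}, so its determinant is a nonzero element of $\Bbb R(r)$, and by Cramer's rule $\widetilde v$, and therefore $g_{N}:=\P_{B_{N}(a)}(a)$, is a well-defined element of $\Bbb R(r)$, nonzero because an effective capacity is strictly positive by Proposition~\ref{lemma::PLap}. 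By hypothesis there is $r_{0}>0$ so that for $0<r<r_{0}$ the weights $b_{r}$ form a finite connected real graph on $B_{N}(a)$ with the same combinatorial structure as $b$; its real Dirichlet Laplacian equals the evaluation of $\Delta_{B_{N}(a)}$ at $r$, whose determinant is the value of a nonzero rational function and hence is nonzero for $0<r<r_{0}$ outside a finite set. Since on that range the evaluation map commutes with inverting the matrix and with forming $m(a)/\widetilde v(a)$, we conclude $\operatorname{cap}^{\Bbb R}_{r,B_{N}(a)}(a)=g_{N}(r)$ for all sufficiently small $r>0$.

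Finally I would choose $N$ using null capacity. By Definition~\ref{Def::type} (and Theorem~\ref{Thm::ind}) null capacity over $\R$ means $\P(a)=\lim_{N\to\infty}g_{N}=0$ in the order topology of $\R$, so applying the definition of convergence with the positive gauge $\epsilon^{\,n+1}\in\R^{+}$ produces $N=N(n)$ with $0\prec g_{N}\preceq\epsilon^{\,n+1}$. Writing the rational function $g_{N}(r)=r^{\lambda}h(r)$ with $\lambda\in\Bbb Z$ and $h$ rational, regular and nonzero at $0$, the embedding $\Bbb R(r)\hookrightarrow\R$ sends $g_{N}$ to $h(0)\epsilon^{\lambda}+(\text{higher order})$; by the order description of $\Bbb R(r)$ recalled before the theorem, $0\prec g_{N}$ forces $h(0)>0$ and $g_{N}\preceq\epsilon^{\,n+1}$ forces $\lambda\ge n+1$. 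Hence, as $r\to+0$,
\[
r^{-n}\operatorname{cap}^{\Bbb R}_{r,B_{N}(a)}(a)=r^{-n}g_{N}(r)=h(r)\,r^{\lambda-n}\longrightarrow 0,
\]
since $\lambda-n\ge 1$ and $h(r)\to h(0)$, and together with the squeeze from the first paragraph this yields $\lim_{r\to+0}r^{-n}\operatorname{cap}^{\Bbb R}_{r}(a)=0$.

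The step I expect to be the main obstacle is the compatibility asserted in the second paragraph: one must verify carefully that ``solve the finite Dirichlet problem over $\Bbb R(r)$ and then substitute $r$'' agrees with ``substitute $r$ and then solve the finite Dirichlet problem over $\Bbb R$'', which comes down to the relevant determinant staying nonzero for small $r>0$ — precisely what the hypothesis that $b_{r}$ is a genuine real graph for $0<r<r_{0}$ guarantees. The other ingredients are routine: monotonicity and exhaustion-independence of the real capacity on the one hand, and reading off the $r$-valuation of a rational function from the order of $\Bbb R(r)\subset\R$ on the other.
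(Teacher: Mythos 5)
Your proof is correct and follows essentially the same route as the paper's: pick a finite set with $\R$-capacity $\preceq\epsilon^{n+1}$ using null capacity, exploit that this finite capacity is a rational function of the parameter and read off its behaviour for small real $r>0$ from the order on $\Bbb R(r)$, then conclude via monotonicity of the capacity in the exhaustion. Your treatment is somewhat more explicit about why specializing $r$ commutes with solving the finite Dirichlet problem (a point the paper's proof passes over quickly), but this is a refinement of the same argument rather than a different one.
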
 
\begin{proof} Let $ n\in\mathbb{N} $ be fixed. 
	By null capacity  in the Levi-Civita field we know that there exists a finite set $K\subset V$ with $a\in K$ such that for the capacity $ \mathrm{cap}^{\mathcal{R}}_{K} $ within $ \mathcal{R} $
	\begin{equation*}\label{Psucceps}
		\P_K^{\mathcal{R}}(a)\prec \epsilon^{n+1}.
	\end{equation*}
	Since by definition in this case $\P_K^{\mathcal{R}}(a)$ is a rational function on $\epsilon$, we can write
	$$
	\epsilon^{n+1} - \P_K^{\mathcal{R}}(a)=\dfrac{a_1 \epsilon^{n_1}+\dots+a_k \epsilon^{n_k}}{b_1 \epsilon^{m_1}+\dots+b_s\epsilon^{m_s}},
	$$
	where the powers are increasing, i.e., $n_1<\dots <n_k$  and $m_1<\dots <m_s$. Furthermore, from $ \P_K^{\mathcal{R}}(a)\prec \epsilon^{n+1} $ we conclude that $a_1/b_1>0$ and without loss of generality $ a_{1},b_{1} >0 $. Therefore, there exists $r_0\in \Bbb R^+$ such that for any $0<r<r_0$
	$$
	\dfrac{a_1 r^{n_1}+\dots+a_k r^{n_k}}{b_1 r^{m_1}+\dots+b_s r^{m_s}}>0,
	$$
	i.e.,
	$
	\operatorname {cap}_{K}(a)<r^{n+1},
	$
	which gives as the capacity is decreasing in $ K $
	\begin{equation*}\label{finiteApproxNumerics}
		r^{-n}\mathrm{cap}^{\Bbb R}_{r}(a)\leq r^{-n}\operatorname {cap}^{\Bbb R}_{K,r}(a)<r.
	\end{equation*}
	Taking the limit $ r\to 0 $ yields the statement.
\end{proof}

\begin{remark}
	If we have a closer look on the above proof, we observe that for fixed $n\in \Bbb N$ we get by $ \P_K^{\mathcal{R}}(a)\prec \epsilon^{n+1} $ the approximation $K$ such that $\operatorname {cap}^{\Bbb R}_{K,r}(a)<r^{n+1}$ for small $r\in \Bbb R$. That is we can construct finite graphs with a very small energy, keeping control over the number of vertices. This could be useful for numerical simulations on real recurrent graphs.
\end{remark}

\section{Characterizations of the capacity}\label{s:type}
In Section~\ref{sec:cap} we defined the capacity of an infinite graph as the charge of a vertex. In this section we show that the capacity can be also obtained as an infimum of energies of functions normalized at a vertex and as the reciprocal of the Green function. Moreover, we prove Theorem~\ref{Thm::ind} in Section~\ref{ss:indep} which shows that null, positive and divergent capacity does not depend on the vertex and is therefore a global property. 

\subsection{Capacity as an infimum of energies} 
The first theorem shows that the effective capacity can be obtained as the infimum of energies.
Recall that, given a subset $A \subset \K$ of an ordered field $(\K, \succ)$, an infimum is a lower bound $a$ for $A$ such that $a \succeq a'$ for every other lower bound $a'$. In a general ordered field $(\K, \succ)$, a lower bounded set $A$ does not necessarily have an infimum. However, if the infimum exists, it is unique and we denote it by $\inf A$.

Recall that we consider graphs $(V,b,m)$ over non-Archimedean ordered fields $(\K, \succ)$ which satisfy Assumption~\ref{a:field} and Assumption~\ref{a:graph}. Moreover, recall that $\P_n$ denotes the capacity with respect to distance balls of radius $n$.
\begin{theorem}\label{Lem::inf}
For  $a\in V$, we have
$$
\P(a) = \inf \{Q(\phi)\;\mid\;\phi\in C_c(V), \phi(a)=1\}
$$
whenever either side exists.
\end{theorem}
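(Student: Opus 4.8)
The plan is to compare the set $A := \{Q(\phi) \mid \phi\in C_c(V),\ \phi(a)=1\}$ directly with the set $\{\P_n(a)\mid n\in\mathbb N\}$ of finite approximations, show that these two subsets of $\K$ have exactly the same lower bounds, and then link the infimum of the decreasing sequence $(\P_n(a))$ to its limit. From the first point the equality of infima (whenever either exists) is immediate, and the second point is the only place where order-topological convergence enters.

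First I would record that each $\P_n(a)$ lies in $A$: applying Proposition~\ref{lemma::PLap} to $K=B_n(a)$ (finite by local finiteness), the solution $v_n$ of \eqref{dirpr} belongs to $C_c(V)$, satisfies $v_n(a)=1$, and has $Q(v_n)=\Delta v_n(a)m(a)=\P_n(a)$. Conversely, given $\phi\in C_c(V)$ with $\phi(a)=1$, connectedness and local finiteness ensure that the balls $B_n(a)$ are finite and exhaust $V$, so $\supp\phi\subseteq B_n(a)$ for some $n$; the energy-minimality clause of Proposition~\ref{lemma::PLap} (with $K=B_n(a)$, $f=\phi$) then gives $Q(\phi)\succeq Q(v_n)=\P_n(a)$. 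Combining the two: since $\{\P_n(a)\}\subseteq A$, every lower bound of $A$ is a lower bound of $\{\P_n(a)\}$; and if $\ell\preceq\P_n(a)$ for all $n$, then $\ell\preceq\P_n(a)\preceq Q(\phi)$ for every admissible $\phi$, so $\ell$ is a lower bound of $A$. Hence $A$ and $\{\P_n(a)\}$ have the same lower bounds, so one has an infimum iff the other does, and then the infima coincide.

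It then remains to show that the decreasing sequence $(\P_n(a))$ (Proposition~\ref{Thm:monotonicity}) converges in the order topology if and only if $\inf\{\P_n(a)\}$ exists, with limit equal to that infimum. For the forward direction one uses the elementary observation that a sequence converging to $L$ whose terms decrease must have all terms $\succeq L$ and must have $L$ as greatest lower bound; both are checked by taking the convergence tolerance $c$ to be half of a putative gap and contradicting convergence. For the converse, writing $y_n:=\P_n(a)-I\succeq 0$ where $I:=\inf\{\P_n(a)\}$, one has $\inf\{y_n\}=0$, so for any $c\succ 0$ the element $c$ fails to be a lower bound of $\{y_n\}$, giving $N$ with $y_N\prec c$; monotonicity forces $0\preceq y_n\preceq y_N\prec c$ for all $n\ge N$, whence $\P_n(a)\to I$ and $\P(a)$ exists. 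Chaining the equivalences yields $\P(a)=\inf\{\P_n(a)\}=\inf A$ whenever either side exists.

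I do not expect a genuine obstacle; the single point requiring care — and the conceptual heart of the statement — is that in a general ordered field a bounded decreasing sequence need not converge (this is precisely the divergent-capacity phenomenon), so the convergence of $(\P_n(a))$ must be extracted from the \emph{existence} of the infimum, not merely from boundedness. It is worth noting that this argument uses neither Cauchy completeness nor countable cofinality of $\K$: only the order structure together with Propositions~\ref{lemma::PLap} and~\ref{Thm:monotonicity} is needed.
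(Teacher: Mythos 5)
Your proposal is correct and follows essentially the same route as the paper: both rest on the fact that $\P_n(a)$ is the minimum of $Q$ over functions supported in $B_n(a)$ (Proposition~\ref{lemma::PLap}), that every $\phi\in C_c(V)$ is supported in some ball, and on the order-topological link between the limit and the infimum of the decreasing sequence $(\P_n(a))$. Your factorization through the intermediate statement that $A$ and $\{\P_n(a)\mid n\in\mathbb N\}$ have the same lower bounds, followed by the general ``decreasing sequence converges iff its infimum exists'' lemma, is merely a cleaner repackaging of the paper's direct $\tau$-argument, and your closing observation that neither Cauchy completeness nor countable cofinality is needed here is accurate.
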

\begin{proof}
By Proposition~\ref{lemma::PLap} we have
\begin{align*}
	\P_{n}(a)=\min \{Q(\phi)\;\mid\;\phi\in C_c(B_{n}(a)), \phi(a)=1\}.
\end{align*}	
We denote the set $E:= \{Q(\phi)\;\mid\;\phi\in C_c(V), \phi(a)=1\} $.

Assume the infimum $c=\inf E $ exists. For $ \tau\succ 0  $, let $ \phi_{\tau} $ in $ C_c(V) $  and $ \phi_{\tau} (a)=1 $  be such that
\begin{align*}
	|Q(\phi_{\tau})-c|\preceq \tau.
\end{align*}
Then there exists $ n(\tau) $ such that $ \phi_{\tau}\in C_c(B_{n(\tau)}(a)) $. Hence, for all $ n\ge n(\tau) $,
\begin{align*}
	c\preceq \P_{n}(a)\preceq Q(\phi_\tau)\preceq c+\tau.
\end{align*}
This shows that $ \P_{n}(a)$ converges to $c$ as $n\to \infty$.

On the other hand, assume $c=\lim_{n\to\infty}\P_{n}(a) $ exists. Then, $ c $ is a lower bound on $ E $ since every $ \phi \in C_c(V)$ is supported in some ball. Assume now that there exists a lower bound $d \succ c$ on $ E $. Then there exists  $ n \in \mathbb N $ such that $ d \succ \P_{n}(a) \succeq c$ which means there exists $ \phi_{n}\in C_{c}( B_{n}(a)) $ such that $ d \succ \P_{n}(a) =Q(\phi_{n}) $, which is a contradiction.
\end{proof}

\subsection{Independence of null capacity on the vertex}
In this subsection, we show that if the capacity is zero for one vertex, then it is zero for all vertices. 
We start with the following lemma.

\begin{lemma}\label{lem:FTC} Let $ x,y\in V $. Then, there is a constant $ C_{x,y} $ such that for all $ \phi \in C_c(V)$ 
	\begin{align*}
		|\phi(x)-\phi(y)|^2&\preceq C_{x,y} Q(\phi).
	\end{align*}	
\end{lemma}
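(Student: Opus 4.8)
The plan is to exploit connectivity: fix a path $x = z_0 \sim z_1 \sim \dots \sim z_k = y$ in $V$, which exists by Assumption~\ref{a:graph}. Writing $\phi(x) - \phi(y)$ as a telescoping sum $\sum_{i=1}^{k} (\phi(z_{i-1}) - \phi(z_i)) = -\sum_{i=1}^{k} \nabla_{z_{i-1} z_i}\phi$, I would first control a single difference $|\nabla_{z_{i-1} z_i}\phi|^2$ by the energy. Since $z_{i-1} \sim z_i$ we have $b(z_{i-1}, z_i) \succ 0$, and the definition of $Q(\phi)$ as a sum of nonnegative terms $\frac12\sum_{u,v}(\nabla_{uv}\phi)^2 b(u,v)$ (each term is a square times a positive weight, hence $\succeq 0$ in the ordered field) gives immediately
$$
(\nabla_{z_{i-1} z_i}\phi)^2 \, b(z_{i-1}, z_i) \preceq Q(\phi),
$$
so $(\nabla_{z_{i-1} z_i}\phi)^2 \preceq b(z_{i-1}, z_i)^{-1} Q(\phi)$. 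This is the "fundamental theorem of calculus" step that the lemma's label alludes to.

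Next I would combine the $k$ edge-estimates. The elementary inequality $(\sum_{i=1}^k a_i)^2 \preceq k \sum_{i=1}^k a_i^2$ — which holds in any ordered field, being equivalent to $\sum_{i<j}(a_i - a_j)^2 \succeq 0$, a sum of squares — applied to $a_i = \nabla_{z_{i-1}z_i}\phi$ yields
$$
|\phi(x) - \phi(y)|^2 = \Bigl(\sum_{i=1}^k \nabla_{z_{i-1} z_i}\phi\Bigr)^2 \preceq k \sum_{i=1}^k (\nabla_{z_{i-1} z_i}\phi)^2 \preceq k \Bigl(\sum_{i=1}^k \frac{1}{b(z_{i-1}, z_i)}\Bigr) Q(\phi).
$$
Thus one may take $C_{x,y} := k \sum_{i=1}^k b(z_{i-1}, z_i)^{-1}$, which is a well-defined element of $\K^+$ depending only on the chosen path between $x$ and $y$, not on $\phi$. (One could optimize over paths, but for the statement any fixed path suffices.)

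I do not expect a genuine obstacle here; the only points requiring a little care are purely order-theoretic rather than analytic. One must check that the standard real-variable manipulations — nonnegativity of $Q(\phi)$, dropping nonnegative terms from a finite sum, the Cauchy–Schwarz-type bound $(\sum a_i)^2 \preceq k\sum a_i^2$ — are all consequences of the ordered-field axioms alone and do not invoke completeness, the Archimedean property, or square roots. Each of these reduces to "a finite sum of squares (times positive weights) is $\succeq 0$", which is valid in every ordered field. So the proof is a short telescoping-plus-Cauchy–Schwarz argument, and the "hard part", such as it is, is merely bookkeeping the constant $C_{x,y}$ and noting its independence of $\phi$.
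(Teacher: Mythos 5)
Your proof is correct and follows essentially the same route as the paper: telescope $\phi(x)-\phi(y)$ along a fixed path, apply the ordered-field identity $k\sum a_i^2-(\sum a_i)^2=\sum_{i<j}(a_i-a_j)^2\succeq 0$, and bound each edge term by $Q(\phi)$ using positivity of the remaining summands. The only difference is cosmetic: the paper's constant is $2n/\min_i b(x_{i-1},x_i)$ while yours is $k\sum_i b(z_{i-1},z_i)^{-1}$, and your exact telescoping even spares the preliminary triangle-inequality step.
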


\begin{proof}
	Let $x=x_0\sim x_1\sim x_2\sim\cdots\sim x_{n-1}\sim x_n=y$ be a  path connecting between $x$ and $y$. Then by triangle inequality we have for all compactly supported functions $\phi \in C_c(V) $
	\begin{align*}
		|\phi (x)- \phi(y)|^2&\preceq \left(\sum_{i=1}^n|\phi(x_{i-1})-\phi(x_i)|\right)^2\\
	&\preceq 	\frac{n}{\min\limits_{i=1,\ldots,n} b(x_i, x_{i-1})} \sum_{i=1}^n |\phi (x_i)- \phi(x_{i-1})|^2 b(x_i, x_{i-1})\\
		&\preceq 2\left(\frac{n}{\min\limits_{i=1,\ldots,n} b(x_i, x_{i-1})}\right)Q(\phi).
	\end{align*}
This finishes the proof.
\end{proof}

\begin{corollary}[Null capacity does not depend on the vertex]\label{Lem::recNoDepOnVert}
If $\P(a_0)=0$ for some $a_0\in V$, then $\P(a)=0$ for all $a\in V$. 
\end{corollary}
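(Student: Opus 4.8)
The plan is to use the energy characterization of capacity from Theorem~\ref{Lem::inf} together with the comparison estimate in Lemma~\ref{lem:FTC}. Since $\P(a_0)=0$, Theorem~\ref{Lem::inf} gives $\inf\{Q(\phi)\mid \phi\in C_c(V),\ \phi(a_0)=1\}=0$; equivalently, for every $\tau\succ0$ there is a compactly supported $\phi$ with $\phi(a_0)=1$ and $Q(\phi)\preceq\tau$. The goal is to produce, for an arbitrary vertex $a$, a test function normalized at $a$ whose energy is controlled by a constant multiple of $\tau$, which by the same theorem forces $\P(a)=0$ (note that $\P(a)$ exists automatically here because the monotone sequence $\P_n(a)$ is bounded by $0$ from below and we will show it is eventually below any positive element; alternatively the existence of $\inf E_a$ is what we establish directly).

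First I would fix $a\in V$ and apply Lemma~\ref{lem:FTC} to the pair $a_0,a$ to obtain a constant $C:=C_{a_0,a}$ with $|\psi(a_0)-\psi(a)|^2\preceq C\,Q(\psi)$ for all $\psi\in C_c(V)$. Given $\tau\succ0$, pick $\phi\in C_c(V)$ with $\phi(a_0)=1$ and $Q(\phi)\preceq\tau$. The candidate test function is $\psi=1_a-\phi+\phi(a)1_a$, or more transparently: I want a function equal to $1$ at $a$ and with small energy. The cleanest route is to observe that $Q$ is translation-invariant (it only sees differences), so consider $\widetilde\phi:=\phi-\phi(a)\cdot\mathbf{1}$; this is not compactly supported, so instead I work with $\psi:=\phi+(1-\phi(a))1_a$. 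Then $\psi\in C_c(V)$, $\psi(a)=1$, and $Q(\psi)=Q(\phi+(1-\phi(a))1_a)\preceq 2Q(\phi)+2(1-\phi(a))^2 Q(1_a)$ by the elementary inequality $Q(f+g)\preceq 2Q(f)+2Q(g)$ (which holds over any ordered field since $(s+t)^2\preceq 2s^2+2t^2$). Here $Q(1_a)=b(a)$, a fixed element of $\K^+$, and $1-\phi(a)=\phi(a_0)-\phi(a)$, so $(1-\phi(a))^2\preceq C\,Q(\phi)\preceq C\tau$ by Lemma~\ref{lem:FTC}. Hence $Q(\psi)\preceq 2\tau+2C b(a)\tau=(2+2Cb(a))\tau$.

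Putting this together: for every $\tau\succ0$ the set $E_a:=\{Q(\phi)\mid\phi\in C_c(V),\ \phi(a)=1\}$ contains an element $\preceq(2+2Cb(a))\tau$. Since $\tau\succ0$ was arbitrary and $2+2Cb(a)$ is a fixed positive element, $0$ is the infimum of $E_a$ (it is clearly a lower bound as $Q\succeq0$, and no positive element can be a lower bound by the above). Therefore $\inf E_a=0$ exists, and Theorem~\ref{Lem::inf} yields $\P(a)=0$. As $a$ was arbitrary, $\P(a)=0$ for all $a\in V$.

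The main obstacle is really just the bookkeeping around non-compactly-supported shifts: one cannot simply subtract the constant $\phi(a)$ since that destroys compact support, so the correction must be made with the single-point function $1_a$, and one must then absorb its (fixed, finite by local finiteness) energy $b(a)$ into the estimate. The inequality $Q(f+g)\preceq 2Q(f)+2Q(g)$ is valid in any ordered field, so no real-analytic input is needed; the only genuinely non-Archimedean subtlety, namely the possible nonexistence of limits, is sidestepped entirely by working with the infimum characterization rather than with the sequence $(\P_n(a))$.
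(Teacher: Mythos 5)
Your proof is correct, and it rests on the same two ingredients as the paper's own argument: the energy characterization of Theorem~\ref{Lem::inf} and the comparison estimate of Lemma~\ref{lem:FTC}. The one place where you genuinely diverge is the renormalization step. The paper takes near-optimal test functions $\phi_k$ at $a_0$, uses Lemma~\ref{lem:FTC} to conclude $\phi_k(a)\to 1$, and then divides, setting $\psi_k=\phi_k/\phi_k(a)$; this scales the energy exactly by $\phi_k(a)^{-2}$ but implicitly requires $\phi_k(a)\neq 0$, which is only guaranteed for large $k$. You instead correct additively, $\psi=\phi+(1-\phi(a))1_a$, which works for every $\phi$ unconditionally, at the price of invoking $Q(f+g)\preceq 2Q(f)+2Q(g)$ and the fixed constant $Q(1_a)=b(a)$ (finite by local finiteness). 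Both devices are valid over an arbitrary ordered field, and both proofs correctly sidestep the nonexistence-of-limits issue by working with the infimum. Two small points of bookkeeping in your write-up: discard the first, garbled candidate $\psi=1_a-\phi+\phi(a)1_a$ and keep only the second one, which is the one you actually estimate; and when you conclude $\inf E_a=0$, to rule out a lower bound $d\succ 0$ you should feed in $\tau=d/(2(2+2Cb(a)))$ (or similar), since with $\tau=d/(2+2Cb(a))$ you only produce an element of $E_a$ that is $\preceq d$, which is not yet a contradiction. Neither issue affects the substance.
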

\begin{proof}
By Theorem~\ref{Lem::inf} we have $\P(a_0)=0$ if and only if there is a sequence of functions $\phi_k\in C_c(V), k\in \mathbb N$ with $\phi_k(a_0)=1$ and $Q(\phi_k)\to 0$. 
If $\P(a_0)=0$ for some $a_0\in V$, then  $\phi_k(a)\to 1$ for all $a \in V$ by the lemma above. 
In turn, the sequence $\psi_k:=\dfrac{\phi_k}{\phi_k(a)}$ belongs to $C_c(V)$, $\psi_k(a)=1$ and  
$$
0\preceq Q\left(\psi_k\right)=\dfrac{Q( \phi_k)}{\phi_k^2(a)}\to 0,
$$
which finishes the proof.
\end{proof}

\subsection{Convergence of the solution of the Dirichlet problem}
Next, we investigate convergence of solutions of the Dirichlet problems on finite approximations of graph. It will allow us to prove that positive capacity is independent of the vertex.

\begin{definition}[Exhaustion]
	A sequence $K_n$, $n\in\mathbb{N}_0$, of finite subsets $K_n \subset V$ is called an {\em  exhaustion} of the graph, if for all  $n\in \Bbb N_0$,	$$ K_n\subset K_{n+1}\qquad\mbox{and}\qquad V=\bigcup_{n=0}^\infty K_n .$$
\end{definition}

Next, we show that solutions of the Dirichlet problem pointwise increase along an exhaustion. This is also referred to as domain monotonicity.
\begin{lemma}[Domain monotonicity]\label{lem::monofSol}
Let  $a \in V$ be a fixed vertex and $(K_n)$ be an exhaustion with $a\in K_0$.
Let $v_n$, $ \widetilde v_{n} $ be the solution of the Dirichlet problem \eqref{dirpr} and  \eqref{dirprmod} for $ a $ and $ K_{n} $. Then,  for any $x\in V, n\in \Bbb N$,
\begin{align*}
	 0\preceq v_{n}(x)&\preceq v_{n+1}(x)\preceq 1,\\
0\preceq  \widetilde v_{n}(x)&\preceq \widetilde v_{n+1}(x).	 
\end{align*}Moreover, for any $x \in V$ there exists $n_0 \in \mathbb{N}$ such that $v_n(x),\widetilde v_{n}(x)\succ 0$ for all $n\ge n_0$.
\end{lemma}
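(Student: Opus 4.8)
The plan is to first prove the pointwise monotonicity $v_n \preceq v_{n+1}$ and $\widetilde v_n \preceq \widetilde v_{n+1}$ by an application of the maximum principle (Proposition~\ref{pro::maxprinc}) to the difference function, and then deduce the eventual strict positivity from the fact that any fixed vertex eventually lies in the interior of $K_n$ together with the strict positivity statement already contained in Proposition~\ref{lemma::PLap}.

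For the monotonicity of $v_n$, I would consider $w := v_{n+1} - v_n$ on $V$. Since $K_n \subseteq K_{n+1}$, on $K_n \setminus \{a\}$ we have $\Delta v_{n+1} = 0 = \Delta v_n$, hence $\Delta w = 0$ there, and $w(a) = 1 - 1 = 0$, so in fact $\Delta w = 0$ on all of $K_n$. On $V \setminus K_n$ we have $v_n = 0$ and $v_{n+1} \succeq 0$ (by Proposition~\ref{lemma::PLap}, which gives $0 \prec v_{n+1} \preceq 1$ on $K_{n+1}$ and $v_{n+1} = 0$ outside $K_{n+1}$), so $w \succeq 0$ on $V \setminus K_n$. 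Now apply Proposition~\ref{pro::maxprinc} to $-w$ on $K_n$: since $\Delta(-w) \preceq 0$ on $K_n$ we get $\min_{K_n} w = -\max_{K_n}(-w) \succeq -\max_{S_1(K_n)}(-w) = \min_{S_1(K_n)} w \succeq 0$, because $S_1(K_n) \subseteq V \setminus K_n$ where $w \succeq 0$. Hence $w \succeq 0$ on $K_n$, and combined with $w \succeq 0$ outside $K_n$ this gives $v_n \preceq v_{n+1}$ everywhere; the bounds $0 \preceq v_n$ and $v_{n+1} \preceq 1$ are immediate from Proposition~\ref{lemma::PLap}. For $\widetilde v_n$ the argument is essentially the same with $\widetilde w := \widetilde v_{n+1} - \widetilde v_n$: on $K_n \setminus \{a\}$ both Laplacians vanish, at $a$ both equal $1$, so $\Delta \widetilde w = 0$ on $K_n$; outside $K_n$ we have $\widetilde v_n = 0$ and $\widetilde v_{n+1} \succeq 0$ (Lemma~\ref{rem::modDprFin} expresses $\widetilde v_{n+1} = v_{n+1}/\Delta v_{n+1}(a)$ with $\Delta v_{n+1}(a) = Q(v_{n+1})/m(a) \succ 0$, so $\widetilde v_{n+1} \succeq 0$); the same maximum-principle step yields $\widetilde w \succeq 0$ on $K_n$, hence $\widetilde v_n \preceq \widetilde v_{n+1}$ on all of $V$.

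For the eventual strict positivity, fix $x \in V$. Choose $n_0$ so large that $B_2(x) \subseteq K_{n_0}$ (possible since $(K_n)$ is an exhaustion and $B_2(x)$ is finite by local finiteness); in particular $x$ lies in the interior of $K_n$ for all $n \ge n_0$. For such $n$, Proposition~\ref{lemma::PLap} gives $0 \prec v_n$ on $K_n$, so $v_n(x) \succ 0$; and since $\widetilde v_n = v_n / \Delta v_n(a)$ with $\Delta v_n(a) \succ 0$, also $\widetilde v_n(x) \succ 0$. Actually even $n_0$ with $x \in K_{n_0}$ suffices here, since $v_n \succ 0$ on all of $K_n$ by Proposition~\ref{lemma::PLap}, and by the monotonicity just proven, once $v_{n_0}(x) \succ 0$ we get $v_n(x) \succeq v_{n_0}(x) \succ 0$ for all $n \ge n_0$, and likewise for $\widetilde v_n$.

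The only mild subtlety — which I expect to be the main point to get right rather than a genuine obstacle — is the correct bookkeeping of the maximum principle: one must verify that $\Delta w = 0$ (not merely $\preceq 0$ or $\succeq 0$) on the \emph{whole} set $K_n$ including the vertex $a$, which is what makes both inequalities in Proposition~\ref{pro::maxprinc} available and lets us pin down the sign of $w$ via its boundary values on $S_1(K_n) \subseteq V \setminus K_n$. Everything else is a direct consequence of results already established in the excerpt.
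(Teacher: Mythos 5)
Your overall strategy (maximum principle applied to the difference of consecutive solutions, then eventual strict positivity from Proposition~\ref{lemma::PLap}) is the same as the paper's, and your treatment of $\widetilde v_n$ and of the strict positivity is correct. However, there is a genuine error in the step for $v_n$: from $w(a) = v_{n+1}(a) - v_n(a) = 1-1 = 0$ you conclude that $\Delta w = 0$ on all of $K_n$ including at $a$. The value of $w$ at $a$ says nothing about $\Delta w(a)$; in fact
\[
\Delta w(a) \;=\; \Delta v_{n+1}(a) - \Delta v_n(a) \;=\; \frac{\P_{n+1}(a) - \P_n(a)}{m(a)} \;\preceq\; 0
\]
by Proposition~\ref{Thm:monotonicity}, and this is generically strictly negative. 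Consequently the hypothesis $\Delta(-w)\preceq 0$ on $K_n$ (equivalently $\Delta w \succeq 0$ on $K_n$) that you need in order to invoke Proposition~\ref{pro::maxprinc} fails precisely at the vertex $a$, so the inequality $\min_{K_n} w \succeq \min_{S_1(K_n)} w$ is not justified as stated. Ironically, the ``mild subtlety'' you flag at the end --- verifying $\Delta w = 0$ on the whole of $K_n$ including $a$ --- is exactly the point at which the argument breaks.

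The repair is what the paper does: apply the minimum principle only on $K_n\setminus\{a\}$, where $\Delta w = 0$ genuinely holds. The relevant boundary set $S_1(K_n\setminus\{a\})$ is contained in $S_1(K_n)\cup\{a\}$, and there $w\succeq 0$: on $S_1(K_n)$ because $v_n = 0$ while $v_{n+1}\succeq 0$, and at $a$ because $w(a)=0$. This yields $w \succeq 0$ on $K_n\setminus\{a\}$, which together with $w(a)=0$ and $w\succeq 0$ outside $K_n$ gives the monotonicity. Note that this issue does not arise for $\widetilde w$, since there $\Delta\widetilde v_n(a) = \Delta\widetilde v_{n+1}(a) = 1$, so $\Delta\widetilde w$ really does vanish on all of $K_n$ --- which is why your argument for the renormalized problem is fine as written.
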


\begin{proof}
Firstly we note that $1\succeq v_n\succeq 0$ due to Proposition~\ref{lemma::PLap}.
We observe that $u_n:=v_{n+1}- v_n$ satisfies  $\Delta u_{n} =0 $ on $ K_{n}\setminus\{a\} $. Hence, by the maximum principle, Proposition~\ref{pro::maxprinc},
\begin{align*}
	\min_{K_{n}\setminus\{a\}}u_{n} \succeq \min_{S_{1}( K_{n}) \cup\{a\}} u_{n}\succeq 0
\end{align*}
since $ v_{n+1}\succeq 0  $ and $ v_{n}=0 $ on $ S_{1}(K) $. Hence, $ v_{n+1}\succeq v_{n} $ on $ K_{n}\setminus\{a\}  $ and, furthermore, $ v_{n}(a)=1=v_{n+1}(a) $. Finally, the eventual strict positivity in $ x $ follows from Proposition~\ref{lemma::PLap} by choosing $ n_{0} $ such that $ x\in K_{n_{0}} $. 

The argument for $ \widetilde v_{n} $ is analogous by setting $\widetilde u_n:=\widetilde v_{n+1}- \widetilde v_n$ and observing $ \Delta \widetilde u_{n}=0 $ on $ K_{n} $ and $ \widetilde u_{n}\succeq  0$ outside of $ K_{n} $. Application of the maximum principle yields the corresponding result.
\end{proof}

\begin{theorem}[Convergence of solutions to \eqref{dirpr}]\label{Thm::solutions}
Let  $a \in V$ be a fixed vertex and $(K_n)$ be an exhaustion with  $a\in K_0$.
Let $v_n$ be the solution of the Dirichlet problem for $ K_{n} $ and  $ a $. Then,
\begin{align*}
	\P(a)=m(a)\lim_{n\to\infty}\Delta v_{n}(a)=b(a)-\lim_{n\to\infty}\sum_{y\in V}v_n(y)b(a,y)
\end{align*}
in case either limit exists. Moreover,
\begin{enumerate}
\item
$\P(a)=0$ if and only if  ${\lim_{n\to \infty}v_{n}(x)}=1$ for all $x\in V$.
\item
$\P(a)\succ 0$ if and only if ${\lim_{n\to \infty}v_{n}(x)}\prec 1$ for some $x\in V$ and the limit exists for all $x$.
\item
$\P(a)$ does not exist if and only if ${\lim_{n\to \infty}v_{n}(x)}$ does not exist for some $x\in V$.
\end{enumerate}
\end{theorem}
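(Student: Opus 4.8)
The plan is to establish the main identity first and then read off the three equivalences. Recall from Lemma~\ref{rem::modDprFin} that $\P_n(a) = \Delta v_n(a) m(a)$ (with $v_n$ the solution of \eqref{dirpr} for $K_n$), and from Proposition~\ref{lemma::PLap} that $\Delta v_n(a) m(a) = Q(v_n)$. So the first asserted chain of equalities is essentially the definition of $\P_n(a)$ together with the expansion
\begin{align*}
	m(a)\Delta v_n(a) = \sum_{y\in V}(v_n(a)-v_n(y))b(a,y) = b(a) - \sum_{y\in V} v_n(y) b(a,y),
\end{align*}
using $v_n(a)=1$; taking limits as $n\to\infty$ gives the displayed formula whenever either limit exists. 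The slightly more careful point is that the local finiteness of the graph makes the sum $\sum_{y\in V} v_n(y) b(a,y)$ finite for each $n$, so these manipulations are legitimate.

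For the equivalences, the key tool is Lemma~\ref{lem::monofSol}: for each $x\in V$ the sequence $(v_n(x))$ is monotone increasing and bounded by $1$. I would combine this with the student's dream (Proposition~\ref{prop:studentsdream}) to analyse existence of $\lim_n v_n(x)$, but the cleaner route is to relate $\lim_n v_n(x)$ directly to $\P(a)$ via Lemma~\ref{Lem::FiniteSolxy}. For (1): if $\P(a)=0$, then $Q(v_n)\to 0$, and by Lemma~\ref{lem:FTC} applied to $\phi = v_n - 1_a\cdot 0$... more precisely to $v_n - v_n$ we cannot quite do that; instead use that $Q(v_n)\to 0$ forces $|v_n(x) - v_n(a)|^2 = |v_n(x)-1|^2 \preceq C_{a,x} Q(v_n) \to 0$, hence $v_n(x)\to 1$ for all $x$. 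Conversely, if $v_n(x)\to 1$ for all $x$, pick any neighbour $y$ of $a$; then $b(a) - \sum_{y\sim a} v_n(y) b(a,y) \to b(a) - \sum_{y\sim a} b(a,y) = 0$ by local finiteness, so $\P(a)=0$.

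For (2) and (3) the idea is that, because of monotonicity, the only obstruction to convergence of $(\P_n(a))$ is a "gap" that does not close, and this gap is detected by the pointwise limits $v_n(x)$. Concretely: if $v_n(x)$ converges for every $x$ to some limit function $v_\infty$, then by local finiteness $m(a)\Delta v_n(a) = b(a)-\sum_{y\sim a} v_n(y)b(a,y)$ converges, so $\P(a)$ exists; and it is positive iff $v_\infty\not\equiv 1$, i.e. iff $v_\infty(x)\prec 1$ for some $x$ — here one uses part (1) to rule out the degenerate case $v_\infty\equiv 1$ giving $\P(a)=0$. This proves the "if" halves of (2) and (3) and, by contraposition together with (1), the "only if" halves as well: if $v_n(x)$ fails to converge for some $x$, then $\P(a)$ cannot exist (else the above would force convergence of $v_n(y)$ for $y\sim a$, and an induction along paths using $\Delta v_n = 0$ off $a$ would propagate convergence everywhere — this is the step requiring a little care). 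The main obstacle I anticipate is exactly this last implication: showing that convergence of $\P_n(a)$, i.e. of the boundary charge at $a$, actually forces pointwise convergence of $v_n$ at every vertex. I would handle it by first getting convergence at the neighbours of $a$ from the charge identity (each term $v_n(y)b(a,y)$ is monotone, the finite sum converges, so each term converges since all are monotone — here monotonicity of the individual summands is essential and comes from Lemma~\ref{lem::monofSol}), and then propagating outward: at a vertex $x\in K_n\setminus\{a\}$ we have $m(x)\Delta v_n(x) = 0$ for $n$ large, i.e. $\sum_{y\sim x} v_n(y)b(x,y) = b(x) v_n(x)$, so once all neighbours $y\sim x$ have convergent $(v_n(y))$, so does $(v_n(x))$; connectedness then finishes the induction.
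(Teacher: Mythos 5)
Your proof is correct in substance, but for the one genuinely delicate implication --- that existence of $\P(a)=\lim_n \P_n(a)$ forces pointwise convergence of $v_n$ at \emph{every} vertex --- you take a different route from the paper. The paper's argument is a single global energy computation: setting $u_n=v_{n+1}-v_n$, Green's formula gives $Q(u_n)=\P_n(a)-\P_{n+1}(a)\to 0$ by the student's dream, and then Lemma~\ref{lem:FTC} with $u_n(a)=0$ yields $v_{n+1}(x)-v_n(x)\to 0$ at every $x$ simultaneously, with no appeal to domain monotonicity. Your argument is local and inductive: it uses domain monotonicity (Lemma~\ref{lem::monofSol}) to see that each summand $v_n(y)b(x,y)$ is nondecreasing, so that convergence of a finite sum of such terms forces convergence of each term (via $0\preceq t_{n+1}-t_n\preceq s_{n+1}-s_n\to 0$ and the student's dream), and then propagates outward from $a$ using $\Delta v_n=0$ off $a$. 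This works, and it is arguably more elementary, but note that your induction step as literally written points the wrong way: ``once all neighbours of $x$ converge, so does $v_n(x)$'' cannot be iterated outward, since the neighbours of a distance-$k$ vertex include distance-$(k+1)$ vertices not yet controlled. The correct step is the one you already use at $a$: assuming convergence at all vertices of distance $\preceq k$, the equation $b(x)v_n(x)=\sum_{y\sim x}v_n(y)b(x,y)$ at a distance-$k$ vertex $x$ shows that $\sum_{y\sim x,\,d(a,y)=k+1}v_n(y)b(x,y)$ converges, whence each monotone summand converges; since every distance-$(k+1)$ vertex neighbours some distance-$k$ vertex, connectedness closes the induction. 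With that re-orientation your proof is complete; everything else (the charge identity, part (1) via Lemma~\ref{lem:FTC}, and the deduction of (2) and (3) from (1) and the convergence statement) matches the paper.
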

\begin{proof}
For the formula, we use the definition of effective capacity for finite graphs 
$$
\P_{n}(a) = \Delta v_n(a)m(a)=\sum_{y\in V}(1-v_n(y))b(a,y).
$$

From this immediately follows that convergence of $v_n$ for all vertices implies existence of capacity as $0\preceq v_n(y)\preceq 1$. Moreover, ''$\Leftarrow$'' in  (1) holds.

For ``$\Rightarrow$'' in (1) assume the graph has null capacity, i.e., $Q(v_n)\to 0$. Then the statement follows from Lemma~\ref{lem:FTC}.

For ``$\Leftarrow$'' in (2) assume, that there is some $x\in V$ with $\lim_{n\to\infty}v_n(x)\prec 1$ and the limit exists for all vertices. Then $\P(a)$ exists, and is not zero by ``$\Rightarrow$'' in (1). Therefore, $\P (a)\succ 0$.

To see ``$\Rightarrow$ '' in (2) observe that in a non-Archimedean field, the convergence of the sequence $v_n(x)$, $n \in \mathbb{N}$, is equivalent to the convergence of $u_n(x):=v_{n+1}(x)-v_n(x)$ to $0$ by Proposition~\ref{prop:studentsdream}.
First, by linearity, Green's formula and the fact that $v_{n}, v_{n+1}$ are the solutions of the Dirichlet problems and  Proposition~\ref{lemma::PLap} 
\begin{align*}
	Q(u_n)&=Q(v_{n+1})+Q(v_{n})-2\sum_{x\in V}\Delta v_{n+1}(x)v_n(x)m(x)\\
	&=\P_{n+1}(a)+\P_{n}(a) -2\Delta v_{n+1}(a)m(a)=\P_{n}(a) -\P_{n+1}(a)\to 0. 
\end{align*}
 Hence, we deduce by $ u_{n} (a)=0$ and Lemma~\ref{lem:FTC}
$$|v_{n+1}(x)-v_{n}(x)|=  |u_{n}(x)-u_{n}(a)|\to 0 . $$
Thus, $ (v_{n}(x)) $ converges to a limit and this limit is strictly less than  $ 1 $ for at least one vertex since otherwise
we would be in the null capacity case.

(3) ``$\Leftrightarrow$'': This follows from all the above.
\end{proof}

\begin{remark}
In (2) and (3) of Theorem \ref{Thm::solutions}, {\em some} can not be changed to {\em all}: Consider a vertex, which is trivially connected to $a$ by one edge and is not connected to any other vertex.
\end{remark}

The theorem above gives a characterization of null capacity  via null sequences.

\begin{corollary}[Null sequences]\label{cor::rec-nullseq}
	The graph has null capacity if and only if there is a sequence of compactly supported functions, converging pointwise to $1$, whose energies converge to $0$. 
\end{corollary}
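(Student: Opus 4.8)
The plan is to derive this directly from Theorem~\ref{Thm::solutions}(1) together with the minimization characterization already built into the $ v_{n} $. For the forward direction, suppose the graph has null capacity. Fix a vertex $ a\in V $ and an exhaustion $ (K_{n}) $ with $ a\in K_{0} $, and let $ v_{n} $ be the solution of \eqref{dirpr} for $ K_{n} $ and $ a $. By Theorem~\ref{Thm::solutions}(1), $ v_{n}(x)\to 1 $ for every $ x\in V $, so $ (v_{n}) $ is a sequence of compactly supported functions converging pointwise to the constant function $ 1 $. It remains to check that the energies converge to zero, but this is immediate: $ Q(v_{n})=\P_{n}(a)\to\P(a)=0 $ by Proposition~\ref{lemma::PLap} and the definition of null capacity. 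Thus $ (v_{n}) $ is the desired sequence.

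For the converse, suppose there is a sequence $ \psi_{k}\in C_{c}(V) $ with $ \psi_{k}\to 1 $ pointwise and $ Q(\psi_{k})\to 0 $. Fix $ a\in V $; I want to produce test functions normalized at $ a $ with vanishing energy and invoke Theorem~\ref{Lem::inf}. Since $ \psi_{k}(a)\to 1 $, we have $ \psi_{k}(a)\ne 0 $ for large $ k $, so the rescaled functions $ \phi_{k}:=\psi_{k}/\psi_{k}(a) $ lie in $ C_{c}(V) $ and satisfy $ \phi_{k}(a)=1 $. Then
\begin{align*}
0\preceq Q(\phi_{k})=\frac{Q(\psi_{k})}{\psi_{k}(a)^{2}}\to 0,
\end{align*}
using that $ \psi_{k}(a)^{2}\to 1 $ and $ Q(\psi_{k})\to 0 $ (one should note here that division and multiplication are continuous in the order topology, so the quotient of a null sequence by a sequence converging to $ 1 $ is again null; this is where Cauchy completeness and the elementary arithmetic of limits in $ \K $ enter). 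Hence $ \inf\{Q(\phi)\mid\phi\in C_{c}(V),\ \phi(a)=1\}=0 $, and by Theorem~\ref{Lem::inf} the limit $ \P(a) $ exists and equals $ 0 $, i.e. the graph has null capacity.

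I do not expect a genuine obstacle here: both directions are short bookkeeping on top of Theorem~\ref{Thm::solutions} and Theorem~\ref{Lem::inf}, and in fact the converse argument duplicates the rescaling trick already used in the proof of Corollary~\ref{Lem::recNoDepOnVert}. The only point requiring a little care is the elementary fact that in a non-Archimedean ordered field satisfying Assumption~\ref{a:field}, if $ c_{k}\to 0 $ and $ d_{k}\to 1 $ then $ c_{k}/d_{k}\to 0 $ — but this is standard and can be cited or dispatched in one line. An alternative, even more economical route would be to simply observe that the forward direction is witnessed by the $ v_{n} $ and the converse is witnessed by Theorem~\ref{Lem::inf}, making the corollary essentially a restatement; I would still spell out the rescaling step for the reader's convenience.
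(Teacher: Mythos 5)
Your proof is correct and follows essentially the same route as the paper: the forward direction via the solutions $v_n$ of \eqref{dirpr} together with Theorem~\ref{Thm::solutions}, and the converse via Theorem~\ref{Lem::inf}. The only difference is that you spell out the rescaling $\phi_k=\psi_k/\psi_k(a)$ needed to normalize the given sequence at $a$ before invoking Theorem~\ref{Lem::inf} --- a detail the paper's one-line converse leaves implicit --- which is a welcome clarification rather than a deviation.
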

\begin{proof}
	If the graph has null capacity, then for the sequence of solutions $(v_n)$  of the Dirichlet problem on balls $ B_{n}(a) $ we have $ Q(v_{n})=\Delta v_{n}(a)m(a)\to 0 $ by Proposition~\ref{lemma::PLap}. By Theorem~\ref{Thm::solutions} these solutions converge to $ 1 $ pointwise. On the other hand,  if such a sequence exists, then infimum of the energies in Theorem \ref{Lem::inf} is $0$, therefore, the graph has null capacity.
\end{proof}
\subsection{Independence of positive capacity on the vertex}\label{ss:indep}
To see the independence of positive capacity on the vertex, we also need the following lemma.
\begin{lemma}\label{lem::convergence}
	Let $(a_n)$ and $(b_n)$ be sequences in $\K$ satisfying $a_{n+1}\succeq a_n\succ 0$ and $b_{n+1}\succeq b_n\succ 0$ for all $n \in \mathbb N$. If the limit $\lim_{n\to \infty}a_n b_n$ exists, then the limits 	$\lim_{n\to \infty}a_n$ and $\lim_{n\to \infty}b_n$ exist.
\end{lemma}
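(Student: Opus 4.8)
The plan is to route the argument through ``a student's dream'' (Proposition~\ref{prop:studentsdream}), which characterizes convergence in a Cauchy complete non-Archimedean field by the convergence of consecutive differences to zero. Since $(a_nb_n)$ converges in $\K$ by hypothesis, we first record that
$$
a_{n+1}b_{n+1}-a_nb_n \longrightarrow 0 .
$$
The goal is then to deduce $a_{n+1}-a_n\to 0$ and $b_{n+1}-b_n\to 0$, and to conclude by applying Proposition~\ref{prop:studentsdream} in the converse direction to each of $(a_n)$ and $(b_n)$.

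The key step is the telescoping identity
$$
a_{n+1}b_{n+1}-a_nb_n=(a_{n+1}-a_n)\,b_{n+1}+a_n\,(b_{n+1}-b_n).
$$
Since $a_{n+1}\succeq a_n\succ 0$ and $b_{n+1}\succeq b_n\succ 0$, both summands on the right-hand side are $\succeq 0$, so each of them lies between $0$ and $a_{n+1}b_{n+1}-a_nb_n$. A squeezing argument (valid in any ordered field: if $0\preceq c_n\preceq d_n$ and $d_n\to 0$, then $c_n\to 0$) then yields $(a_{n+1}-a_n)b_{n+1}\to 0$ and $a_n(b_{n+1}-b_n)\to 0$.

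Next I would use monotonicity to strip off the factors. As $(b_n)$ is increasing and positive, $b_{n+1}\succeq b_1\succ 0$, hence $0\preceq (a_{n+1}-a_n)b_1\preceq (a_{n+1}-a_n)b_{n+1}\to 0$, so $(a_{n+1}-a_n)b_1\to 0$; dividing by the fixed positive constant $b_1$ (multiplication by $1/b_1$ preserves convergence to zero in an ordered field) gives $a_{n+1}-a_n\to 0$, and Proposition~\ref{prop:studentsdream} shows that $(a_n)$ converges in $\K$. Interchanging the roles of $(a_n)$ and $(b_n)$ gives convergence of $(b_n)$ in exactly the same way.

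I do not expect a genuine difficulty here; the only point requiring care is that one must resist the temptation to argue ``$(a_n)$ is bounded above by $\bigl(\lim_n a_nb_n\bigr)/b_1$ and increasing, hence convergent'': bounded monotone sequences need not converge over a non-Archimedean field, which is precisely the phenomenon driving the whole paper. For this reason the convergence has to be obtained through the difference sequence and Proposition~\ref{prop:studentsdream} as above. (Throughout, ``the limit exists'' is understood, following the paper's conventions, as existence of a limit in $\K$, and this is all that the argument requires.)
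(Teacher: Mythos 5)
Your proof is correct and follows essentially the same route as the paper's: the same telescoping decomposition of $a_{n+1}b_{n+1}-a_nb_n$ into two nonnegative summands, squeezing each to zero, stripping off the monotone factor using the fixed lower bound $a_1$ (resp.\ $b_1$), and concluding via Proposition~\ref{prop:studentsdream}. Your version is slightly more explicit about the squeezing and the division step, and your cautionary remark about bounded monotone sequences is exactly the right point to flag, but there is no substantive difference.
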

\begin{proof}
We have 
	\begin{align*}
		&a_{n+1}b_{n+1}-a_nb_n = a_{n+1}b_{n+1}-a_{n+1}b_{n}+a_{n+1}b_n-a_n b_n=a_{n+1}(b_{n+1}-b_n)+b_n(a_{n+1}-a_n).
	\end{align*}
By the assumption the left hand side converges to $ 0 $. 
	Since $$ a_{n+1}(b_{n+1}-b_n)\succeq a_1 (b_{n+1}-b_n) \succeq 0$$ we have $b_{n+1}-b_n\to 0$,
	which by ``a student's dream'', Proposition~\ref{prop:studentsdream}, means that $(b_n)$ converges. The same argument applies to $(a_n)$.
\end{proof}

\begin{corollary}[Positive capacity does not depend on the vertex]\label{Lem::trNoDepOnVert}
If $\P(a_0)\succ 0$ for some $a_0\in V$, then $\P(a)$ exists and $\P(a)\succ 0$ for all $a\in V$. 
\end{corollary}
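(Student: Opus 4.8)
The plan is to leverage Lemma~\ref{Lem::FiniteSolxy}, which relates the renormalized Dirichlet solutions normalized at different vertices, together with the domain monotonicity from Lemma~\ref{lem::monofSol} and the convergence criterion for products in Lemma~\ref{lem::convergence}. Fix an exhaustion $(K_n)$ with $a, a_0 \in K_0$, and let $\widetilde v_n^{a_0}$, $\widetilde v_n^{a}$ be the solutions of \eqref{dirprmod} for $K_n$ and the vertices $a_0$, $a$ respectively. By Lemma~\ref{rem::modDprFin} we have $\P_{K_n}(a_0) = m(a_0)/\widetilde v_n^{a_0}(a_0)$ and similarly for $a$, so $\P_n(a_0)\succ 0$ converging means $\widetilde v_n^{a_0}(a_0)$ is a positive increasing sequence (by Lemma~\ref{lem::monofSol}) whose reciprocal converges to a nonzero limit; in particular $\widetilde v_n^{a_0}(a_0)$ itself converges to a finite positive limit.

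Next I would use the symmetry relation from Lemma~\ref{Lem::FiniteSolxy}, namely $\widetilde v_n^{a_0}(a)\, m(a) = \widetilde v_n^{a}(a_0)\, m(a_0)$, to transfer information between the two base points. The key identity to exploit is
$$
\widetilde v_n^{a}(a) = \widetilde v_n^{a_0}(a_0) \cdot \frac{\widetilde v_n^{a}(a)}{\widetilde v_n^{a_0}(a_0)},
$$
and I want to show the product $\widetilde v_n^{a_0}(a_0)\cdot c_n$ converges, where $c_n := \widetilde v_n^{a}(a)/\widetilde v_n^{a_0}(a_0)$, by identifying it with something already known to converge. Concretely, one can write $\widetilde v_n^{a}(a)$ in terms of quantities normalized at $a_0$: since $\widetilde v_n^{a} = \Delta_{K_n}^{-1} 1_a$ and $\widetilde v_n^{a_0} = \Delta_{K_n}^{-1} 1_{a_0}$, Green's formula gives cross-relations of the form $\langle \Delta_{K_n}^{-1} 1_a, 1_a\rangle$, and the goal is to bound this between constant multiples of $\widetilde v_n^{a_0}(a_0)$ using the path connecting $a$ and $a_0$ — much as in Lemma~\ref{lem:FTC} — so that $\widetilde v_n^{a}(a)$ and $\widetilde v_n^{a_0}(a_0)$ are comparable up to fixed positive constants depending only on $a, a_0$ and the graph, not on $n$. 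Both sequences are positive and increasing by Lemma~\ref{lem::monofSol}, so once we know one converges and they differ by bounded-below-and-above ratios, a monotonicity argument (or directly Lemma~\ref{lem::convergence} applied to a suitable product) forces the other to converge as well.

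Having shown $\widetilde v_n^{a}(a)$ converges to a finite positive limit, it follows that $\P_n(a) = m(a)/\widetilde v_n^{a}(a)$ converges, and its limit is $m(a)$ divided by a finite positive quantity, hence strictly positive; thus $\P(a)$ exists and $\P(a)\succ 0$. Since $a_0$ and $a$ were arbitrary vertices with the roles only requiring $\P(a_0)\succ 0$, this proves the claim. The main obstacle I anticipate is establishing the $n$-uniform two-sided comparison between $\widetilde v_n^{a}(a)$ and $\widetilde v_n^{a_0}(a_0)$: one needs a lower bound (to rule out the limit being zero or the ratio degenerating) and an upper bound (to rule out divergence), and both should come from feeding the fixed connecting path into the energy estimate of Lemma~\ref{lem:FTC}-type reasoning applied to the functions $\widetilde v_n^{a_0}/\widetilde v_n^{a_0}(a_0)$ and $\widetilde v_n^{a}/\widetilde v_n^{a}(a)$, which are exactly the solutions $v_n^{a_0}$, $v_n^{a}$ of \eqref{dirpr}. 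Using $v_n^{a_0}(a) \to$ (limit in $(0,1)$ by Theorem~\ref{Thm::solutions}) together with $\P_n(a_0)\to \P(a_0)\succ 0$ and the Lemma~\ref{Lem::FiniteSolxy} identity $v_n^{a_0}(a)/\P_n(a_0) = v_n^{a}(a_0)/\P_n(a)$ should give the cleanest route: it expresses $\P_n(a)$ as $v_n^a(a_0)\,\P_n(a_0)/v_n^{a_0}(a)$, reducing the problem to the convergence of $v_n^{a}(a_0)$, which is handled by Theorem~\ref{Thm::solutions} once we know $\P(a)$ exists — so care must be taken to avoid circularity, and I would instead run the product-convergence Lemma~\ref{lem::convergence} on $a_n = v_n^{a}(a_0)$-type and $b_n = \P_n(a_0)$-type sequences after checking monotonicity.
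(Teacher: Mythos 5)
Your final route --- transferring convergence via the identity $v_n^{a_0}(a)/\P_n(a_0)=v_n^{a}(a_0)/\P_n(a)$ from Lemma~\ref{Lem::FiniteSolxy}, using Theorem~\ref{Thm::solutions} to obtain convergence of $v_n^{a_0}(a)$, and then applying Lemma~\ref{lem::convergence} to the monotone positive sequences $v_n^{a}(a_0)$ and $1/\P_n(a)=1/Q(v_n^{a})$ --- is exactly the paper's proof, and it correctly resolves the circularity you flag. The earlier detour through an $n$-uniform two-sided comparison of $\widetilde v_n^{a}(a)$ with $\widetilde v_n^{a_0}(a_0)$ is unnecessary; the cleaner argument you settle on at the end suffices.
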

\begin{proof}
Assume $\P(a_0)\succ 0$.  Let $v^0_{n}$ and $v_{n}$ be the solutions of the Dirichlet problem~\eqref{dirpr} on $ K_{n} $ with respect to $ a_{0} $ and $ a $. Then $Q( v^0_n)\to \P(a_{0})$ by Proposition~\ref{lemma::PLap} and the definition of effective capacity. By Theorem~\ref{Thm::solutions},  $v^0_n(x)$ converges for all $x\in V$, in particular, $v^0_n(a)$ converges. Therefore, by Lemma~\ref{Lem::FiniteSolxy}  we have
$$
\dfrac {v^0_n(a)}{Q( v^0_n)}=\dfrac {v_n(a_0)}{Q( v_n)}
$$
and, we infer that the right hand side converges as well.
Moreover, $v_{n+1}(a_0)\succeq v_{n}(a_0)\succ 0$ for $n$ large enough by Lemma~\ref{lem::monofSol} and $Q(v_n)\succeq Q(v_{n+1})$ by Proposition~\ref{Thm:monotonicity} and Proposition~\ref{lemma::PLap}. Therefore, by Lemma~\ref{lem::convergence} we get that $1/Q(v_n)$ converges, i.e. $\P(a)$ exists. Finally, $\P(a)\ne 0$ due to Lemma \ref{Lem::recNoDepOnVert}.
\end{proof}

Putting these results together we conclude Theorem~\ref{Thm::ind}, which states that null, positive and divergent capacity is independent of the vertex.
\begin{proof}[Proof of Theorem~\ref{Thm::ind}]
The statements follow from Corollary~\ref{Lem::recNoDepOnVert} and Corollary~\ref{Lem::trNoDepOnVert}.
\end{proof}

\subsection{Capacity in terms of a Green function}

In the next theorem we compute the capacity of a graph in terms of a Green function.

\begin{theorem}[Green function]\label{Thm::typeDelta}
Let $(V,b)$ be an infinite graph,  $(K_n)$ an exhaustion and $a \in V$ a fixed vertex. Then, for $ G_{n}(x,y)=\Delta_{K_n}^{-1} 1_y(x) $, $ x,y\in V $, we have
$$
\P(a)=\lim_{n\to \infty}\dfrac{m(a)}{ G_{n}(a,a)},
$$
when either side exists. Furthermore,
 \begin{enumerate}
\item
 the graph has null capacity if and only if  $\lim_{n\to \infty} G_{n}(x,y) =\infty$ for some (all) $x,y\in V$,
\item
the graph has positive capacity if and only if $\lim_{n\to \infty} G_{n}(x,y)\in \K$  for some (all) $x,y\in V$,
\item
the graph has divergent capacity  if and only if  $\lim_{n\to \infty}  G_{n}(x,y)$ does not exist for some (all) $x,y\in V$.
\end{enumerate}
Moreover, in the positive capacity case $ G(\cdot,a) $ is the smallest positive solution $ u $ of \begin{align*}
	\Delta u=1_{a}.
\end{align*}
\end{theorem}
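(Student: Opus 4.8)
The plan is to reduce everything to facts already established about $\widetilde v_n$ and $v_n$. First I would observe that by Lemma~\ref{rem::modDprFin} applied to $K=K_n$ we have $G_n(a,a) = \Delta_{K_n}^{-1}1_a(a) = \widetilde v_n(a)$ and $\P_n(a) = m(a)/\widetilde v_n(a)$, where $\widetilde v_n$ solves $(\widetilde{\mathrm{DP}})$ for $K_n$ and $a$. Hence $\P_n(a) = m(a)/G_n(a,a)$ for every $n$, and the limit formula $\P(a) = \lim_{n\to\infty} m(a)/G_n(a,a)$ is immediate once one knows that convergence of one side is equivalent to convergence of the other. Since by Lemma~\ref{lem::monofSol} the sequence $G_n(a,a) = \widetilde v_n(a)$ is positive and monotonically increasing (eventually strictly positive), the map $t\mapsto m(a)/t$ is an order-reversing bijection on $\K^+$, and I would argue directly: if $G_n(a,a)\to G\in\K^+$ then $\P_n(a)\to m(a)/G$; conversely if $\P_n(a)\to c$, then $c\succ 0$ forces $G_n(a,a)\to m(a)/c$ via the student's dream applied to the increasing sequence $G_n(a,a)$ together with the boundedness coming from $\P_n(a)\succeq c - \tau$ eventually, while $c = 0$ corresponds exactly to $G_n(a,a)\to\infty$ (here I would invoke that a monotone increasing positive sequence whose reciprocals tend to $0$ must, by countable cofinality — cf. Lemma~\ref{lem:convtoinfty} applied to the increments — tend to $\infty$).

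For the trichotomy (1)--(3), I would combine the displayed limit formula with Theorem~\ref{Thm::solutions} and Theorem~\ref{Thm::ind}. Null capacity $\P(a)=0$ means $\P_n(a)\to 0$, equivalently $G_n(a,a)\to\infty$; positive capacity means $\P(a)\in\K^+$, equivalently $\lim_n G_n(a,a)\in\K$ and is nonzero; divergent capacity means $\P_n(a)$ does not converge, equivalently $G_n(a,a)$ does not converge (the increasing sequence $G_n(a,a)$ fails to converge while staying bounded away from a limit). To pass from the diagonal statement about $G_n(a,a)$ to the statement ``for some (all) $x,y$'', I would use Lemma~\ref{Lem::FiniteSolxy}, which gives $\widetilde v_n^x(y)m(y) = \widetilde v_n^y(x)m(x)$, i.e.\ $G_n(x,y)m(y) = G_n(y,x)m(x)$ (symmetry of the Green kernel), combined with a Harnack-type comparison: for fixed $x,y$ there are constants $0\prec c_{x,y}\preceq C_{x,y}$ in $\K$ with $c_{x,y}G_n(a,a)\preceq G_n(x,y)\preceq C_{x,y}G_n(a,a)$ for $n$ large. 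Such a comparison follows from the maximum principle (Proposition~\ref{pro::maxprinc}) exactly as domain monotonicity was proved, or more cheaply from Lemma~\ref{lem:FTC} applied to $\widetilde v_n^x - \widetilde v_n^x(\cdot)$ together with $Q(\widetilde v_n^x) = \widetilde v_n^x(x) = G_n(x,x)$; this sandwiching transfers convergence, divergence to $\infty$, and non-convergence simultaneously across all pairs of vertices, which together with Theorem~\ref{Thm::ind} gives the ``some $\Leftrightarrow$ all''.

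For the final assertion, assume positive capacity, so $G_n(\cdot,a)\to G(\cdot,a)$ pointwise (using (2) and the comparison above to get convergence at every vertex, not just the diagonal). Each $G_n(\cdot,a) = \widetilde v_n$ is nonnegative, increasing in $n$ by Lemma~\ref{lem::monofSol}, and satisfies $\Delta \widetilde v_n = 1_a$ on $K_n$; since $K_n$ exhausts $V$ and the graph is locally finite, passing to the limit in the finite sum defining $\Delta$ at each fixed vertex yields $\Delta G(\cdot,a) = 1_a$ on all of $V$, and $G(\cdot,a)\succeq 0$; strict positivity follows from the same iteration-along-paths argument as in the proof of Proposition~\ref{lemma::PLap}. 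For minimality, let $u\succ 0$ satisfy $\Delta u = 1_a$. Then $w_n := u - \widetilde v_n$ satisfies $\Delta w_n = 0$ on $K_n\setminus\{a\}$, $\Delta w_n = 0$ on $\{a\}$ too (both give $1$), so $\Delta w_n = 0$ on all of $K_n$, while $w_n = u \succeq 0$ on $V\setminus K_n$; the minimum principle, Proposition~\ref{pro::maxprinc}, gives $\min_{K_n} w_n \succeq \min_{S_1(K_n)} w_n \succeq 0$, hence $u \succeq \widetilde v_n$ on $V$ for every $n$, and letting $n\to\infty$ gives $u \succeq G(\cdot,a)$. The main obstacle I anticipate is the Harnack-type comparison $c_{x,y}G_n(a,a)\preceq G_n(x,y)\preceq C_{x,y}G_n(a,a)$ with constants independent of $n$: one must make sure the constants produced by the path estimate in Lemma~\ref{lem:FTC} (which involve $Q(\widetilde v_n^x)=G_n(x,x)$) are controlled uniformly, which is why I would instead route the comparison through the maximum principle, comparing $\widetilde v_n^x$ with suitable multiples of $\widetilde v_n^a$ on the finite set $K_n$ — there the constants are genuinely $n$-independent because the maximum principle is applied on each $K_n$ separately but the resulting inequalities propagate.
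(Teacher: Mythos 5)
Your diagonal identity $\P_n(a)=m(a)/G_n(a,a)$, the resulting limit formula, and your treatment of the final assertion (harmonicity of $u-G_n(\cdot,a)$ on $K_n$, the minimum principle, then $n\to\infty$) coincide with the paper's proof. The genuine gap is the mechanism you propose for passing from the diagonal to arbitrary pairs $(x,y)$: a two-sided comparison $c_{x,y}G_n(a,a)\preceq G_n(x,y)\preceq C_{x,y}G_n(a,a)$ does \emph{not} transfer convergence in a non-Archimedean field. A sequence squeezed between two fixed positive multiples of a convergent (even constant) sequence need not converge --- for instance $1-1/n$ over the Levi--Civita field is trapped between $1/2$ and $1$ but diverges. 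So the sandwich only transfers divergence to $\infty$ and boundedness; it does not yield ``positive capacity $\Rightarrow$ $G_n(x,y)$ converges for all $x,y$'' (which you also need for the pointwise convergence $G_n(\cdot,a)\to G(\cdot,a)$ invoked in the last part), nor the converse ``$G_n(x,y)$ converges for some $x,y$ $\Rightarrow$ positive capacity'', nor statement (3). The same issue, in milder form, appears in your first paragraph, where ``student's dream plus boundedness'' is offered in place of the direct estimate $|m(a)/\P_n(a)-m(a)/c|\preceq 2m(a)|\P_n(a)-c|/c^2$.

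The paper closes exactly this hole by writing out the kernel explicitly: by Lemma~\ref{rem::modDprFin}, $G_n(x,y)=\widetilde v_n^{\,y}(x)=v_n^{\,y}(x)\,m(y)/Q(v_n^{\,y})$ with $Q(v_n^{\,y})=\P_n(y)$. Theorem~\ref{Thm::solutions} controls $v_n^{\,y}(x)$ at \emph{every} vertex $x$ simultaneously (it converges, with limit $1$ everywhere precisely in the null case, and fails to converge somewhere precisely in the divergent case), so the ``for all'' directions follow from continuity of products, using $0\prec v_n^{\,y}(x)\preceq 1$, rather than from a squeeze. For the ``for some'' direction of (2) one needs Lemma~\ref{lem::convergence}: both factors $v_n^{\,y}(x)$ and $m(y)/\P_n(y)$ are eventually increasing and positive (Lemma~\ref{lem::monofSol}, Proposition~\ref{Thm:monotonicity}), so convergence of their product forces convergence of each factor, whence $\P_n(y)$ converges to a nonzero element and the graph has positive capacity. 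Your plan becomes correct once the Harnack sandwich is replaced by this product decomposition.
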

\begin{proof}
Let $v_n$, $n \in \mathbb N$, be the solutions of the \eqref{dirpr} for $ K_{n} $ and $ y $.
We recall that by Lemma~\ref{rem::modDprFin}
\begin{equation*}
G_{n}(x,y)=\Delta_{K_n}^{-1} 1_y(x)=\dfrac{v_n(y)m(x)}{Q(v_n)}.
\end{equation*}
This formula yields the results (1)-(3) via Theorem~\ref{Thm::solutions}. In the positive capacity case it is immediate to  see by local finiteness $$  \Delta G(x,a)=\lim_{n\to \infty}\Delta G_{n}(x,a)=\lim_{n\to \infty}\Delta\Delta_{K_n}^{-1} 1_a(x)=1_{a}(x) . $$
Furthermore, $ G $ is positive by Proposition~\ref{lemma::PLap}.
For any other positive solution $ u$ we have that $ w= u-G_{n}(\cdot, a) $ is harmonic on $ K_{n} $ and $ w=u\succeq 0$ outside of $ K_{n} $. Hence, $ w\succeq 0 $ by the maximum principle, Proposition~\ref{pro::maxprinc} and, therefore $ u\succeq G(\cdot ,a) $. 
\end{proof}

\begin{definition}[Green function]
For a positive capacity graph $(V,b,m)$, we call the function $ G : V \times V \to \K$ from the theorem above the \emph{Green function}.
\end{definition}

\section{Monotonicity law and Nash-Williams test}\label{sec:suff}

In this section we explore sufficient conditions on edge weights to determine the type.  We prove a Nash-Williams test which may come in a surprising form to the reader who is not well acquainted with non-Archimedean fields.

\begin{theorem}[Monotonicity law]
Let $b$ and $ b'$ be two graphs on $ V $ with $b\preceq b'$. Then
\begin{align*}
	\P_{n}\preceq \P_{n}'
\end{align*}
and, in particular,
$$
\P\preceq \P'
$$
whenever both effective capacities exist.
\end{theorem}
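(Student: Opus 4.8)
The plan is to reduce the statement to the variational characterization of the finite effective capacity provided by Proposition~\ref{lemma::PLap}, namely that $\P_{n}(a)$ equals the minimum of $Q(\phi)$ over $\phi \in C_c(B_n(a))$ with $\phi(a)=1$. The key observation is that the energy form depends monotonically on the edge weights: if $b \preceq b'$ (meaning $b(x,y) \preceq b'(x,y)$ for all $x,y \in V$), then for every $\phi \in C_c(V)$ we have
\begin{align*}
	Q_b(\phi) = \frac{1}{2}\sum_{x,y\in V}(\nabla_{xy}\phi)^2 b(x,y) \preceq \frac{1}{2}\sum_{x,y\in V}(\nabla_{xy}\phi)^2 b'(x,y) = Q_{b'}(\phi),
\end{align*}
since each summand $(\nabla_{xy}\phi)^2 b(x,y) \preceq (\nabla_{xy}\phi)^2 b'(x,y)$ (the square is in $\K^+\cup\{0\}$, so multiplication preserves the order, and finite sums of such inequalities are again inequalities in the ordered field). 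Here the subscript indicates which weight is used to form the energy.

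The next step is to fix a vertex $a \in V$ and a radius $n \in \mathbb{N}$. Note that the distance balls $B_n(a)$ are taken with respect to the combinatorial graph distance, which depends only on which weights are nonzero; since $b \preceq b'$ does not force the nonzero patterns to coincide, I should be slightly careful. However, $b \preceq b'$ implies $x \sim_b y \Rightarrow b(x,y) \succ 0 \Rightarrow b'(x,y) \succ 0 \Rightarrow x \sim_{b'} y$, so every $b$-edge is a $b'$-edge, hence $d_{b'} \leq d_b$ and $B_n^{b}(a) \subseteq B_n^{b'}(a)$. In particular $C_c(B_n^b(a)) \subseteq C_c(B_n^{b'}(a))$. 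Now let $v$ be the solution of the Dirichlet problem \eqref{dirpr} for the graph $b$ on $K = B_n^b(a)$; then $v \in C_c(B_n^b(a)) \subseteq C_c(B_n^{b'}(a))$ is an admissible competitor for the $b'$-minimization on $B_n^{b'}(a)$, so by Proposition~\ref{lemma::PLap} applied to $b'$,
\begin{align*}
	\P_n'(a) = \min\{Q_{b'}(\phi) : \phi \in C_c(B_n^{b'}(a)),\ \phi(a)=1\} \preceq Q_{b'}(v) \preceq Q_b(v) = \P_n(a),
\end{align*}
where the middle inequality is the energy monotonicity above and the final equality is again Proposition~\ref{lemma::PLap} for $b$. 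This proves $\P_n(a) \preceq \P_n'(a)$ for all $a$ and $n$.

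Finally, assuming both $\P(a) = \lim_n \P_n(a)$ and $\P'(a) = \lim_n \P_n'(a)$ exist in the order topology of $\K$, the inequality passes to the limit: if $\P(a) \succ \P'(a)$, set $c = \P(a) - \P'(a) \succ 0$; choosing $N$ so that $|\P_n(a) - \P(a)| \preceq c/3$ and $|\P_n'(a) - \P'(a)| \preceq c/3$ for $n \geq N$ would force $\P_n(a) \succeq \P(a) - c/3 \succ \P'(a) + c/3 \succeq \P_n'(a)$, contradicting $\P_n(a) \preceq \P_n'(a)$. Hence $\P(a) \preceq \P'(a)$. I expect no serious obstacle here; the only subtlety worth flagging is the one addressed above, namely that the ball over which one minimizes can grow when passing from $b$ to $b'$, but since enlarging the ball only enlarges the admissible class (and hence can only decrease the minimum), this works in our favor and the competitor argument goes through cleanly.
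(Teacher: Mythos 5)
Your energy monotonicity $Q_b(\phi)\preceq Q_{b'}(\phi)$ is correct, but the chain you build from it runs in the wrong direction, and its middle link contradicts that very monotonicity. You write $\P_n'(a)\preceq Q_{b'}(v)\preceq Q_b(v)=\P_n(a)$; since $b\preceq b'$ one has $Q_{b'}(v)\succeq Q_b(v)$, not $\preceq$, and even setting that aside your chain concludes $\P_n'(a)\preceq \P_n(a)$, which is the \emph{reverse} of the claimed $\P_n\preceq\P_n'$ (larger conductances give larger capacity). Using the $b$-minimizer $v$ as a competitor for the $b'$-problem only yields $\P_n'(a)\preceq Q_{b'}(v)$, and since $Q_{b'}(v)\succeq Q_b(v)=\P_n(a)$ this gives no comparison at all. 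The competitor must be inserted the other way around: if $v'$ denotes the $b'$-minimizer, then $\P_n(a)\preceq Q_b(v')\preceq Q_{b'}(v')=\P_n'(a)$, provided $v'$ is admissible for the $b$-problem. This is exactly the paper's one-line argument: $Q\preceq Q'$ on $C_c(V)$ combined with the variational characterization of capacity (Proposition~\ref{lemma::PLap} at the finite level, Theorem~\ref{Lem::inf} for the limit).

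The subtlety you flag about the balls also cuts against you rather than ``in our favor.'' If $b'$ has edges that $b$ lacks, then $B_n^{b'}(a)\supseteq B_n^{b}(a)$, so the admissible class for the $b'$-minimization is \emph{larger}, which pushes $\P_n'(a)$ \emph{down} and thus makes the claimed inequality $\P_n(a)\preceq\P_n'(a)$ harder, not easier; moreover the $b'$-minimizer $v'$ need then not be supported in $B_n^{b}(a)$, so the corrected competitor argument at the finite level really needs the two balls to coincide (as they do when $b$ and $b'$ have the same edge set). For the limiting statement $\P\preceq\P'$ none of this matters: by Theorem~\ref{Lem::inf} both capacities are infima of energies over all of $C_c(V)$ normalized at $a$, and $Q_b\preceq Q_{b'}$ pointwise immediately gives $\inf Q_b\preceq \inf Q_{b'}$, since the left-hand infimum is a lower bound for the right-hand set. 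Your limit-passage argument at the end is fine as a piece of reasoning, but it propagates an inequality you have not actually established.
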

\begin{proof}
For any $ \phi\in C_c(V) $,
we have by assumption
\begin{align*}
	Q(\phi)\preceq Q'(\phi).
\end{align*}	
Thus,  Theorem~\ref{Lem::inf} gives the statement.
\end{proof}

Next, we give a criterion for a graph to not have null capacity.

\begin{corollary}\label{lem:BoundedWeights} If the weights on the edges are bounded from below, i.e., there exists $\tau\in \K^{+}$ such that $ 		b(x,y)\succeq \tau 1_{b(x,y)\neq 0} $, $ x,y\in V $, then 
	the graph does not have null capacity. 
\end{corollary}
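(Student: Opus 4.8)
The plan is to derive this as a direct consequence of the Monotonicity law together with the explicit computation for a path graph carried out in Example~\ref{ex3}. The key observation is that bounded-below weights cannot produce null capacity because, after restricting attention to a single path, the effective capacities stay bounded away from zero only if the series of resistances stays finite — but with resistances $\preceq 1/\tau$ the partial sums grow like $n/\tau$, which is bounded in a non-Archimedean field and therefore cannot tend to $\infty$, so the reciprocal cannot tend to $0$.

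More precisely, first I would fix a vertex $a\in V$ and a neighbour $o\sim a$, and build a subgraph $b'$ of the original graph $b$ in the following way: keep only the edges along some fixed ray (a path) emanating from $a$ — or, since the graph need not contain an infinite simple path, it is cleaner to invoke the Monotonicity law in the opposite direction. Namely, I would \emph{decrease} the weights: set $b'(x,y)=\tau\,1_{b(x,y)\neq 0}$, so that $b'\preceq b$ and, by the Monotonicity law, $\P_{n}'\preceq \P_{n}$ for all $n$, hence the original graph has null capacity only if the constant-weight graph $b'$ does. But $b'$ has all edge weights equal to the single constant $\tau$, and then $\P_n(a)$ for $b'$ can be estimated from below: on a ball $B_n(a)$, any admissible test function $\phi$ with $\phi(a)=1$ has, along any path of length at most $n$ from $a$ to the boundary, energy at least $\tau/n$ by the Cauchy–Schwarz step already used in Lemma~\ref{lem:FTC}. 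Concretely, taking $x\in V\setminus B_n(a)$ with $d(a,x)=n$, Lemma~\ref{lem:FTC} applied to $b'$ gives $1=|\phi(a)-\phi(x)|^2\preceq (n/\tau)\,Q'(\phi)$, so $Q'(\phi)\succeq \tau/n$; by Theorem~\ref{Lem::inf}, $\P_n'(a)\succeq \tau/n$.

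Now I would invoke the non-Archimedean feature: for any infinitely large element $\mathcal{N}\in\K^+$ one has $1/n\succ 1/\mathcal N$ is false in general, but what is true is that $n\preceq\mathcal N$ for every $n\in\mathbb N$, so $\tau/n\succeq \tau/\mathcal N$ for all $n$; hence $\P_n'(a)\succeq \tau/\mathcal N\succ 0$ uniformly in $n$, where $\tau/\mathcal N$ is a fixed positive (infinitesimal) element of $\K$. Therefore $\P_n(a)\succeq \P_n'(a)\succeq \tau/\mathcal N\succ 0$, and if $\P(a)=\lim_n\P_n(a)$ existed it would satisfy $\P(a)\succeq\tau/\mathcal N\succ 0$; in any case the limit cannot be $0$, so the graph does not have null capacity.

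The main obstacle I anticipate is not conceptual but a matter of being careful about \emph{which} direction of the monotonicity law to use and about the fact that the graph may fail to contain an infinite path, so one really wants the lower bound $\P_n'(a)\succeq\tau/n$ via the test-function characterization (Theorem~\ref{Lem::inf}) rather than via an explicit series-law computation that would require a ray. A secondary subtlety is making sure the bound $n\preceq\mathcal N$ is used correctly: the point is precisely that in a non-Archimedean field $\sup_n 1/n$ does not vanish — there is a positive element (e.g. $\tau$ divided by any fixed infinitely large $\mathcal N$, or simply noting $\{1/n\}$ is bounded below by $0$ and decreasing but never reaches arbitrarily small positive non-infinitesimals) below which $\tau/n$ never drops, so the decreasing sequence $\P_n$ is bounded below by a fixed positive element and its limit, if it exists, is positive; and if it does not exist we are in the divergent-capacity case. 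Either way, null capacity is excluded.
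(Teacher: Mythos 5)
Your proof is correct, and its first step coincides with the paper's: both compare $b$ from below with the constant-weight graph $b'=\tau\, 1_{b\neq 0}$ via the Monotonicity law, reducing the claim to showing that the capacities $\P'_n(a)$ of the comparison graph are uniformly bounded below by a fixed positive element of $\K$. You diverge in how that lower bound is obtained. The paper observes that $\P'_n(a)$ is ($\tau$ times) a positive \emph{rational} number for every $n$ --- the Dirichlet problem for $0/1$-weights is a linear system over $\mathbb{Q}$ --- and a sequence of positive rationals cannot converge to $0$ in a non-Archimedean field, since no positive rational is infinitesimal. You instead derive the quantitative estimate $\P'_n(a)\succeq \tau/(2n)$ by applying Lemma~\ref{lem:FTC} along a geodesic from $a$ to a vertex at distance $n$ (such a vertex exists because the graph is infinite, connected and locally finite) together with the energy-minimizing characterization of $\P'_n(a)$ from Proposition~\ref{lemma::PLap}, and then use $n\preceq\mathcal N$ to obtain the uniform lower bound $\tau/(2\mathcal N)\succ 0$. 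Your route is slightly longer but more self-contained: it avoids the (easy but unproven) rationality claim and gives an explicit rate. Two small remarks: your aside that ``$1/n\succ 1/\mathcal N$ is false in general'' is itself incorrect --- $\mathcal N\succ n$ does give $1/n\succ 1/\mathcal N$ --- but harmless, since you only use the weak inequality $\tau/n\succeq\tau/\mathcal N\succ 0$; and your opening idea of restricting to a ray is rightly discarded, as the final argument never needs one.
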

\begin{proof}
Let $ b'= 1_{b\neq 0} $. Then, by assumption there exists $ \tau\in \mathbb{K}_{+} $ such that  $ b \succeq \tau b' $. By the theorem above we infer
$$
\P_n(a)\succeq \tau \P'_n(a).
$$ 
Then  $\P'_{n}(a)$ is a rational number for any $n$ and can not converge to zero in $\K$ as $n\to\infty$. 
\end{proof}

Next, we turn to the non-Archimedean Nash-Williams test. 

\begin{theorem}[Nash-Williams test]\label{NWtest}
Let $ a\in V  $ be a vertex. Then, the following statements are equivalent:	
\begin{itemize}
	\item [(i)] One has	\begin{equation*}\label{NWcond}
		\sum_{r=0}^\infty \dfrac{1}{b(\partial B_r)}=\infty.
	\end{equation*}
\item [(ii)] There is a subsequence of balls $(B_{n_k}(a))$ such that  
	\begin{equation*} \label{eq:NWNonArch}
		\lim_{{k}\to \infty} b(\partial B_{n_k}(a))=0.
	\end{equation*}
\item [(iii)] There is a subsequence of balls $(B_{n_k}(a))$ such that
\begin{equation*} \label{eq:NWNonArch}
	\lim_{{k}\to \infty} \max_{(x,y)\in\partial B_{n_{k}(a)}} b(x,y) =0.
\end{equation*}	
\end{itemize}
If one of the above conditions is satisfied, then  the graph has null capacity.
\end{theorem}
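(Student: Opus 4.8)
The plan is to settle the three equivalences by proving (i)$\Leftrightarrow$(ii), (ii)$\Rightarrow$(iii) and (iii)$\Rightarrow$(ii), and then to deduce null capacity directly from (i) by testing the energy against explicit radial functions on the balls. Preliminarily I would record that for each $r\ge 1$ the ball $B_{r}(a)$ is a finite, nonempty, proper subset of $V$ (by Assumption~\ref{a:graph} and infiniteness of the graph), so $\partial B_{r}(a)$ contains at least one edge and, by local finiteness, only finitely many; in particular $b(\partial B_{r})\succ 0$. Since $B_{0}(a)=\emptyset$, the summand $r=0$ in~(i) is not meaningful and the series there is read as $\sum_{r\ge 1}$.

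For (i)$\Leftrightarrow$(ii) I would apply Lemma~\ref{lem:convtoinfty} to the sequence $a_{r}:=1/b(\partial B_{r})\succ 0$, together with the elementary fact that for positive elements $x_{k}\to 0$ if and only if $1/x_{k}\to\infty$: the series $\sum_{r}1/b(\partial B_{r})$ equals $\infty$ precisely when some subsequence $1/b(\partial B_{n_{k}})\to\infty$, that is, when $b(\partial B_{n_{k}})\to 0$. The implication (ii)$\Rightarrow$(iii) is immediate from $0\preceq\max_{(x,y)\in\partial B_{n_{k}}}b(x,y)\preceq b(\partial B_{n_{k}})$.

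The step (iii)$\Rightarrow$(ii) is the one I expect to be the main obstacle, being the only genuinely non-Archimedean point. Set $\varepsilon_{k}:=\max_{(x,y)\in\partial B_{n_{k}}}b(x,y)\to 0$ and let $M_{k}\in\mathbb{N}$ be the number of edges in $\partial B_{n_{k}}(a)$, so that $0\preceq b(\partial B_{n_{k}})\preceq M_{k}\varepsilon_{k}$. I would then use that $M_{k}\varepsilon_{k}\to 0$ for any sequence $(M_{k})\subset\mathbb{N}$: if not, there are $\delta\succ 0$ and a subsequence with $M_{k_{j}}\varepsilon_{k_{j}}\succeq\delta$, hence $\varepsilon_{k_{j}}\succeq\delta/M_{k_{j}}$; choosing any infinitesimal $\tau\succ 0$, the convergence $\varepsilon_{k_{j}}\to 0$ forces $\varepsilon_{k_{j}}\prec\delta\tau$ for large $j$, so $M_{k_{j}}\succ 1/\tau$, which is impossible since $1/\tau$ is infinitely large while $M_{k_{j}}\in\mathbb{N}$. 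Hence $b(\partial B_{n_{k}})\to 0$, closing the cycle. Conceptually this is the surprising content of the theorem: the total boundary weight tends to zero merely because the largest boundary weight does, regardless of how many boundary edges there are, and this relies on $\mathbb{N}$ being dominated by the reciprocal of every infinitesimal.

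Finally, to obtain null capacity I use (i). For $n\in\mathbb{N}$ put $S_{0}:=0$ and $S_{r}:=\sum_{s=1}^{r}1/b(\partial B_{s})$, and define $\phi_{n}$ by $\phi_{n}(x)=1-S_{d(a,x)}/S_{n}$ if $d(a,x)\le n$ and $\phi_{n}(x)=0$ if $d(a,x)\ge n$; then $\phi_{n}\in C_{c}(B_{n}(a))$ with $\phi_{n}(a)=1$. Since an edge joins vertices whose distances to $a$ differ by at most one, the only edges contributing to $Q(\phi_{n})$ run from distance $r-1$ to distance $r$ with $1\le r\le n$, across which $\phi_{n}$ jumps by $h_{r}:=(1/b(\partial B_{r}))/S_{n}$; hence $Q(\phi_{n})=\sum_{r=1}^{n}h_{r}^{2}\,b(\partial B_{r})=S_{n}^{-2}\sum_{r=1}^{n}1/b(\partial B_{r})=1/S_{n}$. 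By the energy-minimising property of the Dirichlet solution (Proposition~\ref{lemma::PLap}) this gives $\P_{n}(a)\preceq Q(\phi_{n})=1/S_{n}$, and since (i) forces $S_{n}\to\infty$ and hence $1/S_{n}\to 0$, the bounds $0\preceq\P_{n}(a)\preceq 1/S_{n}$ yield $\P_{n}(a)\to 0$, i.e.\ $\P(a)=0$. This last computation is the one already carried out for weakly spherically symmetric graphs in Theorem~\ref{thm::weaksphersym}; the only thing to check is that it applies verbatim to an arbitrary locally finite connected graph, which it does because edges never skip a sphere.
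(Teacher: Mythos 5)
Your proof is correct, and the equivalences (i)$\Leftrightarrow$(ii)$\Leftrightarrow$(iii) are handled essentially as in the paper: (i)$\Leftrightarrow$(ii) via Lemma~\ref{lem:convtoinfty}, and (iii)$\Rightarrow$(ii) by the observation that the number of boundary edges, being a natural number, is dominated by the reciprocal of every infinitesimal --- the paper compresses your contradiction argument into the single inequality $b(\partial B_{n}(a))\preceq \mathcal{N}\max_{(x,y)\in\partial B_{n}(a)}b(x,y)$ for an infinitely large $\mathcal{N}$, but the idea is identical. Where you genuinely diverge is the final implication. The paper tests the energy against the indicator functions $\phi_{k}=1_{B_{n_k}(a)}$, whose energy is exactly $b(\partial B_{n_k}(a))$, and so deduces null capacity directly from condition (ii) along the subsequence. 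You instead build the classical Nash--Williams potential $\phi_{n}(x)=1-S_{d(a,x)}/S_{n}$ and compute $Q(\phi_{n})=1/S_{n}$, deducing null capacity from condition (i); your computation is sound (edges within a sphere contribute nothing since $\phi_n$ is radial, and edges in $\partial B_r(a)$ with nonzero weight necessarily join sphere $r-1$ to sphere $r$). The paper's route is shorter; yours buys the quantitative estimate $\P_{n}(a)\preceq \bigl(\sum_{r=1}^{n}1/b(\partial B_{r})\bigr)^{-1}$ for \emph{arbitrary} locally finite connected graphs, extending the exact formula of Theorem~\ref{thm::weaksphersym} to a one-sided bound, and it proceeds along the full sequence of balls rather than a subsequence. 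Your remarks that the $r=0$ term must be discarded (as $B_0(a)=\emptyset$) and that $b(\partial B_r)\succ 0$ for $r\ge 1$ by connectedness and infiniteness are correct and address a small imprecision in the statement itself.
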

\begin{proof} The equivalence ``(i) $ \Leftrightarrow $ (ii)'' is due to Lemma \ref{lem:convtoinfty} in non-Archimedean fields with countable cofinality.
	
The equivalence  ``(ii) $ \Leftrightarrow $ (iii)'' follows from the estimate
	\[
	0 \preceq b(\partial B_{n}(a)) \preceq \mathcal{N}  \max_{\substack{d(x, a) =n \\ d(y,a) =n -1}} b(x,y),
	\]
	where $\mathcal N$ is any infinitely large element in $\K$.
	
For the final implication, consider the test functions $\phi_{k} := 1_{B_{n_k}(a)}$, $k \in \mathbb N$, which are in $ C_c(V) $ due to local finiteness.
Then,
\begin{align*}
&Q(\phi_{k}) = \frac{1}{2} \sum_{x,y} b(x,y) (\phi_{k}(x)- \phi_{k}(y))^2 = b(\partial B_{n_k}(a))\to0
\end{align*}
as $k \to \infty $. Hence, the statement follows from Theorem~\ref{Lem::inf}.
\end{proof}
\begin{remark}
The classical Nash-Williams test \cite{NW} states that a graph over the real field $\mathbb R$ has null capacity if condition (i) above is fulfilled. In the real setting one only has the  implications (i) $ \Rightarrow $ (ii) $ \Rightarrow $ (iii). Moreover, the capacity type of graphs over $\mathbb R$ with weights $b(x,y) \in \{0,1\}$ corresponds to recurrence/transience of simple random walks on graphs, which is a highly non-trivial question~\cite{Woess}. This is in sharp contrast to Corollary~\ref{lem:BoundedWeights}. The criteria in Corollary~\ref{lem:BoundedWeights} and Theorem~\ref{NWcond} thus indicate that graphs over $\mathbb R$ and non-Archimedean fields have rather different properties.
\end{remark}

The Nash-Williams test, Theorem \ref{NWtest}, has the following immediate implication.

\begin{corollary} \label{cor:LimitZeroWeights}
Suppose that the edge weights $b(x,y)$  tend  to zero towards infinity, i.e.,
for every $\tau \succ0$ the set $ \{(x,y)\mid b(x,y) \succ \tau\} $ is finite, then
 the graph has null capacity.
\end{corollary}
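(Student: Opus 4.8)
The plan is to derive this as a direct consequence of the Nash-Williams test, Theorem \ref{NWtest}, by verifying condition (iii) for a suitable sequence of balls about any fixed vertex $a \in V$. The hypothesis says that for every $\tau \succ 0$ only finitely many edges $(x,y)$ satisfy $b(x,y) \succ \tau$; equivalently, the maximal edge weight over the complement of any sufficiently large finite set is $\preceq \tau$. So the heart of the matter is to show that along \emph{some} subsequence of radii $n_k$, the maximum $\max_{(x,y) \in \partial B_{n_k}(a)} b(x,y)$ tends to $0$.

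First I would fix $a \in V$ and observe that the edge boundaries $\partial B_n(a)$, $n \in \mathbb{N}$, are pairwise disjoint (an edge $(x,y)$ with $d(a,x) = n$, $d(a,y) = n-1$ can only lie in $\partial B_n(a)$) and that each $\partial B_n(a)$ is finite by local finiteness. Now I would argue by contradiction: suppose there is no subsequence with $\max_{(x,y) \in \partial B_{n_k}(a)} b(x,y) \to 0$. By the characterization of convergence to $0$ in the order topology, this means there is some $\tau_0 \succ 0$ such that for all but finitely many $n$ (in fact, for infinitely many $n$ suffices to run the argument, but one gets ``all but finitely many'' cleanly), $\max_{(x,y) \in \partial B_n(a)} b(x,y) \succ \tau_0$; equivalently, for infinitely many $n$ there exists an edge $(x_n, y_n) \in \partial B_n(a)$ with $b(x_n, y_n) \succ \tau_0$. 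Since the boundaries are pairwise disjoint, these edges are all distinct, producing infinitely many edges with weight $\succ \tau_0$, contradicting the hypothesis applied with $\tau = \tau_0$.

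Hence the required subsequence exists, condition (iii) of Theorem \ref{NWtest} holds, and the graph has null capacity by that theorem. I expect the main (minor) obstacle to be the careful handling of ``convergence to $0$ in the order topology of a non-Archimedean field'': failure to converge to $0$ is not simply ``bounded below by a fixed quantity along a subsequence'' in full generality, but since the quantities $\max_{(x,y)\in \partial B_n(a)} b(x,y)$ take values in $\K^+ \cup \{0\}$ and we only need to produce \emph{one} infinitely-often-recurring threshold $\tau_0 \succ 0$, the negation of condition (iii) directly hands us such a $\tau_0$ — there must be infinitely many $n$ with $\max_{(x,y) \in \partial B_n(a)} b(x,y) \not\preceq \tau_0$ for some $\tau_0 \succ 0$, since otherwise the sequence would converge to $0$. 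This is the one spot worth stating precisely; everything else is immediate.
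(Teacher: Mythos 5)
Your proof is correct and follows exactly the paper's route: the paper likewise deduces the corollary from condition (iii) of Theorem \ref{NWtest}, and your verification of (iii) — disjointness of the edge boundaries $\partial B_n(a)$ plus the finiteness hypothesis forcing $\max_{(x,y)\in\partial B_n(a)}b(x,y)\to 0$ — is the intended (and in fact unstated) detail. The only remark is that no contradiction is needed: the hypothesis directly gives that the \emph{full} sequence of maxima converges to $0$, since any finite set of edges meets only finitely many of the pairwise disjoint boundaries.
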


\begin{proof} This follows by applying the condition (iii) from Theorem \ref{NWtest} above.
\end{proof}

Next, we study the Nash-Williams test in the special case of weakly spherically symmetric graphs. Observe that in this special case the criterion becomes indeed a characterization.

\begin{corollary}[Nash-Williams test is a characterization for weakly spherically symmetric graphs]\label{NWtestTree}
On a weakly  spherically symmetric graph the following statements are equivalent:
\begin{enumerate} 
\item[(i)]
The graph has null capacity.
\item[(ii)]
There exists a subsequence of balls $(B_{n_k}(o))$ such that
$$
\lim_{{n_k}\to \infty} b(\partial B_{n_k}(o))=0.
$$
\end{enumerate}
\end{corollary}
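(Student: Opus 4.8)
The plan is to derive this corollary directly from two results already established in the excerpt: the Nash--Williams test, Theorem~\ref{NWtest}, and the explicit capacity formula for weakly spherically symmetric graphs, Theorem~\ref{thm::weaksphersym}. The implication (ii) $\Rightarrow$ (i) is nothing but the final assertion of the Nash--Williams test applied with the reference vertex $a = o$, so no work is needed there. The content is therefore entirely in the converse (i) $\Rightarrow$ (ii), where we must exploit the extra rigidity coming from weak spherical symmetry.

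For (i) $\Rightarrow$ (ii), I would argue as follows. Suppose the graph has null capacity. By the capacity formula in Theorem~\ref{thm::weaksphersym}, this forces
$$
\sum_{k=0}^{\infty}\frac{1}{b(\partial B_{k}(o))}=\infty,
$$
since otherwise the series would converge to an element of $\K$ by ``a student's dream'' (Proposition~\ref{prop:studentsdream}), giving $\P(o)\succ 0$ or, in the degenerate case of a convergent nonzero reciprocal, at any rate a nonzero capacity, contradicting null capacity. (More precisely: if the series does not diverge to $\infty$ then by Lemma~\ref{lem:convtoinfty} its terms $1/b(\partial B_k(o))$ are bounded and have no subsequence tending to $\infty$; one then checks via Proposition~\ref{prop:studentsdream} that this is incompatible with $\P(o)=0$, because $\P_n(o)=\big(\sum_{k=1}^{n-1}1/b(\partial B_k(o))\big)^{-1}$ cannot tend to $0$ unless the partial sums tend to $\infty$.) Once $\sum_{k}1/b(\partial B_k(o))=\infty$ is in hand, Lemma~\ref{lem:convtoinfty} yields a subsequence with $1/b(\partial B_{n_k}(o))\to\infty$, which is exactly $b(\partial B_{n_k}(o))\to 0$, i.e.\ statement (ii).

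Alternatively — and perhaps more cleanly — one can simply invoke the chain of equivalences in Theorem~\ref{thm::weaksphersym}(1): null capacity holds if and only if some subsequence $(b_+(n_k))$ satisfies $b_+(n_k)\to 0$, and then the sandwich estimate $b_+(k)\preceq b(\partial B_{k+1}(o))\preceq \mathcal N b_+(k)$ (recorded at the end of the proof of Theorem~\ref{thm::weaksphersym}) transfers this to $b(\partial B_{n_k+1}(o))\to 0$, giving (ii) after reindexing. I would present this second route as the main line and mention the first as a remark, since it makes transparent that the only input beyond the general Nash--Williams test is the monotone-limit formula available under weak spherical symmetry.

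The main obstacle — really the only subtlety — is justifying the step ``null capacity $\Rightarrow$ the Nash--Williams series diverges'' in a non-Archimedean field, where one cannot argue as over $\mathbb{R}$ that a decreasing positive sequence with convergent-to-zero limit forces the reciprocals to be summable in the Archimedean sense. Here one must be careful to use Lemma~\ref{lem:convtoinfty} (valid by countable cofinality) rather than any Archimedean intuition, and to use Proposition~\ref{prop:studentsdream} to rule out the ``divergent'' behaviour of the partial sums sneaking in. Everything else is bookkeeping with the formula $\P_n(o)=\big(\sum_{k=1}^{n-1}1/b(\partial B_k(o))\big)^{-1}$ and the equivalences already proved in Theorem~\ref{thm::weaksphersym}.
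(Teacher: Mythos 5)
Your proposal is correct, and the route you single out as the main line --- (ii)$\Rightarrow$(i) from Theorem~\ref{NWtest}, and (i)$\Rightarrow$(ii) from Theorem~\ref{thm::weaksphersym}(1) together with the identity/sandwich relating $b_{+}(n_k)$ to $b(\partial B_{n_k+1}(o))$ --- is exactly the paper's proof. The first, longer route via divergence of the Nash--Williams series and Lemma~\ref{lem:convtoinfty} is also sound but unnecessary.
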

\begin{proof}
The implication  (ii)$ \Rightarrow $(i) is immediate from Theorem \ref{NWtest} above.
On the other hand, if the graph has null capacity, i.e., (i) then $ b_{+}(n_{k})\to 0 $ for some subsequence $ (n_{k}) $ by Theorem~\ref{thm::weaksphersym}. However, this is equivalent to (ii)
since $  b(\partial B_{n_k+1}(o))=\#S_{n_k}b_{+}(n_{k}) $. 
\end{proof}

\section{Hardy inequality}\label{sec:hardy}

In the classical setting it is well known that a Hardy inequality on the graph fails to hold if and only if the graph has null capacity, see e.g. \cite[Chapter 6]{KellerBook}. On non-Archimedean fields it is possible to deduce a similar statement, see Theorem~\ref{Hardy} below. As an important application of the Hardy inequality we discuss an uncertainty-type principle afterwards. 

\begin{definition}[Hardy weight]
A {\em  Hardy weight} is a non-trivial function $\omega: V\to \K^+\cup\{0\}$ such that the corresponding {\em Hardy inequality}
$$
Q(\phi)\succeq \sum_{x\in V}\phi^2(x)\omega(x), \qquad \phi \in C_c(V),
$$
holds true.
\end{definition}

The following is the main result of this section.

\begin{theorem}[Hardy inequality]\label{Hardy}
The following statements are equivalent:
\begin{enumerate}[(i)]
 \item\label{Hardy1} The graph is not of null capacity.
\item\label{Hardy3} The Hardy inequality holds for some strictly positive Hardy weight.
\item\label{Hardy2} The Hardy inequality holds for some Hardy weight.
\item\label{Hardy4} There exists  a non-trivial weight $\omega: V\to \K^+\cup\{0\}$ such that
\[ Q(\phi) \succeq \left(\sum_{x\in V} \phi (x)\omega (x)\right)^2, \qquad \phi \in C_c(V).  \]
\end{enumerate}
\end{theorem}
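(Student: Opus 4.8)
The plan is to prove the cycle of implications $\eqref{Hardy1} \Rightarrow \eqref{Hardy4} \Rightarrow \eqref{Hardy3} \Rightarrow \eqref{Hardy2} \Rightarrow \eqref{Hardy1}$. The two easy implications are $\eqref{Hardy3} \Rightarrow \eqref{Hardy2}$, which is trivial since a strictly positive Hardy weight is in particular a Hardy weight, and $\eqref{Hardy4} \Rightarrow \eqref{Hardy3}$: starting from the weight $\omega$ in \eqref{Hardy4}, pick any vertex $x_0$ with $\omega(x_0) \succ 0$ and test with functions supported suitably to recover a pointwise control; more directly, from $Q(\phi) \succeq (\sum_x \phi(x)\omega(x))^2$ one deduces $Q(\phi) \succeq \phi^2(x_0)\omega(x_0)^2/C$ by Lemma~\ref{lem:FTC}-type path arguments combined with the given inequality, and then one spreads this to a strictly positive weight by summing a convergent series of such single-site weights along an exhaustion (using that the field is Cauchy complete and has countable cofinality, so one can rescale the contributions to make the sum converge). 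I would phrase the passage from one site to all sites by fixing an enumeration $x_0, x_1, \dots$ of $V$, and for each $j$ noting $Q(\phi) \succeq c_j \phi^2(x_j)$ with $c_j \succ 0$; then choosing positive $\lambda_j$ with $\sum_j \lambda_j$ convergent (possible by countable cofinality, e.g. using infinitesimals) and $\lambda_j \preceq 1$, the weight $\omega'(x_j) := \lambda_j c_j$ is strictly positive and $\sum_j \omega'(x_j)\phi^2(x_j) \preceq (\sum_j \lambda_j) Q(\phi)$; absorbing the constant $\sum_j \lambda_j$ into a rescaling gives \eqref{Hardy3}.

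The implication $\eqref{Hardy2} \Rightarrow \eqref{Hardy1}$ is the contrapositive of: null capacity forbids any Hardy weight. Suppose the graph has null capacity and $\omega$ is a Hardy weight with $\omega(x_0) \succ 0$ for some $x_0$. By Corollary~\ref{cor::rec-nullseq} there is a sequence $\phi_k \in C_c(V)$ with $\phi_k \to 1$ pointwise and $Q(\phi_k) \to 0$. Then $\sum_x \phi_k^2(x)\omega(x) \succeq \phi_k^2(x_0)\omega(x_0) \to \omega(x_0) \succ 0$, contradicting $Q(\phi_k) \succeq \sum_x \phi_k^2(x)\omega(x)$ since the left-hand side tends to $0$. (One must be slightly careful that $\phi_k^2(x_0) \to 1$ in the order topology, which follows from $\phi_k(x_0)\to 1$; and the single-term lower bound on the sum is valid because all terms are non-negative.)

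The heart of the argument, and the main obstacle, is $\eqref{Hardy1} \Rightarrow \eqref{Hardy4}$, i.e.\ producing an explicit Hardy weight when the graph is not of null capacity. By Theorem~\ref{Thm::ind} the graph then has either positive or divergent capacity; in both cases $\P_n(a) = \Delta v_n(a) m(a)$ stays bounded below — wait, actually it decreases, so the right quantity to use is a ground-state-substitution-type construction. The plan is to fix $a \in V$ and use the solutions $v_n$ of \eqref{dirpr} on $B_n(a)$; since the graph is not null-capacity, by Theorem~\ref{Thm::solutions} there is a vertex where $v_n$ does \emph{not} converge to $1$, and more usefully the sequence $w_n := 1 - v_n$ satisfies $\Delta w_n = \Delta(-v_n) = 0$ off $\{a\}$ (on $B_n(a)\setminus\{a\}$) with $\Delta w_n(a) = -\Delta v_n(a) \prec 0$. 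The classical real-variable recipe (see \cite[Ch.~6]{KellerBook}) is: if $h \succ 0$ is superharmonic, $\Delta h \succeq 0$, then $\omega := \Delta h / h$ is a Hardy weight via the ground state transform $Q(\phi) = \tfrac12\sum_{x,y} b(x,y) h(x) h(y) (\tfrac{\phi(x)}{h(x)} - \tfrac{\phi(y)}{h(y)})^2 + \sum_x \tfrac{\Delta h(x)}{h(x)}\phi^2(x)$. I would take $h$ to be a pointwise limit (or a suitable cluster construction, if the limit fails to exist we instead use a fixed large finite approximation) of $1 - v_n$ renormalized, or the Green function $G(\cdot, a)$ in the positive-capacity case. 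The genuine difficulty is the divergent-capacity case, where monotone limits need not exist: here I expect one must argue that even though $\P(a)$ does not converge, the bounded decreasing sequence still has enough structure — perhaps one works with $\phi \mapsto Q(\phi) - (\sum_x \phi(x)\omega(x))^2$ directly and shows it is a nonnegative form by a finite-dimensional computation on each $B_n(a)$, passing to the limit only in the inequality (which is preserved under limits in an ordered field) rather than in the construction of $\omega$. Concretely: on $B_n(a)$ the quadratic form $\phi \mapsto Q_{B_n}(\phi)$ restricted to $\phi(a)=0$ is positive definite (Lemma~\ref{rem::modDprFin}), and one extracts $\omega_n$ from the bottom of its "spectrum" relative to $a$; the uniform lower bound $\P_n(a) \succeq$ (something non-infinitesimal, or at least the fact that $\P_n(a)$ does not tend to $0$) is what prevents $\omega_n$ from degenerating, and the coefficients being in $C(B_n)$ lets one pass to a well-defined limit weight $\omega$ supported on all of $V$ with the desired inequality. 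I would present \eqref{Hardy1} $\Rightarrow$ \eqref{Hardy4} via this ground state transform, isolating as the key computational lemma the identity
\[
Q(\phi) = \frac12 \sum_{x,y \in V} b(x,y)\, h(x)h(y)\left(\frac{\phi(x)}{h(x)} - \frac{\phi(y)}{h(y)}\right)^2 + \sum_{x\in V} \frac{\Delta h(x)}{h(x)}\, \phi^2(x), \qquad \phi \in C_c(V),
\]
valid for any $h \succ 0$ on $V$, which is a purely algebraic rearrangement (no limits, no order-completeness needed) and therefore transfers verbatim from the real case; then $\omega := \Delta h / h \succeq 0$ whenever $h$ is superharmonic, and the first sum on the right is $\succeq \big(\sum_x \frac{\Delta h(x)}{h(x)} \phi(x) \cdot (\text{something})\big)^2$ — no, more simply $\omega$ itself already gives \eqref{Hardy3}, and \eqref{Hardy3} $\Rightarrow$ \eqref{Hardy4} would need Cauchy–Schwarz in the form $\sum_x \phi(x)\omega(x) = \sum_x \phi(x)\sqrt{\omega(x)}\sqrt{\omega(x)}$, which requires square roots — available since we may take $\K$ real-closed or at least restrict \eqref{Hardy4} to the stated form. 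So the cleanest route is $\eqref{Hardy1} \Rightarrow \eqref{Hardy3}$ by the ground state transform with a positive superharmonic $h$, then $\eqref{Hardy3}$ trivially gives $\eqref{Hardy2}$, and $\eqref{Hardy4}$ is handled as a separate equivalent reformulation using that $\big(\sum_x \phi(x)\omega(x)\big)^2 \preceq \big(\sum_x \phi^2(x)\sqrt{\omega(x)}\,\big)\big(\sum_x \sqrt{\omega(x)}\,\big)$ is \emph{not} what we want — rather, one takes in \eqref{Hardy4} the weight $\omega$ to be supported at finitely many points, or invokes that the existence of a positive superharmonic $h$ with $\Delta h$ finitely supported (the Green function!) yields $Q(\phi) \succeq \frac{\Delta h(a)}{h(a)} \phi^2(a) \succeq$ const, from which a single-site version of \eqref{Hardy4} follows directly. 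I expect the existence of such a positive superharmonic function $h$ in the divergent-capacity case (Section~\ref{sec:sup} of the paper flags this as partly open) to be exactly where the proof must instead fall back on the uniform-lower-bound-on-$\P_n$ argument rather than a limiting superharmonic function; that balancing act is the main obstacle.
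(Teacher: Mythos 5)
Your outer scaffolding is sound: the trivial implication from a strictly positive Hardy weight to some Hardy weight, your argument that null capacity forbids any Hardy weight (testing against the null sequence from Corollary~\ref{cor::rec-nullseq} and using that all terms $\phi_k^2(x)\omega(x)$ are nonnegative), and the Cauchy--Schwarz link between a Hardy weight and the squared--sum inequality in \eqref{Hardy4} are all essentially what the paper does. But the proposal has a genuine gap at the central implication, from ``not of null capacity'' to the existence of a (strictly positive) Hardy weight. Your primary plan --- ground state transform applied to a positive non-harmonic superharmonic function --- cannot close in the divergent-capacity case, because the existence of such a function there is exactly what Section~\ref{sec:sup} leaves open; worse, the paper's Theorem~\ref{thm::postsHNrec} derives existence of such functions \emph{from} the Hardy inequality, so your route is circular in spirit. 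The fallback you gesture at (``uniform lower bound on $\P_n$'') is the right idea but you never state the one elementary fact that makes it work: since $(\P_n(x))_n$ is positive and \emph{decreasing} (Proposition~\ref{Thm:monotonicity}), ``does not converge to $0$'' is equivalent to ``bounded below by some $m_x\succ 0$''; combined with the variational identity $\P_n(x)=\min\{Q(\phi): \phi\in C_c(B_n(x)),\ \phi(x)=1\}$ from Proposition~\ref{lemma::PLap}, this gives the single-site Hardy inequality $Q(\phi)\succeq m_x\,\phi^2(x)$ for every $\phi\in C_c(V)$ and every $x$ (using Theorem~\ref{Thm::ind} to get this at every vertex). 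Once you have that, the enumeration-plus-convergent-coefficients construction you already describe --- choose $c:V\to\K^+$ with $\sum_x c_x=1$ via countable cofinality and set $\omega(x)=m_xc_x$ --- is precisely the paper's proof and finishes the implication with no superharmonic function in sight.

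A second, smaller defect: your proposed passage from \eqref{Hardy4} to a strictly positive Hardy weight, namely deducing $Q(\phi)\succeq c\,\phi^2(x_0)$ from $Q(\phi)\succeq\bigl(\sum_x\phi(x)\omega(x)\bigr)^2$ by ``path arguments,'' does not work as stated: for sign-changing $\phi$ the sum $\sum_x\phi(x)\omega(x)$ can vanish while $\phi(x_0)$ is large, so the hypothesis gives no pointwise control. The paper closes the cycle instead by proving \eqref{Hardy4}~$\Rightarrow$~\eqref{Hardy1} directly: test the inequality against the Dirichlet solutions $v_n$ on $B_n(x)$, which are \emph{positive} with $v_n(x)=1$, so that $\sum_y\omega(y)v_n(y)\succeq\omega(x)\succ 0$ with no cancellation, whence $\P_n(x)=Q(v_n)\succeq\omega(x)^2$ is bounded away from zero. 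Finally, for the implication from a Hardy weight to \eqref{Hardy4} one should normalize $\omega$ to satisfy $\sum_x\omega(x)\preceq 1$ and to be the square of a positive function before applying Cauchy--Schwarz (squares are needed since $\K$ is not assumed real-closed); you circle around this point but do not settle it.
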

\begin{proof}
\eqref{Hardy1} $\implies$ \eqref{Hardy3}: Assume that the graph is not of null capacity (i.e., positive or divergent capacity). Then for each $ x\in V $ exists a positive lower bound  $m_{x}\succ 0$ such that  
$$
Q(v_n)=\P_n(x)\succeq m_{x},
$$
for all $ n\in \mathbb{N} $,
where $ v_{n} $ is the solution of the \eqref{dirpr} for $ B_{n}(x) $ and the first equality holds according to Proposition~\ref{lemma::PLap}. Now,  the vertex set $ V $ is countable.
 Since countable cofinality is equivalent to the existence of a function $c:V\to\mathbb{K}^+$ with $\sum_{x\in V} c_x=1$ (see \cite[Theorem~6]{Clark}), we can define a strictly positive function $\omega:V\to \K^+$ by
$$
\omega (x)= m_{x} c_{x} .
$$
For $\phi\in C_c(V)$, by Proposition~\ref{lemma::PLap} we have for all sufficiently large $n\in \Bbb N$ that
$$
\sum_{x\in V}\omega(x)\phi^2(x)=\sum_{x\in V}c_{x}m_{x}\phi^2(x)\preceq\sum_{x\in V}c_{x}\P_n(x)\phi^2(x)\preceq \sum_{x, \phi(x)\neq 0}c_{x}Q\left(\frac{\phi}{\phi(x)}\right)\phi^2(x)\preceq Q(\phi).
$$

\eqref{Hardy3} $\implies$ \eqref{Hardy2}: This is clear.

\eqref{Hardy2} $\implies$ \eqref{Hardy4}: Assume there is a  Hardy weight $ \omega $. Since any smaller function is also a Hardy weight we can assume without loss of generality that $ \sum_{x}\omega(x)\preceq 1 $ and that $ \omega $ is the square of a non-trivial positive function. Thus, the statement follows directly by the Cauchy--Schwarz inequality
	\[ \left(\sum_{x\in V} \phi (x)\omega (x)\right)^2\preceq \left( \sum_{x\in V} \phi^2 (x)\omega (x) \right)\left( \sum_{x\in V} \omega (x)\right)\preceq  Q(\phi).\] 
\eqref{Hardy4} $\implies$ \eqref{Hardy1}: Let $ x \in V$
be such that $ \omega (x)\succ 0 $. For $n\in \mathbb N$, let $ v_{n} $ be the solution of the Dirichlet problem~\eqref{dirpr} on $ B_{n}(x) $. Then, since $ v_{n}\succ 0 $ on $ B_{n}(x) $ and $ v_{n}(x) =1$ we have
\begin{align*}
	0\prec \omega (x)^{2}\preceq \left(\sum_{y\in B_{n}(x)} \omega(y) v_{n}(y)\right)^{2} \preceq Q(v_{n}) =\P_{n}(x)
\end{align*}
for all $n\in \Bbb N$.
As there is a lower bound on $ \P_{n}(x) $, the quantity cannot converge to zero as $ n\to\infty $. Therefore, the graph is not of null capacity .
\end{proof}
\begin{remark}
By going in the proof directly from \eqref{Hardy3} to \eqref{Hardy4} it is not hard to see that the weight $ \omega  $ in \eqref{Hardy4}  can also be chosen strictly postive.
\end{remark}

\begin{remark}[Uncertainty-type principle]
Assume that the graph is not of null capacity with a strictly positive Hardy weight $\omega$ and the underlying field is real-closed. (That means in particular that one can take square roots of positive elements in $ \mathbb{K}$.) Then by the Cauchy--Schwarz and Hardy inequality, we derive for all $\phi\in C_c(V)$ that
\begin{align*}
	\sum_{x\in V}\phi^2 (x)&= \sum_{x\in V} \bigl( \omega^{-1/2}(x)\phi(x)\bigr)\bigl( \omega^{1/2}(x)\phi(x)\bigr) \\
	&\preceq \left( \sum_{x\in V}  \frac{\phi^2(x)}{\omega(x)} \right)^{1/2} \left( \sum_{x\in V}  \omega(x)\phi^2(x)\right)^{1/2}\\
	&\preceq  \left( \sum_{x\in V}   \frac{\phi^2(x)}{\omega(x)}\right)^{1/2} \cdot  Q^{1/2}(\phi).
\end{align*}
This is a version of the Heisenberg--Pauli--Weyl inequality on graphs which is also known as uncertainty principle. An interpretation is the following: the position and momentum of a particle can not be determined simultaneously, if the graph is not of null capacity.
\end{remark}

\begin{remark}[Largest Hardy weights] 
Theorem~\ref{Hardy} says that existence of a Hardy weight is equivalent to having either divergent or positive capacity. We mention briefly that divergent and positive capacity graphs can be further distinguished by existence of Hardy weights with a certain maximality property.

Let $a \in V$. If the graph has positive capacity, then $\omega_a = \P(a) \cdot 1_a$ is a Hardy weight. Moreover, $\omega_a$ is maximal at $a$ in the sense that $\omega(a) \preceq \omega_a(a) = \P(a)$ for all Hardy weights $\omega$. On the other hand, if the graph has divergent capacity, then there exists no Hardy weight $\omega$ with this maximality property. More precisely, for every Hardy weight $\omega$, there exists a Hardy weight $\omega'$ with $\omega(a)\prec \omega'(a)$. These claims are straightforward consequences of Theorem~\ref{Lem::inf}.

\end{remark}

\section{Positive superharmonic functions}\label{sec:sup}
In this section we discuss  existence of positive superharmonic functions. We consider a graph $ b $ over $ (V,m) $ and a non-Archimedean ordered field $\K$.  A function $u:V\to \mathbb{K}$ is called {\em superharmonic} if $\Delta u\succeq 0$.

\subsection{Ground state transform and local Harnack inequality}
In this subsection we discuss two standard tools to study superharmonic functions. The first is a ground state transform and the second one is a local Harnack inequality.  For analogous results on graphs over the real field see e.g. \cite{HaeselerKeller,KellerBook,KellerPinchoverPogorzelski} and references therein. While the proofs carry over verbatim from this case we still give them here for the sake of being self-contained.

\begin{lemma}\label{cor::gst1}
For all $\phi\in C_c(V)$ and $u:V\to \mathbb{K}$ 
\begin{equation*}
\Delta\left(u\cdot\phi(x)\right)={\phi(x)}\Delta{u(x)}+ \frac{1}{m(x)}\sum_{y\in V}b(x,y)u(y)(\phi(x)-\phi(y)),
\end{equation*}
and if $u\succ 0$, then
\begin{equation*}
	\Delta\phi(x)=\dfrac{\phi(x)}{u(x)}\Delta{u(x)}+ \dfrac{1}{u(x)m(x)}\sum_{y\in V}b(x,y)u(y)u(x)\left(\dfrac{\phi(x)}{u(x)}-\dfrac{\phi(y)}{u(y)}\right).
\end{equation*}
\end{lemma}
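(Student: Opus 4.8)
The plan is to prove both identities by a direct computation from the definition of the Laplacian, expanding $\Delta f(x) = \frac{1}{m(x)}\sum_{y\in V} (f(x)-f(y))b(x,y)$. This is a routine but instructive calculation, often called a \emph{product rule} or \emph{ground state transform} identity. Since $\phi$ is compactly supported, all the sums involved are finite, so there are no convergence issues to worry about; the only subtlety is bookkeeping the algebra carefully.

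For the first identity, I would start with $\Delta(u\cdot\phi)(x) = \frac{1}{m(x)}\sum_{y\in V} \big(u(x)\phi(x) - u(y)\phi(y)\big)b(x,y)$ and insert a telescoping term, writing
\[
u(x)\phi(x) - u(y)\phi(y) = \phi(x)\big(u(x)-u(y)\big) + u(y)\big(\phi(x)-\phi(y)\big).
\]
Summing against $b(x,y)/m(x)$, the first group gives exactly $\phi(x)\Delta u(x)$ (pulling the constant $\phi(x)$ out of the sum over $y$), and the second group gives the claimed remainder term $\frac{1}{m(x)}\sum_{y\in V} b(x,y)u(y)(\phi(x)-\phi(y))$. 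This establishes the first formula.

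For the second identity, assuming $u \succ 0$ everywhere, I would apply the first identity with $\phi$ replaced by $\phi/u$ (which is again a legitimate function $V\to\K$ since $u$ is nowhere zero), so that $u\cdot(\phi/u) = \phi$. This yields
\[
\Delta\phi(x) = \frac{\phi(x)}{u(x)}\Delta u(x) + \frac{1}{m(x)}\sum_{y\in V} b(x,y)\,u(y)\left(\frac{\phi(x)}{u(x)} - \frac{\phi(y)}{u(y)}\right).
\]
To match the stated form, I would rewrite $u(y) = \frac{1}{u(x)}u(y)u(x)$ inside the sum, pulling the factor $1/u(x)$ out front, which gives precisely the displayed right-hand side $\frac{\phi(x)}{u(x)}\Delta u(x) + \frac{1}{u(x)m(x)}\sum_{y\in V} b(x,y)u(y)u(x)\big(\frac{\phi(x)}{u(x)} - \frac{\phi(y)}{u(y)}\big)$.

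I do not anticipate any real obstacle here: the proof is purely algebraic and does not use the order structure, completeness, or countable cofinality of $\K$ in any essential way beyond the fact that $\K$ is a field, so it carries over verbatim from the real case as the paper notes. The only point requiring minor care is that the substitution $\phi \mapsto \phi/u$ in the second part requires $u$ to be strictly positive (or at least nowhere vanishing), which is exactly the hypothesis $u \succ 0$.
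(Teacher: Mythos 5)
Your proof is correct and follows essentially the same route as the paper: the same telescoping decomposition $u(x)\phi(x)-u(y)\phi(y)=\phi(x)(u(x)-u(y))+u(y)(\phi(x)-\phi(y))$ for the first identity, and the same substitution $\phi\mapsto \phi/u$ for the second. The extra remarks on local finiteness and on $\phi/u$ being well-defined when $u\succ 0$ are accurate but add nothing beyond what the paper's one-line argument already implies.
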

\begin{proof}
We compute straightforwardly
\begin{align*}
\Delta\left(u\cdot\phi(x)\right)
&=\frac{1}{m(x)}\sum_{y\in V}\left(u(x)\phi(x)-u(y)\phi(x)+u(y)\phi(x)-u(y)\phi(y)\right)b(x,y)\\
&=\frac{1}{m(x)}\sum_{y\in V}\left(u(x)-u(y)\right)\phi(x)b(x,y)+\frac{1}{m(x)}\sum_{y\in V}u(y)\left(\phi(x)-\phi(y)\right)b(x,y)
\end{align*}
and the statement follows. For the second statement consider the  function ${\phi}/{u}$ instead of $\phi$.
\end{proof}

\begin{theorem}[Ground state transform]\label{thm::gst}  
For all $\phi\in C_c(V)$ and $u:V\to \Bbb K$ with $u\succ 0$ 
\begin{equation*}
Q(\phi)-\langle\frac{\Delta u}{u}\phi, \phi \rangle=Q_u\left(\dfrac{\phi}{u}\right),
\end{equation*}
where
$$
Q_u(\psi):=\dfrac{1}{2}\sum_{x,y\in V} b(x,y)u(x)u(y)\left(\psi(x)-\psi(y)\right)^2 ,\qquad \psi\in C_c(V).
$$
\end{theorem}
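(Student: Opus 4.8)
The identity to prove is the ground state transform:
\[
Q(\phi)-\Big\langle \tfrac{\Delta u}{u}\phi,\phi\Big\rangle = Q_u\Big(\tfrac{\phi}{u}\Big)
\]
for $\phi\in C_c(V)$ and $u\succ 0$. The natural route is a direct computation, leveraging Lemma~\ref{cor::gst1} together with Green's formula \eqref{GreenfOF}. First I would write $\langle \frac{\Delta u}{u}\phi,\phi\rangle = \sum_{x} \Delta u(x)\,\frac{\phi^2(x)}{u(x)}\,m(x)$, and apply Green's formula \eqref{GreenfOF} in the form $\langle \Delta u, g\rangle = Q(u,g)$ with the compactly supported test function $g = \phi^2/u$ (which lies in $C_c(V)$ since $u\succ 0$ everywhere and $\phi$ has finite support). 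This turns the middle term into $\frac12\sum_{x,y} (\nabla_{xy}u)\big(\nabla_{xy}(\phi^2/u)\big) b(x,y)$.

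Then the problem reduces to the purely algebraic edge-wise identity: for each pair $x,y$,
\[
(\phi(x)-\phi(y))^2 - (u(x)-u(y))\left(\frac{\phi^2(x)}{u(x)}-\frac{\phi^2(y)}{u(y)}\right) = u(x)u(y)\left(\frac{\phi(x)}{u(x)}-\frac{\phi(y)}{u(y)}\right)^2,
\]
after which summing against $\tfrac12 b(x,y)$ over all $x,y$ gives exactly $Q(\phi) - \langle\tfrac{\Delta u}{u}\phi,\phi\rangle = Q_u(\phi/u)$. This edge identity is verified by clearing denominators (multiply through by $u(x)u(y)$) and expanding both sides: the left side becomes $u(x)u(y)(\phi(x)-\phi(y))^2 - (u(x)-u(y))(u(y)\phi^2(x) - u(x)\phi^2(y))$, and a routine expansion shows this equals $(u(y)\phi(x) - u(x)\phi(y))^2$, which is precisely $u(x)u(y)^2\big(\phi(x)/u(x) - \phi(y)/u(y)\big)^2 \cdot \tfrac{1}{u(y)}\cdot u(x)$... more cleanly, $(u(y)\phi(x)-u(x)\phi(y))^2 = u(x)u(y)\big(\tfrac{\phi(x)}{u(x)}-\tfrac{\phi(y)}{u(y)}\big)^2 u(x)u(y)\cdot\tfrac{1}{u(x)u(y)}$; so one just confirms $u(x)u(y)(\phi(x)-\phi(y))^2 - (u(x)-u(y))(u(y)\phi^2(x)-u(x)\phi^2(y)) = (u(y)\phi(x)-u(x)\phi(y))^2$ by bare expansion.

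Alternatively — and this is perhaps cleaner to present — I would avoid Green's formula and instead multiply the pointwise identity of Lemma~\ref{cor::gst1} (second form) by $\phi(x)m(x)$ and sum over $x\in V$: the left side becomes $\sum_x \Delta\phi(x)\phi(x)m(x) = Q(\phi)$ by the already-stated consequence of Green's formula, the first term on the right becomes $\langle\tfrac{\Delta u}{u}\phi,\phi\rangle$, and the second term, after a symmetrization step exchanging the roles of $x$ and $y$ (as in the standard derivation of $Q(\phi) = \tfrac12\sum_{x,y}(\nabla_{xy}\phi)^2 b(x,y)$), collapses to $Q_u(\phi/u)$. I expect the only mild obstacle to be bookkeeping in that symmetrization — making sure all sums are finite so the reindexing $x\leftrightarrow y$ is legitimate, which holds because $\phi\in C_c(V)$ and the graph is locally finite, so every sum has finitely many nonzero terms. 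No non-Archimedean subtleties arise here since everything is a finite algebraic manipulation.
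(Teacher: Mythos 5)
Your proposal is correct, and your ``alternative'' second route is in fact the paper's own proof: the paper writes $Q(\phi)-\langle \frac{\Delta u}{u}\phi,\phi\rangle=\langle(\Delta-\frac{\Delta u}{u})\phi,\phi\rangle$ via Green's formula \eqref{GreenfOF}, substitutes the second identity of Lemma~\ref{cor::gst1}, and recognizes the result as $\langle \Delta_u\psi,\psi\rangle$ for the graph $b_u(x,y)=b(x,y)u(x)u(y)$ with $\psi=\phi/u$, so that a second application of \eqref{GreenfOF} (now for the graph $b_u$) performs exactly the symmetrization you describe; your finiteness remarks (compact support of $\phi$ plus local finiteness) are the right justification for that step. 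Your primary route is a genuine, if minor, variant: instead of invoking Lemma~\ref{cor::gst1}, you apply \eqref{GreenfOF} to $\langle \Delta u,\phi^2/u\rangle$ (legitimate since $\phi^2/u\in C_c(V)$ because $u\succ 0$ everywhere) and reduce everything to the edge-wise identity
\begin{equation*}
u(x)u(y)\bigl(\phi(x)-\phi(y)\bigr)^2-\bigl(u(x)-u(y)\bigr)\bigl(u(y)\phi^2(x)-u(x)\phi^2(y)\bigr)=\bigl(u(y)\phi(x)-u(x)\phi(y)\bigr)^2,
\end{equation*}
which indeed checks out by expansion (both sides equal $u(y)^2\phi^2(x)-2u(x)u(y)\phi(x)\phi(y)+u(x)^2\phi^2(y)$), and which is purely algebraic, hence valid over any field containing the relevant inverses. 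What this buys is independence from Lemma~\ref{cor::gst1} and from the second use of Green's formula; what it loses is the structural reading of the computation as ``conjugation by $u$ turns $\Delta-\frac{\Delta u}{u}$ into the Laplacian of the deformed graph $b_u$''. Either write-up is acceptable; the garbled sentence in the middle of your first route should simply be replaced by the clean identity displayed above.
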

In fact, $Q_u(\psi)$ is the energy of $\psi$ with respect to the graph $b_u(x,y)=b(x,y)u(x)u(y)$, $ x,y\in V $. 
\begin{proof}
Applying Green's formula \eqref{GreenfOF} first to the graph $ b $, then using Lemma~\ref{cor::gst1} and then applying \eqref{GreenfOF} to the graph $ b_{u} $ yields
\begin{align*}
Q(\phi)-\langle \frac{\Delta u}{u}\phi, \phi \rangle&=\langle(\Delta-\frac{\Delta u}{u})\phi, \phi \rangle=\sum_{x\in V}\dfrac{\phi(x)}{u(x)}\sum_{y\in V}b(x,y)u(y)u(x)\left(\dfrac{\phi(x)}{u(x)}-\dfrac{\phi(y)}{u(y)}\right)
=Q_u\left(\frac{\phi}{u}\right).
\end{align*}
This finishes the proof.
\end{proof}
Next, we show a local Harnack inequality.

\begin{theorem}[Local Harnack inequality]\label{localHarnack}
Let $W\subset V$ be finite and connected. Then there exists $C_W\in \Bbb K^+$ such that for every  $u:V\to \Bbb K^+\cup\{0\}$  which is superharmonic 
on $W$ we have
$$
\max_{x\in W} u(x)\preceq C_W\cdot\min_{x\in W}u(x).
$$
In particular, $u\succ 0$  on $W$ whenever $u\not\equiv 0$ on $W$.
\end{theorem}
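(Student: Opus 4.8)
The plan is to establish the Harnack inequality by a chaining argument along a path, where the single-edge estimate comes directly from superharmonicity at one vertex. First I would reduce to the case of neighboring vertices: if $x \sim y$ in $W$ and $\Delta u(x) \succeq 0$, then rewriting $\Delta u(x) \succeq 0$ as
\begin{align*}
	b(x) u(x) \succeq \sum_{z \in V} b(x,z) u(z) \succeq b(x,y) u(y),
\end{align*}
where we used $u \succeq 0$ on all of $V$ to drop the remaining nonnegative terms (note $b(x) = \sum_z b(x,z)$ is a finite positive sum by local finiteness and connectivity, so $b(x) \succ 0$). This yields $u(y) \preceq (b(x)/b(x,y)) u(x)$, i.e. a one-step comparison constant $c_{x,y} := b(x)/b(x,y) \in \K^+$ depending only on the graph, not on $u$.

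The main obstacle is that this one-step estimate only controls $u(y)$ from above by $u(x)$ when the superharmonicity hypothesis is used \emph{at} $x$; to get a two-sided bound over all of $W$ I need to be able to travel in both directions. The point is that for any two vertices $x, y \in W$ with $x \sim y$, superharmonicity holds at \emph{both} $x$ and $y$ (both lie in $W$), so I obtain both $u(y) \preceq c_{x,y} u(x)$ and $u(x) \preceq c_{y,x} u(y)$. Hence along any fixed path $x = x_0 \sim x_1 \sim \cdots \sim x_n = y$ inside $W$ (which exists since $W$ is connected — here one should be slightly careful and either note that $W$ connected means such a path exists within $W$, or pass to a path in $V$ and enlarge $W$ to a finite connected set containing it; the statement as phrased takes $W$ connected), multiplying the one-step estimates gives $u(y) \preceq \bigl(\prod_{i=1}^n c_{x_{i-1},x_i}\bigr) u(x)$.

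Next I would take $C_W$ to be the maximum over all ordered pairs $x, y \in W$ of the product of one-step constants along a chosen path in $W$ from $x$ to $y$; since $W$ is finite this is a maximum of finitely many elements of $\K^+$, hence a well-defined element of $\K^+$ independent of $u$. Then for the vertices $x_{\max}, x_{\min} \in W$ achieving $\max_W u$ and $\min_W u$ respectively, the chain estimate gives $u(x_{\max}) \preceq C_W u(x_{\min})$, which is the claimed inequality. Finally, for the ``in particular'' clause: if $u \not\equiv 0$ on $W$ then $\max_W u \succ 0$, so the inequality forces $\min_W u \succeq C_W^{-1} \max_W u \succ 0$, whence $u \succ 0$ everywhere on $W$. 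I expect the only subtlety to be the bookkeeping of which path to fix and making sure $C_W$ genuinely depends only on $W$ and $b$; the analytic content is entirely the elementary one-edge estimate above.
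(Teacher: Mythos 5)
Your proposal is correct and follows essentially the same route as the paper: the one-edge estimate $b(x)u(x)\succeq b(x,y)u(y)$ from superharmonicity, chained along a path in $W$ with constant $\prod_i b(x_i)/b(x_{i-1},x_i)$, and $C_W$ taken as a maximum over the finitely many pairs of vertices. The points you flag as potential subtleties (using $u\succeq 0$ on all of $V$ to drop terms, and the path staying in $W$) are handled the same way in the paper's proof.
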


\begin{proof} 
Rewriting $\Delta u\succeq 0$, we get
$$
b(x)u(x)\succeq \sum_{y\in V} b(x,y)u(y)\succeq b(x,z) u(z)
$$
for all $x\in W, z\sim x$. Let $x,y\in W$ and $x=x_0\sim x_1\sim \dots \sim x_n=y$ be a path in $W$. Then by the above
$$
u(x)=u(x_0)\preceq \dfrac{b(x_1)}{b(x_0,x_1)}u(x_1)\preceq \prod_{i=1}^n \dfrac{b(x_i)}{b(x_{i-1},x_i)}u(x_n)=C_{x,y}u(y).
$$
The maximum  $ C_{W} =\max_{x,y\ W} C_{x,y}$ exists due to finiteness of $ W $ and the statement follows. The ``in particular'' is obvious.
\end{proof}

\subsection{Necessary and sufficient conditions for existence}
In this subsection we prove the existence of positive superharmonic functions for  positive capacity graphs and non-existence for null capacity graphs. Afterwards, we consider the divergent capacity case and give sufficient criteria for existence.

\begin{theorem}\label{thm::postsHNrec}
(a) If there exists a  positive superharmonic function which is not harmonic, then the graph does not have null capacity. \\
(b) If the non-Archimedean field is real closed, then existence of a non-constant  positive superharmonic function implies that the graph does not have null capacity.
\end{theorem}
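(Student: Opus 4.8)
The two parts have the same flavour: I want to produce, from a positive superharmonic function $u$ that is \emph{not} harmonic (resp. not constant), a test sequence witnessing that the graph is not of null capacity. By Theorem~\ref{thm::postsHNrec}'s companion results (Corollary~\ref{cor::rec-nullseq} and Theorem~\ref{Lem::inf}), null capacity is equivalent to the existence of compactly supported functions converging pointwise to $1$ with energies tending to $0$; so the strategy is to assume null capacity and derive a contradiction with the superharmonicity of $u$.

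\textbf{Part (a).} Assume the graph has null capacity, and let $u \succ 0$ be superharmonic with $\Delta u(x_0) \succ 0$ for some vertex $x_0$ (here I use that $\Delta u \succeq 0$ everywhere but is not identically $0$). By Corollary~\ref{cor::rec-nullseq} pick $\phi_n \in C_c(V)$ with $\phi_n \to 1$ pointwise and $Q(\phi_n) \to 0$; by the local Harnack inequality, Theorem~\ref{localHarnack}, $u$ is bounded below by a positive constant on any fixed finite set, and in particular $u(x)\succ 0$ for all $x$. Now I would apply Green's formula \eqref{GreenfOF} in the form $\langle \Delta u, \phi_n^2 \rangle = Q(u, \phi_n^2)$ — both sides make sense since $\phi_n^2$ is compactly supported. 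On the left, as $\phi_n \to 1$ pointwise and the sum defining $\langle \Delta u, \phi_n^2\rangle$ is eventually over a fixed finite set containing $x_0$ (because $\phi_n$ stabilises to $1$ on larger and larger balls and $\Delta u \succeq 0$), I expect $\langle \Delta u, \phi_n^2 \rangle$ to converge to something $\succeq \Delta u(x_0) m(x_0) \succ 0$ — this needs a monotonicity/positivity argument since the terms $\Delta u(x)\phi_n(x)^2 m(x)$ are nonnegative. On the right, I estimate $|Q(u,\phi_n^2)|$ using $\nabla_{xy}(\phi_n^2) = (\phi_n(x)+\phi_n(y))\nabla_{xy}\phi_n$ and the Cauchy--Schwarz-type bound to get $|Q(u,\phi_n^2)|^2 \preceq (\text{const})\cdot Q_u\text{-type energy} \cdot Q(\phi_n) \to 0$, after controlling $u$ and $\phi_n+\phi_n'$ locally. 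The contradiction $0 \prec \text{something} = 0$ finishes (a). The delicate point — and the main obstacle — is making the left-hand limit genuinely bounded away from $0$: one cannot invoke dominated convergence in a non-Archimedean field, so I must exploit that $\Delta u \succeq 0$ makes all summands have the same sign, so the partial sums are monotone and the limit dominates any fixed partial sum.

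\textbf{Part (b).} Here $u \succ 0$ is superharmonic and non-constant but possibly harmonic, and $\K$ is real-closed so square roots of positive elements exist. The standard real-variable trick is to replace $u$ by $\sqrt{u}$ (or by $u^\alpha$ with $0<\alpha<1$), which turns a harmonic function into a \emph{strictly} superharmonic one by a convexity/Jensen-type computation; concretely, using the first identity of Lemma~\ref{cor::gst1} or a direct expansion one checks $\Delta(\sqrt u)(x) \succeq \tfrac{1}{2\sqrt{u(x)}}\Delta u(x) + (\text{a nonnegative ``gradient-squared'' term})$, and the gradient term is $\succ 0$ at some vertex precisely because $u$ is non-constant (so $u$ differs along some edge). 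Thus $\sqrt u$ is a positive superharmonic function that is not harmonic, and (a) applies. The obstacle here is purely the algebraic inequality showing $\sqrt{\,\cdot\,}$ is strictly superharmonic in the ordered-field setting: I need the elementary bound $\sqrt{s}-\sqrt{t} \succeq \tfrac{1}{2\sqrt{s}}(s-t)$ together with strict positivity of the discrete $p$-energy term, all of which go through over any real-closed ordered field since they reduce to polynomial identities and the sign conventions of $\succ$.
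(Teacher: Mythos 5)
Part (b) of your proposal is correct and coincides with the paper's argument: the expansion of $\Delta u^{1/2}$ (your concavity inequality $\sqrt{s}-\sqrt{t}\succeq\tfrac{1}{2\sqrt{s}}(s-t)$ is its pointwise form) shows that $u^{1/2}$ is positive, superharmonic, and non-harmonic at any vertex where $u$ differs from a neighbour — such a vertex exists by connectedness since $u$ is non-constant — so part (a) applies to $u^{1/2}$.

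Part (a) has a genuine gap at the key estimate, and you have also misplaced the ``delicate point''. The left-hand side $\langle\Delta u,\phi_n^2\rangle$ is unproblematic: every summand $\Delta u(x)\phi_n(x)^2m(x)$ is $\succeq 0$, so the (finite) sum dominates the single term $\Delta u(x_0)\phi_n(x_0)^2m(x_0)$, which is eventually $\succeq\tfrac12\Delta u(x_0)m(x_0)\succ0$; no limit interchange is needed. The real obstruction is on the right: your Cauchy--Schwarz bound for $Q(u,\phi_n^2)$ produces the factor $\tfrac12\sum_{x,y}b(x,y)(\nabla_{xy}u)^2(\phi_n(x)+\phi_n(y))^2$, a weighted squared gradient of $u$ summed over the growing supports of $\phi_n$. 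Since $u$ is not compactly supported, nothing bounds this uniformly in $n$ (a positive superharmonic function may well have divergent energy), so you cannot conclude $Q(u,\phi_n^2)\to0$; ``controlling $u$ locally'' does not help because the sum is not local. The repair is to test against $\phi_n^2/u$ rather than $\phi_n^2$ (note $u\succ0$ everywhere by the local Harnack inequality, Theorem~\ref{localHarnack}): then the cross term and the quadratic term combine into the exact identity of the ground state transform, Theorem~\ref{thm::gst},
\[
Q(\phi)-\langle\tfrac{\Delta u}{u}\phi,\phi\rangle=Q_u\bigl(\tfrac{\phi}{u}\bigr)\succeq0 ,
\]
which says that $\omega=m\,\Delta u/u$ is a (non-trivial, since $u$ is not harmonic) Hardy weight. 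Evaluating at $\phi=\phi_n$ gives $\tfrac{\Delta u(x_0)}{u(x_0)}\phi_n(x_0)^2m(x_0)\preceq Q(\phi_n)\to0$, the desired contradiction; equivalently one invokes Theorem~\ref{Hardy}, which is exactly how the paper concludes. So your overall strategy — testing superharmonicity against a null sequence — is the right one, but the test function must be $\phi_n^2/u$, not $\phi_n^2$.
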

\begin{proof} 
(a) By the local Harnack inequality, Theorem~\ref{localHarnack}, any 
non-constant positive superharmonic function $ u $ is strictly positive.
Therefore, by the ground state transform, Theorem \ref{thm::gst}, $ \omega =m\cdot\Delta u /u  $
is a Hardy weight, as $Q_u\succeq 0$. Thus, the graph does not have null capacity by Theorem~\ref{Hardy}.\\
(b) In a real closed field one can take square roots and therefore can consider the function $ u^{1/2} $. A direct calculation, cf.~\cite[Lemma~2.2.]{KellerPinchoverPogorzelski_optimal}, yields
\begin{align*}
	\Delta u^{1/2}(x) = \frac{1}{2 u^{1/2}(x)} \Delta u(x) + \frac{1}{2 u^{1/2}(x)m(x)} \sum_{y\sim x}(\nabla_{xy} u^{1/2})^{2}b(x,y).
\end{align*}
Hence $ u^{1/2} $ is non-harmonic whenever $ u $ is  non-constant and the same argument as above while replacing $ u  $ with $ u^{1/2} $ yields the result.
\end{proof}

\begin{theorem}
If  the graph has positive capacity, then there exists a strictly positive non-constant superharmonic function.
\end{theorem}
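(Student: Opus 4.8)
The plan is to exhibit the Green function itself as the desired function. Since the graph has positive capacity, Theorem~\ref{Thm::typeDelta} provides the Green function $G\colon V\times V\to\K$; I would fix an arbitrary vertex $a\in V$ and set $u:=G(\cdot,a)$. The claim is that $u$ is a strictly positive, non-constant superharmonic function, which then proves the theorem.

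For superharmonicity, recall from Theorem~\ref{Thm::typeDelta} that $\Delta u=\Delta G(\cdot,a)=1_a\succeq 0$, so $u$ is superharmonic; moreover $\Delta u(a)=1\neq 0$, so $u$ is not harmonic and in particular not constant, since every constant function is harmonic. It remains to upgrade positivity to strict positivity. Here I would use the monotone approximation underlying $G$: by Lemma~\ref{rem::modDprFin} the finite Green functions satisfy $G_n(\cdot,a)=\Delta_{K_n}^{-1}1_a=\widetilde v_n$, and by domain monotonicity (Lemma~\ref{lem::monofSol}) the sequence $(\widetilde v_n(x))_n$ is non-decreasing and eventually strictly positive for each $x\in V$. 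Consequently, for every $x\in V$ there is $n_0$ with $u(x)=\lim_{n\to\infty}\widetilde v_n(x)\succeq\widetilde v_{n_0}(x)\succ 0$, so $u\succ 0$ on all of $V$.

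This is essentially all there is to it: the statement is a direct harvest of Theorem~\ref{Thm::typeDelta} together with the domain monotonicity from Lemma~\ref{lem::monofSol}. The only point that deserves attention — and hence the modest ``main obstacle'' — is that Theorem~\ref{Thm::typeDelta} a priori asserts positivity of $G$ only in the weak (non-negative) sense, as a limit of positive finite quantities, so one must separately argue that $u$ does not vanish anywhere. Besides the monotonicity argument above, an alternative is to invoke the local Harnack inequality (Theorem~\ref{localHarnack}): $u$ is a non-negative superharmonic function which is not identically zero because $\Delta u(a)=1$, hence $u\succ 0$ on every finite connected subset and therefore on $V$. Either route closes the proof.
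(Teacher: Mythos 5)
Your proposal is correct and follows essentially the same route as the paper: take $u=G(\cdot,a)$, note $\Delta u=1_a\succeq 0$ (so $u$ is superharmonic and non-constant), and upgrade non-negativity to strict positivity via the local Harnack inequality — which is exactly the paper's argument, your domain-monotonicity variant being an equally valid alternative for that last step.
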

\begin{proof} 
For positive capacity graphs 
 the Green function $G(\cdot, y)$ from Theorem~\ref{Thm::typeDelta} is a positive superharmonic function. By the Harnack inequality, Theorem~\ref{localHarnack} it is also strictly positive.
\end{proof}

In the case of divergent capacity graphs the question of existence of positive superharmonic function remains open. In the following, we consider some examples and present a sufficient condition on existence of positive superharmonic functions on divergent capacity graphs.
\begin{example}[Existence and non-uniqueness]\label{ex6}
In Example \ref{ex3}, i.e.~the path graph over the Levi-Civita field $\R$ with $b(k,k+1)=1$ and arbitrary $m:\Bbb N_0\to\Bbb K^+$, we can easily see that the function $$ u(k)=1-k\epsilon $$ is positive and superharmonic. Specifically,
$
\Delta u(0)=\epsilon/m(0)
$
and for any $k\in \Bbb N$ we have
$
\Delta u(k)=0.
$
Therefore, $u$ is not just superharmonic, but also a solution of the Dirichlet problem on the infinite graph $ V $ with $ u(0)=1 $.
The solution is obviously not unique, since one can take any arbitrary infinitesimal $\tau \in \R$ instead of $\epsilon$. 
\end{example}

Next, we present a theorem which gives a sufficient condition for having a positive superharmonic function on a graph.
Recall the notation of $ S_{n} $ and $ b_{\pm} $ which were defined with respect to a root vertex $ o $ from Section~\ref{section:symmetry}. The following theorem is a generalization of Example \ref{ex6}.

\begin{theorem}\label{Thm:condWTPos} Let $o\in V$  be given. Suppose that there exists $ c \in \mathbb{K}^{+} $  such that 
		\begin{equation*}\label{condWTPos}
		\dfrac{b_-(x)}{b_+(x)}\preceq c
	\end{equation*}
	for all $x \in V$ and  $c^n \prec \tau^{-1}$ for all $n\in \mathbb N$ and some infinitesimal $ \tau\in \mathbb{K}^{+} $.
\begin{itemize}
		\item[(a)] If $c\succ 1$,
 then the following function is positive and superharmonic $$ u(x)=1-{c^{|x|}}\tau,\qquad x\in V .$$
		
		\item[(b)] If $ c\preceq 1 $, then the following function is positive and superharmonic 
		$$ v(x)=1-|x|\tau \qquad x\in V .$$
	\end{itemize}

In particular, in either case there exists a positive superharmonic function on $V$.
\end{theorem}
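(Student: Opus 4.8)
The plan is to verify directly that the proposed functions $u$ and $v$ are positive and superharmonic, splitting into the two cases $c \succ 1$ and $c \preceq 1$ exactly as in the statement. In both cases positivity is the easy part: since $c^n \prec \tau^{-1}$ for all $n$, we have $c^{|x|}\tau \prec 1$ for every $x \in V$ (in case (a)), so $u(x) = 1 - c^{|x|}\tau \succ 0$; in case (b) we have $|x|\tau \preceq c^{|x|}\tau \prec 1$ using $|x| \preceq c^{|x|}$ (which holds since $c \preceq 1$ would force $|x| \preceq 1$... so more carefully: in case (b) one uses $|x| \le n$ as an integer and $n \prec \tau^{-1}$ since $\tau$ is infinitesimal, hence $|x|\tau \prec 1$ and $v(x) \succ 0$). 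So the real content is checking $\Delta u \succeq 0$ and $\Delta v \succeq 0$.

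For the superharmonicity computation I would exploit the layered structure around $o$. For a function of the form $f(x) = g(|x|)$, the Laplacian at $x$ collapses to
\[
\Delta f(x) = \frac{1}{m(x)}\Big( b_+(x)\big(g(|x|) - g(|x|+1)\big) + b_-(x)\big(g(|x|) - g(|x|-1)\big)\Big),
\]
because all neighbors of $x$ lie on the spheres $S_{|x|-1}$ and $S_{|x|+1}$ (edges within a sphere contribute zero since $g$ is constant there, and one should note the definition of $b_\pm$ sums the weights to each adjacent sphere). For case (a), $g(k) = 1 - c^k\tau$, so $g(k) - g(k+1) = (c^{k+1} - c^k)\tau = c^k(c-1)\tau$ and $g(k) - g(k-1) = (c^{k-1} - c^k)\tau = -c^{k-1}(c-1)\tau$. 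Plugging in,
\[
\Delta u(x) = \frac{\tau c^{|x|-1}(c-1)}{m(x)}\Big( c\, b_+(x) - b_-(x)\Big),
\]
and since $c \succ 1$ and $c\,b_+(x) \succeq b_-(x)$ by the hypothesis $b_-(x)/b_+(x) \preceq c$ (multiplying through by $b_+(x) \succ 0$), every factor is $\succeq 0$, giving $\Delta u(x) \succeq 0$. One should treat the root $x = o$ separately (there $b_-(o) = 0$, so the inequality is immediate, and $g(0) - g(-1)$ does not occur). For case (b), $g(k) = 1 - k\tau$, so $g(k) - g(k+1) = \tau$ and $g(k) - g(k-1) = -\tau$, yielding $\Delta v(x) = \frac{\tau}{m(x)}(b_+(x) - b_-(x)) \succeq 0$ exactly because $c \preceq 1$ forces $b_-(x) \preceq c\, b_+(x) \preceq b_+(x)$; again $x = o$ is handled directly.

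The main obstacle — really the only subtle point — is making sure the hypothesis $c^n \prec \tau^{-1}$ is used correctly to guarantee that $u$ and $v$ are \emph{genuinely positive everywhere on the infinite vertex set}, not merely nonnegative or positive on finite pieces: one needs that for every vertex $x$, the integer $|x|$ (which can be arbitrarily large) still satisfies $c^{|x|}\tau \prec 1$, and this is exactly what $c^n \prec \tau^{-1}$ for all $n \in \mathbb{N}$ provides. A secondary point worth a sentence is that local finiteness ensures the sum defining $\Delta f(x)$ is finite, so the collapse to the two-term formula above is legitimate. Once these are in place, the ``in particular'' claim is immediate: in case (a) the function $u$ works, in case (b) the function $v$ works, and these two cases exhaust the possibilities for $c$ (we may always enlarge $c$ so that either $c \succ 1$ or, if the ratios are all $\preceq 1$, keep $c \preceq 1$), so a positive superharmonic function exists on $V$ under the stated hypotheses.
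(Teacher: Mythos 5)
Your proof is correct and follows essentially the same route as the paper: both verify positivity directly from $c^{|x|}\prec\tau^{-1}$ (resp.\ $|x|\tau\prec 1$ since $\tau$ is infinitesimal) and establish superharmonicity by collapsing the Laplacian of a radial function to the two-term expression in $b_{\pm}(x)$, after which $\Delta u\succeq 0$ and $\Delta v\succeq 0$ reduce exactly to the hypothesis $b_-(x)/b_+(x)\preceq c$. The only cosmetic blemish is the false start $|x|\preceq c^{|x|}$ in the positivity argument for case (b), which you immediately correct.
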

\begin{proof}
(a)	 Since $ c^{-n}\succ \tau $, we have $ u\succ 0 $. As $ c\succ 1 $, we have that $ u $ is decaying in $|x|$ and $ \Delta u(o)\succ0 $. Moreover,
	\begin{align*}
		\frac{m(x)}{\tau b_{+}(x)}	\Delta u (x)=(c^{|x|+1}-c^{|x|}) + \frac{b_{-}(x)}{b_{+}(x)} (c^{|x|-1}-c^{|x|}) \succeq 0.
	\end{align*}
(b) Clearly,  $ v\succ 0 $  and $ v$ is decaying which gives $ \Delta v(o)\succeq 0 $.
Moreover,
	\begin{align*}
	\frac{m(x)}{\tau b_{+}(x)}	\Delta v(x)=1 - \frac{b_{-}(x)}{b_{+}(x)}  \succeq 0.
\end{align*}
This shows the existence of positive superharmonic functions in either case.
\end{proof}

\begin{remark}
The conditions of Theorem \ref{Thm:condWTPos} are satisfied if there exists a rational $c \in \mathbb Q$ with
$
		{b_-}/{b_+}\preceq c$.
\end{remark}
The following example shows that the condition $ {b_-}/{b_+}\preceq c $  in Theorem~\ref{Thm:condWTPos} is not necessary.

\begin{example}\label{ex7}
Let  a path graph with $m(k) = 1$, $b(2k, 2k+1)=1$ and $b(2k-1,2k)=\epsilon$ for $ k\in \mathbb{N} $ over the Levi-Civita field $\R$ be given. Then the function
\begin{align*}
&u(2k)=1-k\epsilon-k\epsilon^2,\qquad u(2k-1)=1-(k-1)\epsilon-k\epsilon^2
\end{align*}
is positive and  superharmonic. Indeed,
\begin{align*}
&\Delta u(0)=\epsilon^2& \mbox{and }&
&\Delta u(n)=0,\; n>0.
\end{align*}
One can easily see that this graph does not allow for a positive superharmonic function in the form $1-c_n\tau$, where $\tau$ is a fixed infinitesimal, while $c_n\in \Bbb Q$, i.e.~we need two different infinitesimals to compensate $b_{-}(1)/b_{+}(1)=\epsilon^{-1}$.
Furthermore, if we compare this example with the null capacity graph of Example~\ref{ex1}, we see that $\max_{S_{n}} b_{-}/b_{+}=\epsilon^{-1}$ for both graphs. Therefore, the necessary condition on having a positive superharmonic function can not be formulated just in terms of an upper bound for $b_{-}/b_{+}$.
\end{example}

\section{The transition operator}\label{sec:trans}
In this section we study the transition operator $ P=I-\Delta $ arising from a graph, which is a well known analytical tool in the  study of random walks. Specifically, throughout this section we choose the function $ m : V \to \K^+$ to be given by the vertex degree $$  m(x)=b(x)=\sum_{z\in V}b(x,z) ,\qquad x\in V .$$
 While this section can be seen as a first step towards a probabilistic interpretation, we refrain from  doing so at this point. The reason is that such an interpretation is somewhat more involved in the non-Archimedean setting, as already discussed in Remark~\ref{rem:prob}. Here we rather study the operator theoretic properties of $ P $. Specifically, we show how this operator can be used to obtain an inverse of the Laplacian under certain circumstances. For a background on the probability theory of random walks on real graphs we refer the reader to \cite{Woess}.

\subsection{Basic properties}
For a graph $ b $ over $V$, the  \emph{transition  probabilities} are defined by
\begin{equation*}
p(x,y)=\dfrac{b(x,y)}{b(x)},\qquad x,y \in V.
\end{equation*}
Then, we have
\begin{align*}
	\sum_{y\in V}p(x,y)=1, \qquad x \in V.
\end{align*}
These transition probabilities give rise to the transition operator $P : C(V) \to C(V)$ acting on functions $ f:V\to \mathbb{K} $ as
\begin{align*}
Pf(x)	=\sum_{y\in V}p(x,y)f(y),\qquad x\in V.
\end{align*}
With the choice of the function $ m(x)=b(x) $, $ x\in V $, we have $$  \Delta =I-P.  $$

In analogy with real valued probability we can define powers $P^n $, $n\in \mathbb N_0$, of $ P $ where $P^0=I$ by definition. This is possible as by local finiteness of the graph, each matrix element of $ P^{n} $ is a finite sum of products of finitely many  matrix elements of $ P $, i.e.,
$$
P^n(x,y)=P^{n}1_{y}(x)= \sum_{\substack{\text{paths $(x_i)$ of length $n$} \\ \text{from $x = x_0$ to $y=x_n$}}}
p(x,x_1)p(x_1,x_2)\dots p(x_{n-1},y), \qquad x,y \in V.
$$ 
Let us further denote
$$
\Pi^n(x,y)=\max_{\substack{\text{paths $(x_i)$ of length $n$} \\ \text{from $x = x_0$ to $y=x_n$}}} p(x,x_1)p(x_1,x_2)\dots p(x_{n-1},y), \qquad x,y \in V.
$$  For a finite subset $ K\subseteq V $, we denote by $ P_{K} $ the restriction of $ P $ to $ C_{c}(K) $ or $ C(K) $, respectively. We also denote the corresponding restriction to paths within $ K $ by $ \Pi^{n}_{K} $.

The next lemma shows that the behavior of the matrix elements $P^n(x,y)$ of $ P^{n} $ for non-Archimedean fields is completely determined by the most probable paths. 
\begin{lemma}[Basic properties]\label{Lemma::P(x,y)NoV}
	\begin{itemize}
		\item [(a)] For $ x,y\in V $, we have $ \lim_{n\to\infty} P^n(x,y)=0$  if and only if   $\lim _{n\to\infty} \Pi^n(x,y)=0 .$
		\item [(b)] If $\lim_{n\to\infty} P^n(x,y) = 0$ for some $x,y\in V$, then $\lim_{n\to\infty} P^n(x,y)=0$ for all $x,y\in V$. 
		\item [(c)]  For $x,y \in V$, we have $\lim_{n\to\infty} P^n(x,y) = 0$ if and only if $ \sum_{n=0}^{\infty} P^{n}(x,y)$ exists.
	\end{itemize} 
Corresponding statements hold if $ P^{n} $ is replaced by $ P^{n}_{K} $ for some finite $ K\subseteq V $.
\end{lemma}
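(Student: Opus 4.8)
The plan is to treat the three parts almost independently, using only the combinatorial formulas already recorded for $P^{n}(x,y)$ and $\Pi^{n}(x,y)$ together with two elementary facts about the order topology of $\K$. Part~(c) is an immediate instance of ``a student's dream'': since $P^{n}(x,y)\succeq 0$ for every $n$, Proposition~\ref{prop:studentsdream} applied to the sequence $a_{n}=P^{n}(x,y)$ says precisely that $\sum_{n\ge 0}P^{n}(x,y)$ converges if and only if $P^{n}(x,y)\to 0$. For part~(a), one implication is free: $\Pi^{n}(x,y)$ is the maximum of the finitely many nonnegative numbers $p(x,x_{1})\cdots p(x_{n-1},y)$ whose sum is $P^{n}(x,y)$, so $0\preceq\Pi^{n}(x,y)\preceq P^{n}(x,y)$, and hence $P^{n}(x,y)\to 0$ forces $\Pi^{n}(x,y)\to 0$ by squeezing.

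The converse half of~(a) is where the non-Archimedean setting genuinely enters, and I expect this to be the main obstacle. I would use the reverse estimate $P^{n}(x,y)\preceq N_{n}(x,y)\,\Pi^{n}(x,y)$, where $N_{n}(x,y)\in\mathbb N_{0}$ is the number of paths of length $n$ from $x$ to $y$, which is finite by local finiteness. Although $N_{n}(x,y)$ may grow with $n$, it cannot destroy convergence to zero in a non-Archimedean field: if $a_{n}\succ 0$ with $a_{n}\to 0$ and $c_{n}\in\mathbb N_{0}$, then $c_{n}a_{n}\to 0$. Indeed, if some $\delta\succ 0$ satisfied $c_{n}a_{n}\succ\delta$ for infinitely many $n$ (so $c_{n}\ge 1$ for those $n$), then fixing an infinitely large element $\mathcal N\in\K$ we would have $c_{n}\prec\mathcal N$ for all $n$ — every natural number lies below an infinitely large element — whence $a_{n}\succ\delta/c_{n}\succ\delta/\mathcal N\succ 0$ for infinitely many $n$, contradicting $a_{n}\to 0$. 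Taking $a_{n}=\Pi^{n}(x,y)$ and $c_{n}=N_{n}(x,y)$ and squeezing then gives $P^{n}(x,y)\to 0$. This is precisely the step that has no analogue over $\mathbb R$.

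For part~(b) I would insert a fixed detour. Given vertices $x,y$ and $x',y'$, choose a path from $x$ to $x'$ of length $d(x,x')$ and one from $y'$ to $y$ of length $d(y,y')$ — these exist by connectedness — with respective products of transition probabilities $w_{1}\succ 0$ and $w_{2}\succ 0$. Concatenating them around an arbitrary path of length $n$ from $x'$ to $y'$ yields a path of length $n+k$ from $x$ to $y$ with $k:=d(x,x')+d(y,y')$, and summing over all such middle paths gives
$$
P^{n+k}(x,y)\;\succeq\; w_{1}w_{2}\,P^{n}(x',y'),\qquad n\ge 0.
$$
Hence if $P^{m}(x_{0},y_{0})\to 0$ for one pair $x_{0},y_{0}$, then for any $x,y$ we obtain $0\preceq P^{n}(x,y)\preceq (w_{1}w_{2})^{-1}P^{n+k}(x_{0},y_{0})$, and since a tail of a null sequence is null, $P^{n}(x,y)\to 0$; this proves~(b) and, combined with~(a) and~(c), also the ``(all)'' form of the equivalences. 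Finally, the addendum for $P^{n}_{K}$ follows by rerunning all three arguments with every path constrained to $K$: part~(c) is again Proposition~\ref{prop:studentsdream}, part~(a) uses that there are only finitely many length-$n$ paths inside $K$ between two vertices, and part~(b) uses that the connecting detours can be chosen inside $K$, which is why $K$ is taken finite and connected.
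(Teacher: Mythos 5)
Your proof is correct and follows essentially the same route as the paper: the two-sided estimate $\Pi^n\preceq P^n\preceq (\text{path count})\,\Pi^n$ combined with the fact that natural numbers are dominated by any infinitely large element for (a), the insertion of fixed connecting paths for (b) (the paper phrases this for $\Pi^n$ rather than $P^n$, but the idea is identical), and ``a student's dream'' for (c). Your observation that the $K$-version of (b) needs the relevant vertices to be connected within $K$ is a reasonable extra precision that the paper leaves implicit.
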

\begin{proof}(a) Let {$\mathcal N$} be an infinitely large element of $\K$. Then since the number of paths from $x$ to $y$ of given length $n \in \mathbb N$ is a natural number,
we have
$$
\Pi^n(x,y)\preceq P^n (x,y)\preceq \mathcal N\Pi^n(x,y),
$$
and the statement follows.

(b)
Let $x_1,y_1,x_2,y_2\in V$ and  $d_x=\dist(x_1,x_2)$ and $d_y=\dist(y_1,y_2)$. For $n$ large enough, we have
\begin{equation*}
\Pi^n(x_1,y_1)\succeq \Pi^{d_x}(x_1,x_2)\Pi^{n-d_x-d_y}(x_2,y_2)\Pi^{d_y}(y_2,y_1).
\end{equation*} 
Since $\Pi^{d_x}(x_1,x_2)$ and $\Pi^{d_y}(y_2,y_1)$ does not depend on $n$, 
 the statement follows.
 
 (c) This follows from the ``students dream'' Proposition~\ref{prop:studentsdream}.
\end{proof}

\subsection{Inverting the Laplacian}
Let $ K\subseteq V $ be finite and connected. Recall that the operator $ \Delta^{-1} _{K}$ is defined via the solution of the renormalized Dirichlet problem \eqref{dirprmod} on $ K $, confer Lemma~\ref{rem::modDprFin}.  

\begin{theorem}\label{Lemma::convPn}
If $\lim_{n\to \infty} P^n(x,y) = 0$ for some (all) $x,y\in V$, then for all $ \phi\in C_c(V) $
$$
\Delta\sum_{n=0}^\infty P^n\phi=\sum_{n=0}^\infty P^n\Delta\phi= \phi.
$$
A corresponding statement holds for the restrictions $ P_{K} $ and $ \Delta_{K} $ to finite subsets $ K\subseteq V $. In particular, if  $\lim_{n \to \infty} P^n_{K}(x,y) = 0$ for some (all) $x,y\in K$, then for all $ \phi\in C_{c}(K) $
\begin{align*}
	\Delta^{-1}_{K}\phi=\sum_{n=0}^\infty P^n_{K}\phi.
\end{align*}
\end{theorem}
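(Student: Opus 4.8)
The plan is to exploit the algebraic identity $\Delta = I - P$, which holds thanks to our choice $m(x) = b(x)$, together with a telescoping argument. Since $P$ commutes with its powers, one has $\Delta P^{n} = P^{n}\Delta = P^{n} - P^{n+1}$, so that for every $\phi \in C_c(V)$ and every $N \in \mathbb{N}_0$
\[
\sum_{n=0}^{N} P^{n}\Delta\phi \;=\; \sum_{n=0}^{N}\bigl(P^{n}\phi - P^{n+1}\phi\bigr) \;=\; \phi - P^{N+1}\phi .
\]
First I would fix $\phi \in C_c(V)$, say with support in the finite set $F$, and a vertex $x \in V$. Then $P^{n}\phi(x) = \sum_{y \in F}\phi(y) P^{n}(x,y)$ is a finite $\K$-linear combination; by Lemma~\ref{Lemma::P(x,y)NoV}(b) the hypothesis $\lim_n P^n(x,y) = 0$ holds for all pairs, and by part (c) each series $\sum_{n} P^{n}(x,y)$ converges in $\K$. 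Hence $g := \sum_{n=0}^{\infty} P^{n}\phi$ is a well-defined function $V \to \K$, and $P^{N+1}\phi(x) \to 0$ as $N \to \infty$. Passing to the limit in the display above yields $\sum_{n=0}^{\infty} P^{n}\Delta\phi = \phi$.

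It remains to check $\Delta g = \phi$. Here local finiteness is the point: for fixed $x$, $\Delta g(x) = \frac{1}{m(x)}\sum_{y \in V} b(x,y)\bigl(g(x) - g(y)\bigr)$ is a finite sum over the neighbours of $x$. Since a finite $\K$-linear combination of convergent series converges to the corresponding combination of their limits (continuity of addition and scalar multiplication in the order topology), one may interchange this finite sum with the infinite series defining $g$, obtaining
\[
\Delta g(x) \;=\; \sum_{n=0}^{\infty}\Delta(P^{n}\phi)(x) \;=\; \sum_{n=0}^{\infty} P^{n}\Delta\phi(x) \;=\; \phi(x),
\]
using $\Delta P^{n} = P^{n}\Delta$ and the previous step. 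This establishes both equalities on $C_c(V)$.

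For the restrictions to a finite connected $K \subseteq V$, note that for $\phi \in C_c(K)$ one has $\Delta_{K} = I - P_{K}$ on the finite-dimensional space $C(K)$ (the row sums of $P_K$ may be strictly less than $1$ because of edges leaving $K$). By the ``corresponding statements'' in Lemma~\ref{Lemma::P(x,y)NoV}, $P_{K}^{n}(x,y) \to 0$ for all $x,y \in K$, so $\sum_{n} P_{K}^{n}\phi$ converges in $C(K)$ coordinatewise, each coordinate again being a finite sum. Telescoping as above gives $\bigl(\sum_{n=0}^{N} P_{K}^{n}\bigr)\Delta_{K}\phi = \phi - P_{K}^{N+1}\phi$, and since multiplication by the fixed operator $\Delta_{K}$ is continuous on the finite-dimensional space $C(K)$, passing to the limit yields $\Delta_{K}\sum_{n=0}^{\infty} P_{K}^{n}\phi = \sum_{n=0}^{\infty} P_{K}^{n}\Delta_{K}\phi = \phi$. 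As $\Delta_{K}$ is invertible by Lemma~\ref{rem::modDprFin}, applying $\Delta_{K}^{-1}$ gives $\Delta_{K}^{-1}\phi = \sum_{n=0}^{\infty} P_{K}^{n}\phi$. The only point requiring care is that none of the usual analytic tools (dominated or uniform convergence) are available over a non-Archimedean field; what makes everything work is that, by local finiteness and compact support, every sum actually manipulated is finite, so all interchanges reduce to linearity of limits in $\K$ together with Lemma~\ref{Lemma::P(x,y)NoV}.
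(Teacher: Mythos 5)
Your proposal is correct and takes essentially the same route as the paper: the identity $\Delta = I-P$ together with the telescoping computation $\bigl(\sum_{n=0}^{N}P^{n}\bigr)(I-P) = I - P^{N+1}$, with convergence of $\sum_{n}P^{n}(x,y)$ supplied by Lemma~\ref{Lemma::P(x,y)NoV} and uniqueness of the inverse $\Delta_K^{-1}$ for the final claim. You spell out the justifications (pointwise convergence via compact support, interchanging $\Delta$ with the series via local finiteness) that the paper's terser proof leaves implicit, but the argument is the same.
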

\begin{proof}Since $ \Delta =I-P $, the Laplacian and the partial sums clearly commute.
Moreover, we can explicitly compute for the matrix elements 
\begin{align*}&\left((I-P) \sum_{n=0}^N P^n\right)(x,y)=\sum_{n=0}^N P^n (x,y)-\sum_{n=0}^N P^{n+1}(x,y)=I(x,y)-P^{N+1}(x,y) \to I(x,y)
\end{align*}
as $N\to \infty$. Thus, we can conclude the statements for finitely supported function. The argument for the restrictions is the same verbatim. Furthermore, since inverses are unique the equality follows.
\end{proof}

\begin{remark}
Over a non-Archimedean field the equality
$
\Delta^{-1}_{K}=\sum_{n=0}^\infty P_K^n 
$
is \emph{not} necessarily true in general, since the right hand side may not  converge. For example, consider any graph with real/rational weights over a non-Archimedean ordered field.
\end{remark}

\begin{lemma}\label{lemma::finalA} Let $(K_{j})$ be an exhaustion. Then for any $j,n \in \mathbb{N}$ and for any $x,y\in V$,
$$
P^n(x,y)\succeq P^n_{K_{j+1}}(x,y)\succeq P^n_{K_j}(x,y).
$$ 
Moreover, for any $x,y\in V$ and $n\in \mathbb N$
there exists $j_0 \in \mathbb N$ such that for any $j\ge j_0$
$$
P^n_{K_j}(x,y)=P^n(x,y).
$$
\end{lemma}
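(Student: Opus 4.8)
The plan is to prove the two assertions about the path-counting matrix elements $P^n(x,y)$ and their restrictions $P^n_{K_j}(x,y)$ directly from the combinatorial formula
$$
P^n(x,y)=\sum_{\substack{\text{paths }(x_i)\text{ of length }n\\ \text{from }x=x_0\text{ to }y=x_n}} p(x,x_1)\cdots p(x_{n-1},y),
$$
together with the analogous formula in which the sum is restricted to paths that stay inside $K_j$. First I would observe that every path of length $n$ from $x$ to $y$ that lies entirely in $K_j$ also lies in $K_{j+1}$ (since $K_j\subseteq K_{j+1}$) and is in particular a path in $V$. Hence the index set defining $P^n_{K_j}(x,y)$ is a subset of the one defining $P^n_{K_{j+1}}(x,y)$, which in turn is a subset of the one defining $P^n(x,y)$. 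Since every summand $p(x,x_1)\cdots p(x_{n-1},y)$ is a nonnegative element of $\K$ (the transition probabilities $p(u,v)=b(u,v)/b(u)$ lie in $\K^+\cup\{0\}$, being quotients of nonnegative quantities), removing summands from a finite sum of nonnegative terms can only decrease it in the order of $\K$. This yields the chain of inequalities $P^n(x,y)\succeq P^n_{K_{j+1}}(x,y)\succeq P^n_{K_j}(x,y)$.

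For the second assertion I would argue that, for fixed $x,y$ and fixed $n$, there are only finitely many paths of length $n$ from $x$ to $y$ in $V$ at all — this is precisely where local finiteness is used: from $x_0=x$ there are finitely many choices of neighbour $x_1$, from $x_1$ finitely many choices of $x_2$, and so on, so the total number of paths of length $n$ from $x$ to $y$ is finite. Each such path visits only finitely many vertices, so the union $W$ of the vertex sets of all these (finitely many) paths is a finite subset of $V$. Since $(K_j)$ is an exhaustion, $V=\bigcup_j K_j$ and the sets are increasing, so there exists $j_0$ with $W\subseteq K_{j_0}$, hence $W\subseteq K_j$ for all $j\ge j_0$. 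For such $j$, every path of length $n$ from $x$ to $y$ in $V$ already lies inside $K_j$, so the index sets defining $P^n_{K_j}(x,y)$ and $P^n(x,y)$ coincide, and therefore $P^n_{K_j}(x,y)=P^n(x,y)$.

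I do not expect a serious obstacle here; the argument is essentially bookkeeping with finite sums and the exhaustion property. The one point requiring a little care is making sure $j_0$ can be chosen uniformly over all the paths simultaneously (rather than one $j_0$ per path), which is why I would first collect all relevant vertices into the single finite set $W$ and only then invoke the exhaustion. If one wanted to be pedantic, one should also note that $y$ itself (and $x$) lie in every $K_j$ for $j$ large, but this is subsumed in $W\subseteq K_{j_0}$ since $x,y\in W$. No positivity subtleties arise beyond the trivial observation that the empty sum (when no path of length $n$ from $x$ to $y$ exists, whether in $K_j$ or in $V$) equals $0$, in which case all three quantities are $0$ and both statements hold trivially.
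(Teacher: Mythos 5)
Your proof is correct and follows essentially the same route as the paper's (very terse) argument: the monotonicity is immediate from the nesting of the path sets and the nonnegativity of the summands, and for the eventual equality the paper simply chooses $j_0$ with $B_n(x),B_n(y)\subset K_{j_0}$, which plays exactly the role of your finite set $W$ of vertices visited by length-$n$ paths. Your care in choosing $j_0$ uniformly over all paths is the right (and only) point of substance, and it is handled correctly.
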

\begin{proof}
The first statement is obvious. For the  equality
take ${j_0}$ such that $B_n(x),B_{n}(y)\subset K_{j_0}$.
\end{proof}

Recall that in the positive capacity case, we have the existence of a Green's function $ G $ by Theorem~\ref{Thm::typeDelta}.

\begin{theorem}\label{Thm::convPn}
	If $\lim_{n \to \infty} P^n(x,y) = 0$ for some  $x,y\in V$, then the graph has positive capacity.  In this case, we have  for any exhaustion $ (K_{j}) $ and all $\phi\in C_c(V)$
$$
\sum_{n=0}^{\infty} P^{n}\phi=\lim_{j\to\infty} \Delta^{-1}_{K_j}\phi.
$$
In particular, for all $ x,y\in V $
$$ \sum_{n=0}^{\infty} P^{n}(x,y) =G(x,y). $$

\end{theorem}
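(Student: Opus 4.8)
The plan is to reduce everything to the finite restrictions $P_{K_{j}}$ and then pass to the limit, using Theorem~\ref{Lemma::convPn} on each $K_{j}$.

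First I would record the immediate consequences of the hypothesis. By Lemma~\ref{Lemma::P(x,y)NoV}(b) we have $\lim_{n}P^{n}(x,y)=0$ for \emph{all} $x,y\in V$, and by part~(c) the series $S(x,y):=\sum_{n=0}^{\infty}P^{n}(x,y)$ converges in $\K$ for all $x,y$. The key observation is that the hypothesis then passes to every finite $K\subseteq V$: since $0\preceq P^{n}_{K}(x,y)\preceq P^{n}(x,y)$ by Lemma~\ref{lemma::finalA} and the right-hand side tends to $0$, a squeeze gives $\lim_{n}P^{n}_{K}(x,y)=0$. Hence Theorem~\ref{Lemma::convPn} applies to each $K_{j}$ and yields
$$
G_{j}(x,y)=\Delta_{K_{j}}^{-1}1_{y}(x)=\sum_{n=0}^{\infty}P^{n}_{K_{j}}(x,y),\qquad x,y\in V,\ j\in\mathbb{N}.
$$

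Next I would prove $\lim_{j\to\infty}G_{j}(x,y)=S(x,y)$. Both $G_{j}(x,y)$ and $S(x,y)$ are now sums of convergent series, so their difference equals $\sum_{n}\bigl(P^{n}(x,y)-P^{n}_{K_{j}}(x,y)\bigr)$, a series of nonnegative terms by Lemma~\ref{lemma::finalA}. Given an infinitesimal $\tau\succ0$, I pick $N$ with $\sum_{n>N}P^{n}(x,y)\preceq\tau$ (the tail of a convergent series tends to $0$), and then Lemma~\ref{lemma::finalA} provides $j_{0}$ with $P^{n}_{K_{j}}(x,y)=P^{n}(x,y)$ for all $n\le N$ and $j\ge j_{0}$; hence $0\preceq S(x,y)-G_{j}(x,y)\preceq\sum_{n>N}P^{n}(x,y)\preceq\tau$ for $j\ge j_{0}$. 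This uniform tail estimate is the one place where the non-Archimedean setting really bites — there is no monotone convergence theorem to invoke — and I expect it to be the main technical point. Applying it with $x=y=a$ gives $\lim_{j}G_{j}(a,a)=S(a,a)\succeq P^{0}(a,a)=1\succ0$, so $\lim_{j}G_{j}(a,a)\in\K$ and the graph has positive capacity by Theorem~\ref{Thm::typeDelta}(2).

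Finally, since the graph has positive capacity, Theorem~\ref{Thm::typeDelta} identifies the Green function as $G(x,y)=\lim_{j}G_{j}(x,y)$, which by the previous step equals $S(x,y)=\sum_{n}P^{n}(x,y)$; this is the ``in particular'' claim. For the operator identity I would use linearity: for $\phi\in C_{c}(V)$ write $\phi=\sum_{y}\phi(y)1_{y}$ (a finite sum), so that for all large $j$ one has $\Delta_{K_{j}}^{-1}\phi=\sum_{y}\phi(y)G_{j}(\cdot,y)$, and letting $j\to\infty$ gives, pointwise, $\sum_{y}\phi(y)G(\cdot,y)=\sum_{y}\phi(y)S(\cdot,y)=\sum_{n}P^{n}\phi$, the convergence of $\sum_{n}P^{n}\phi$ being guaranteed by Theorem~\ref{Lemma::convPn}. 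The remaining bookkeeping — interchanging the finite $y$-sum with the limit in $j$ and with the convergent $n$-series — is routine.
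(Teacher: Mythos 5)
Your proposal is correct and follows essentially the same route as the paper: deduce $P^n_{K_j}(x,y)\to 0$ from Lemma~\ref{lemma::finalA} by a squeeze, apply Theorem~\ref{Lemma::convPn} on each $K_j$, and control $\sum_{n}\bigl(P^n(x,y)-P^n_{K_j}(x,y)\bigr)$ by splitting off a tail $\sum_{n>N}P^n(x,y)\preceq\tau$ and using the eventual equality $P^n_{K_j}(x,y)=P^n(x,y)$ for $n\le N$. The only (harmless) additions are the explicit observation $S(a,a)\succeq 1$ and the spelled-out linearity step for general $\phi\in C_c(V)$, both of which the paper leaves implicit.
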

\begin{proof} Let $ x,y\in V $.
From Lemma~\ref{lemma::finalA}  it  follows, that if $P^n(x,y)\to 0$, then $P^n_{K}(x,y)\to 0$ for any finite $K\subset V$.  Theorem~\ref{Lemma::convPn}, therefore, yields
together with Lemma~\ref{lemma::finalA}
$$
\sum_{n=0}^\infty P^n(x,y)-\Delta^{-1}_{K_j}1_x(y)=\sum_{n=0}^\infty P^n(x,y)-\sum_{n=0}^\infty P^n_{K_j}(x,y)\succeq 0.
$$
Moreover, let $\tau\in \mathbb K^{+}$  and choose $ k $ such that
$
\sum_{n=k}^\infty P^n(x,y)\preceq\tau.
$
By Lemma~\ref{lemma::finalA} we can choose $ j_{0} $ such that $ P_{K_{j}}^{n}(x,y)= P^{n}(x,y) $ for $ n\leq k $ and  $j\ge j_{0}  $. 
Thus, 
$$
\sum_{n=0}^\infty P^n(x,y)-\Delta^{-1}_{K_{j}} 1_x(y)=\sum_{n=k}^\infty( P^n(x,y)- P^n_{K_{j}}(x,y))\preceq \sum_{n=k}^\infty P^n(x,y)\preceq \tau.
$$
Thus, the desired  convergence $G_{j}(x,y)=\Delta^{-1}_{K_j}1_y(x)\to \sum_{n=0}^\infty P^n(x,y)$ follows and
the graph has positive capacity by Theorem \ref{Thm::typeDelta} which also gives the ``in particular'' statement.
\end{proof}
\begin{theorem}\label{Thm:convFiniteTrans}
Let a positive capacity graph be given. If for some $x,y \in V$ and some exhaustion $(K_{j})$ we have
$
\lim_{n \to \infty} P_{K_j}^n(x,y) = 0
$ for  all $j$,
then $P^n(x,y)\to 0$ for all $x,y \in V$.
\end{theorem}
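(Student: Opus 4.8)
The plan is to deduce $\lim_{n\to\infty}P^n(x,y)=0$ from two facts: the convergence of the finite Green functions $G_j(x,y)\to G(x,y)$, which comes for free from positive capacity via Theorem~\ref{Thm::typeDelta}, and the hypothesis, which for \emph{each fixed} $j$ says that $P^n_{K_j}(x,y)\to 0$ as $n\to\infty$. By Lemma~\ref{Lemma::P(x,y)NoV}(b) it is enough to handle the single pair $(x,y)$ appearing in the hypothesis, and then the statement propagates to all pairs. This may be viewed as a partial converse to Theorem~\ref{Thm::convPn}.

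First I would collect the ingredients. Since the graph has positive capacity, $G_j(x,y):=\Delta_{K_j}^{-1}1_y(x)\to G(x,y)\in\K$ by Theorem~\ref{Thm::typeDelta}, and domain monotonicity (Lemma~\ref{lem::monofSol}) makes $(G_j(x,y))_j$ non-decreasing, so that $G_j(x,y)\preceq G(x,y)$ for all $j$. Moreover, for every $j$ with $x,y\in K_j$, the assumed $P^n_{K_j}(x,y)\to 0$ is exactly the hypothesis needed for Theorem~\ref{Lemma::convPn} applied to $K=K_j$, which yields $G_j(x,y)=\sum_{n=0}^{\infty}P^n_{K_j}(x,y)$.

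The core of the argument is the uniform-in-$n$ estimate
\[
0\preceq P^n(x,y)-P^n_{K_j}(x,y)\preceq G(x,y)-G_j(x,y),
\]
valid for every $j$ with $x,y\in K_j$ and every $n$. The left inequality is part of Lemma~\ref{lemma::finalA}. For the right one I would fix $n$, use Lemma~\ref{lemma::finalA} to choose $j'\ge j$ with $P^m_{K_{j'}}(x,y)=P^m(x,y)$ for all $m\le n$, and then compare partial sums: the termwise differences $P^m_{K_{j'}}(x,y)-P^m_{K_j}(x,y)$ are non-negative (again Lemma~\ref{lemma::finalA}), and the corresponding full series sum to $G_{j'}(x,y)$ and $G_j(x,y)$; hence the partial sums of the difference form a non-decreasing sequence with limit $G_{j'}(x,y)-G_j(x,y)\preceq G(x,y)-G_j(x,y)$ and are therefore bounded by it, so that $\sum_{m=0}^{n}\big(P^m(x,y)-P^m_{K_j}(x,y)\big)\preceq G(x,y)-G_j(x,y)$, and since each summand is non-negative the $n$-th one is bounded by the whole sum. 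Given this estimate, the conclusion is bookkeeping: for $\tau\succ 0$ pick $j$ with $x,y\in K_j$ and $G(x,y)-G_j(x,y)\preceq\tau/2$, so that $P^n(x,y)-P^n_{K_j}(x,y)\preceq\tau/2$ for \emph{all} $n$; then use $P^n_{K_j}(x,y)\to 0$ for that fixed $j$ to find $N$ with $P^n_{K_j}(x,y)\preceq\tau/2$ for $n\ge N$; adding gives $P^n(x,y)\preceq\tau$ for $n\ge N$, and Lemma~\ref{Lemma::P(x,y)NoV}(b) extends this to all pairs.

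I expect the genuine obstacle to be the very reason this somewhat indirect route is needed: over a non-Archimedean field a bounded monotone sequence need not converge, so one cannot simply observe that the partial sums $\sum_{n\le N}P^n(x,y)$, being bounded by $G(x,y)$, must converge. The uniform comparison with the truncated walk, whose defect is controlled by the Green-function defect $G(x,y)-G_j(x,y)$, is what replaces a monotone-convergence argument, and the only point requiring care is the order in which the quantities $\tau$, $j$, $n$ and the auxiliary index $j'$ are chosen in the estimate.
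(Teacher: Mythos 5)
Your proposal is correct and follows essentially the same route as the paper: both proofs rest on the comparison $P^n_{K_j}(x,y)\preceq P^n(x,y)$ from Lemma~\ref{lemma::finalA}, the identity $G_j(x,y)=\sum_{n}P^n_{K_j}(x,y)$ from Theorem~\ref{Lemma::convPn}, and the convergence $G_j(x,y)\to G(x,y)$ supplied by positive capacity via Theorem~\ref{Thm::typeDelta}. The only difference is the endgame: the paper sandwiches the partial sums $\sum_{n=0}^{k}P^n(x,y)$ to show the full series converges to $G(x,y)$ and then invokes Proposition~\ref{prop:studentsdream}, whereas you extract the uniform termwise bound $0\preceq P^n(x,y)-P^n_{K_j}(x,y)\preceq G(x,y)-G_j(x,y)$ and conclude directly by an $\varepsilon/2$-argument --- a valid and slightly more economical finish.
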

\begin{proof} If $
	P_{K_j}^n(x,y)\to 0,
	$ $ n\to\infty$, for some $ x,y \in V$, then this holds for all $ x,y\in V $ by Lemma \ref{Lemma::P(x,y)NoV}. So fix $ x,y\in V $. By Theorem~\ref{Thm::typeDelta} and the second part of Theorem~\ref{Lemma::convPn} we have 
\begin{equation*}\label{eq1}
G(x,y)=\lim_{j\to \infty}\Delta^{-1}_{K_j}1_x(y)= \lim_{j\to \infty}\sum_{n=0}^\infty P_{K_j}^n(x,y).
\end{equation*}
We now show that this is equal to $ \sum_{n=0}^\infty P^n(x,y) $. To this end observe that  for all $ k $ and $ j $ large enough by Lemma~\ref{lemma::finalA}
\begin{align*}
	G(x,y)- \sum_{n=0}^k P^n(x,y) = 	G(x,y)- \sum_{n=0}^kP_{K_j}^n(x,y) \succeq G(x,y)- \Delta^{-1}_{K_j}1_x(y)\succeq 0,
\end{align*}
where the last inequality follows from domain monotonicity of solutions $\widetilde v_{j} =\Delta^{-1}_{K_j}1_x $ of the renormalized Dirichlet problem 
\eqref{dirprmod}, Lemma~\ref{lem::monofSol}. 
On the other hand, by the equality in the beginning of the proof, for given $\tau \in \K^+$ we obtain the existence of $ j_{0} \in \mathbb N$  such that for $ j\ge j_{0} $
\begin{align*}
		G(x,y)\preceq  \sum_{n=0}^\infty P_{K_j}^n(x,y) +\tau.
\end{align*}
Moreover, by convergence of the series $ \sum_{n}P^{n}_{K_{j}} $ for every $ j $ there is $ k_{j} $ such that for all $ k\ge k_{j} $
\begin{align*}
	 \sum_{n=k+1}^\infty P_{K_j}^n(x,y)\preceq \tau.
\end{align*}
Thus, we obtain for all  $ k\ge k_{j_0} $
\begin{align*}
	2\tau \succeq   	G(x,y)- \sum_{n=0}^\infty P_{K_{j_0}}^n(x,y) + \sum_{n=k+1}^\infty P_{K_{j_0}}^n(x,y)\succeq G(x,y)- \sum_{n=0}^k P^n(x,y).
\end{align*}
  Hence, we infer, that $ \sum_{n=0}^\infty P^n(x,y) $ converges. Thus, the $ P^{n}(x,y) $ converge to zero by the ``student's dream'', Proposition~\ref{prop:studentsdream}.
\end{proof}

The following example shows that the condition of positive capacity  is necessary in Theorem~\ref{Thm:convFiniteTrans}, namely, a graph with $\lim_{n \to \infty} P_{K_j}^n(x,y) = 0$
for some exhaustion $(K_{j})$ can have divergent capacity. Then $P^n(x,y)$ does not converge to zero due to Theorem \ref{Thm::convPn}. Below in  Theorem \ref{Thm:convFiniteRec} we will show that it can be not of null capacity either.
\begin{example}[$P_{K_j}^n(x,y)\to 0$ but $ P^{n} (x,y)\not\to 0$]\label{ex5}
We consider the path graph on $ V=\mathbb{N}_{0} $, see Figure~\ref{Fig1}, over the Levi-Civita field $\R$ with
	$$ b(k,k+1)=\epsilon^\frac{1}{2^k}. $$
	Then, we can estimate $ b(k)=b(k,k+1) +b(k,k-1)=\epsilon^\frac{1}{2^{k}}(1+\epsilon^\frac{1}{2^{k}})$ by
	\begin{equation*}\label{ex5::est1}
		\epsilon^\frac{1}{2^{k}} \prec b(k) \prec 2 \epsilon^\frac{1}{2^{k}}
	\end{equation*}
 which readily gives
	\begin{equation*}\label{ex5::est2}
		\dfrac{1}{2} \prec P(k,k+1) \preceq 1 \qquad \mbox{ and }\qquad 	\dfrac{1}{2} \epsilon^\frac{1}{2^{k}}\prec P(k,k-1) \prec  \epsilon^\frac{1}{2^{k}}.
	\end{equation*}
	Then, for the finite approximation $L=\{0, 1, \dots, l\}$ of length $l \in \mathbb N$, we have 
$$
\Pi^{2n}_{L}(0,0)\preceq \epsilon^\frac{n}{2^l}\to 0,\qquad n\to \infty.
$$
Hence, $ P_L^{n}(0,0) \to 0$ for $n \to\infty$.

On the other hand, for the infinite graph
	$$
	\Pi^{2n}(0,0)\succeq \dfrac{1}{2^{2n}}\prod_{k=1}^{n}\epsilon^\frac{1}{2^{k}}=\dfrac{1}{2^{2n}}\epsilon^{\sum_{k=1}^{n}\frac{1}{2^k}}\succ \dfrac{1}{2^{2n}}\epsilon\succ \epsilon^2,
	$$
	so $P^n(0,0) \not\to 0$. 
	
	Moreover, the effective capacity of finite approximations is given by Theorem~\ref{thm::weaksphersym} as
	$$
	\P_L(0)=\left(\sum_{k=0}^{l}\dfrac{1}{b(k,k+1)}\right)^{-1}= \left(\sum_{k=0}^l \epsilon^{-\frac{1}{2^k}}\right)^{-1}.
	$$
	Since $\epsilon^{-\frac{1}{2^k}}\succ 1$ the series does not converge to an element of $\Bbb K^+$ and since it is bounded above by $ \epsilon^{-2} $ it does not converges to $\infty$. Thus, the graph has divergent capacity.
\end{example}

\begin{theorem}\label{Thm:convFiniteRec}
If for some  $x,y\in V$ and some exhaustion $(K_{j})$, we have  $P_{K_j}^n(x,y)\to0  $, $ n\to\infty $ for all $ j $,
 then the graph does not have null capacity.
\end{theorem}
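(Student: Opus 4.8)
The plan is to show directly that, along the given exhaustion $(K_j)$, the quantity $G_j(x_0,y_0):=\Delta_{K_j}^{-1}1_{y_0}(x_0)$ stays bounded by a fixed element of $\K$ as $j\to\infty$, where $x_0,y_0$ are the vertices from the hypothesis. Once this is established, Theorem~\ref{Thm::typeDelta}(1) closes the argument immediately: the graph has null capacity if and only if $G_n(x,y)\to\infty$ for some (equivalently all) $x,y\in V$, so a bounded $G_j(x_0,y_0)$ forces the graph not to be of null capacity. For the finitely many $j$ with $x_0\notin K_j$ or $y_0\notin K_j$ the matrix element $P^n_{K_j}(x_0,y_0)$ vanishes identically, so nothing has to be said there, and I may restrict attention to $j$ large enough that $x_0,y_0\in K_j$.

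First I would combine the hypothesis $P^n_{K_j}(x_0,y_0)\to 0$ with Theorem~\ref{Lemma::convPn} (applied to $K=K_j$ and $\phi=1_{y_0}$): this yields, via ``a student's dream'' (Proposition~\ref{prop:studentsdream}), convergence of $\sum_{n\ge 0}P^n_{K_j}(x_0,y_0)$ together with the identity
\[
G_j(x_0,y_0)=\Delta_{K_j}^{-1}1_{y_0}(x_0)=\sum_{n=0}^\infty P^n_{K_j}(x_0,y_0).
\]
Next I would record the crude substochastic bound: if $h\in C(V)$ denotes the constant function $h\equiv 1$, then $P_{K_j}h\preceq h$ because the transition probabilities are nonnegative and sum to $1$, and since $P_{K_j}$ maps nonnegative functions to nonnegative functions, iterating gives $0\preceq P^n_{K_j}1_{y_0}\preceq P^n_{K_j}h\preceq h$, hence $0\preceq P^n_{K_j}(x_0,y_0)\preceq 1$ for every $n$. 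Therefore each partial sum obeys $\sum_{n=0}^{N}P^n_{K_j}(x_0,y_0)\preceq N+1\prec\mathcal N$ for any fixed infinitely large $\mathcal N\in\K^+$, and letting $N\to\infty$ yields $G_j(x_0,y_0)\preceq\mathcal N$. Since the same $\mathcal N$ works for every $j$, we get $\lim_{j\to\infty}G_j(x_0,y_0)\neq\infty$, and Theorem~\ref{Thm::typeDelta}(1) finishes the proof.

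The only real subtlety is psychological rather than technical: over $\mathbb R$ one is tempted to control the escape/return probabilities of the truncated walks uniformly in $j$, whereas over a non-Archimedean field the trivial bound $P^n_{K_j}(x_0,y_0)\preceq 1$ is already sufficient, because a partial sum of $N+1$ ones is still infinitely smaller than an infinitely large element $\mathcal N$, and this $\mathcal N$ is the same for all $j$. Two minor points to dispatch in passing: the reduction uses only the single pair $(x_0,y_0)$, which avoids having to relocate the base vertex inside possibly disconnected sets $K_j$; and the $K_j$ need not be connected, but Theorem~\ref{Thm::typeDelta} (and hence the characterization of null capacity we invoke) is stated for arbitrary exhaustions, so this causes no difficulty.
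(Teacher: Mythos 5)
Your proof is correct and follows essentially the same route as the paper's: the identity $G_j(x_0,y_0)=\sum_{n\ge 0}P^n_{K_j}(x_0,y_0)$ from Theorem~\ref{Lemma::convPn}, the trivial bound $P^n_{K_j}(x_0,y_0)\preceq 1$ forcing $G_j(x_0,y_0)\preceq\mathcal N$ uniformly in $j$ for an infinitely large $\mathcal N$, and the Green function characterization of null capacity in Theorem~\ref{Thm::typeDelta}. The additional remarks about disconnected $K_j$ and small $j$ are harmless but not needed.
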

\begin{proof}
Observe that $  P^n_{K_{j}}(x,y)\preceq 1 $.
We have by Theorem~\ref{Lemma::convPn} and for any infinitely large element $ \mathcal{N} \in \K$ by assumption
$$
G_{j}(x,y)=\Delta^{-1}_{K_{j}}(x,y)=\sum_{n=0}^\infty P^n_{K_{j}}(x,y)\prec \mathcal{N}  .
$$	
Hence, $ G_{j}(x,y) $ does not converge to infinity and therefore the graph does not have null capacity 	by Theorem~\ref{Thm::typeDelta}.
\end{proof}

\begin{theorem}[Sufficient condition for  $P^n(x,y)\not\to 0$ ]
	If the graph contains a vertex $x_0\in V$ such that $P^{k}(x_{0},x_{0})\succeq c$  for some $c\in \mathbb{Q}$ and some $ k\ge 2 $, then $ P^{n}(x,y)\not\to 0 $ as $ n\to\infty $ for all $x,y\in V $.
\end{theorem}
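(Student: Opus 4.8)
The plan is to reduce the claim to the single pair $(x_{0},x_{0})$ and then exploit the rigidity of standard rationals in a non-Archimedean field. By part (b) of Lemma~\ref{Lemma::P(x,y)NoV} it suffices to show that $P^{n}(x_{0},x_{0})\not\to 0$ as $n\to\infty$; the statement for all $x,y\in V$ then follows at once. (Here $c$ is of course a \emph{positive} rational.)

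First I would iterate the hypothesis along the progression $(nk)_{n\in\mathbb N}$. Since all transition probabilities lie in $\K^{+}\cup\{0\}$, every term of the matrix product $P^{(n+1)k}=P^{nk}P^{k}$ is non-negative, so keeping only the $y=x_{0}$ summand gives
$$
P^{(n+1)k}(x_{0},x_{0})\;\succeq\;P^{nk}(x_{0},x_{0})\,P^{k}(x_{0},x_{0})\;\succeq\;c\,P^{nk}(x_{0},x_{0}).
$$
Together with the base case $P^{k}(x_{0},x_{0})\succeq c$, induction on $n$ yields $P^{nk}(x_{0},x_{0})\succeq c^{n}\succ 0$ for every $n\in\mathbb N$.

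Next I would record the elementary but decisive non-Archimedean fact that a sequence $(r_{n})$ in $\mathbb Q\setminus\{0\}$ cannot converge to $0$ in $\K$: otherwise, choosing any positive infinitesimal $\tau\in\K^{+}$, we would obtain an index $n$ with $0\prec|r_{n}|\preceq\tau$; writing $|r_{n}|=a/b$ with $a,b\in\mathbb N$, this forces $a/b\prec 1/m$ for all $m\in\mathbb N$, contradicting $a/b\succeq 1/b$. Applying this to $(c^{n})_{n}$, which is a sequence of non-zero rationals, gives $c^{n}\not\to 0$.

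Finally I would put the pieces together. If we had $P^{m}(x_{0},x_{0})\to 0$ as $m\to\infty$, the subsequence $\bigl(P^{nk}(x_{0},x_{0})\bigr)_{n}$ would also converge to $0$, and then $0\prec c^{n}\preceq P^{nk}(x_{0},x_{0})$ would force $c^{n}\to 0$, contradicting the previous step. Hence $P^{n}(x_{0},x_{0})\not\to 0$, and Lemma~\ref{Lemma::P(x,y)NoV}(b) upgrades this to $P^{n}(x,y)\not\to 0$ for all $x,y\in V$. The only genuinely subtle point is the non-Archimedean rigidity of rationals used in the third step --- and it is precisely this phenomenon that makes the statement fail over $\mathbb R$, where a fixed positive return probability is perfectly compatible with $P^{n}\to 0$.
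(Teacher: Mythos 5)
Your proof is correct and follows essentially the same route as the paper's: the supermultiplicative bound $P^{nk}(x_{0},x_{0})\succeq c^{n}$, the observation that a positive rational power $c^{n}$ dominates every infinitesimal, and the reduction to all pairs via Lemma~\ref{Lemma::P(x,y)NoV}(b). You merely spell out the induction and the non-Archimedean rigidity of the rationals, which the paper leaves implicit (and, like the paper, you correctly note that $c$ must be taken positive for the statement to be meaningful).
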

\begin{proof}
By assumption there is  rational number $ c \succ 0$ such that for any $ n $
	\begin{align*}
		P^{kn}(x_{0},x_{0}) \succeq c^{n}\succ \tau
	\end{align*}
for an infinitesimal $\tau \in \Bbb K$. Hence,  $ P^{kn}(x_{0},x_{0})\not\to 0 $ and by Lemma~\ref{Lemma::P(x,y)NoV} this holds for all vertices.
\end{proof}

We finish our considerations with giving an example of a  positive capacity graph for which $P^n(x,y)\not\to 0$.
\begin{example}[Positive capacity but $P^n(x,y)\not\to 0$]\label{ex4}
Again we consider a path graph $V=\mathbb N_0$, see Figure~\ref{Fig1}, over the Levi-Civita field $\R$. Let
$b(0,1)=b(1,2)=1$ and $b(k,k+1)=\epsilon^{-k+1}$ for $k>2$. 
Then by Theorem~\ref{thm::weaksphersym} the graph has positive capacity. On the other hand, the graph satisfies the condition of the lemma above since we have
$
P^2(0,0)= P(0,1)P(1,0)={1}/2.
$
Therefore, $P^n(x,y)\not\to 0$.
\end{example}

\end{document}